\def\thm@space@setup{%
 \thm@preskip=\parskip \thm@postskip=0pt
}
\def\th@remark{%
  \thm@headfont{\itshape}%
  \normalfont 
  \thm@preskip\parskip \thm@postskip=0pt
}
\newtheorem{Theorem}{Theorem}[section]
\newtheorem{Def}[Theorem]{Definition}
\newtheorem{Lem}[Theorem]{Lemma}
\newtheorem{Prop}[Theorem]{Proposition}
\newtheorem{Cor}[Theorem]{Corollary}
\newtheorem{Rem}[Theorem]{Remark}
\DeclareMathOperator{\Tr}{\mathrm{Tr}}
\newcommand{\C}{\mathbb{C}}
\newcommand{\R}{\mathbb{R}}
\newcommand{\Hsp}{\mathcal{H}}
\newcommand{\Gg}{\mathbb{G}}
\newcommand{\Xx}{\mathbb{X}}
\newcommand{\Ww}{\mathcal{W}}
\newcommand{\Vv}{\mathcal{V}}
\newcommand{\mWw}{\mathscr{W}}
\newcommand{\mfsl}{\mathfrak{sl}}
\newcommand{\opp}{\mathrm{op}}
\newcommand{\cop}{\mathrm{cop}}
\newcommand{\Ad}{\mathrm{Ad}}
\newcommand{\id}{\mathrm{id}}
\newcommand{\vNtimes}{\bar{\otimes}}
\newcommand{\acnabla}{\nabla\!\!\!{^\shortmid}\,\,}
\title{$I$-factorial quantum torsors}
\author{Kenny De Commer\thanks{Vakgroep wiskunde, Vrije Universiteit Brussel (VUB), B-1050 Brussels, Belgium, email: {\tt kenny.de.commer@vub.ac.be}} \thanks{Partially supported by the FWO grant G.0251.15N and the grant H2020-MSCA-RISE-2015-691246-QUANTUM DYNAMICS}}
\date{}
\begin{document}
\maketitle
\date

\begin{abstract}\noindent In an earlier paper of the author, locally compact quantum torsors were defined for locally compact quantum groups, putting into the analytic framework the theory of Galois objects for Hopf algebras. Such quantum torsors allow to deform the given quantum group, providing a generalization of the $2$-cocycle twisting procedure. It was also shown that a quantum torsor can be constructed from an action of the dual quantum group on a type $I$-factor. In this paper, we study quantum torsors which are themselves type $I$-factors. These $I$-factorial quantum torsors turn out to have a nice duality theory. We illustrate the general theory with the example of the Heisenberg double.
\end{abstract}

\section*{Introduction}

The theory of \emph{locally compact quantum groups} \cite{KV00,KV03} provides a vast generalization of the classical theory of locally compact groups. Being steeped in the theory of von Neumann algebras and Tomita-Takesaki theory, it is the proper setting in which to study quantum symmetries such as arising for example from subfactor theory \cite{EnN96,Eno98,Vae01}. One of the main attributes of the theory is the existence of a \emph{generalized Pontryagin duality theory}, allowing for a uniform treatment of many classical group theoretical results and constructions. 

By now, many construction methods for locally compact quantum groups are known. We mention for example
\begin{itemize}
\item the bicrossed product construction \cite{Maj91,VVa03},
\item the double crossed product construction \cite{WoP90,BV05},
\item $2$-cocycle twisting \cite{EV96,DeC11}.
\end{itemize}

Apart from this, many specific examples have been studied, especially within the context of \emph{compact} quantum groups \cite{Wor98}, such as \emph{$q$-deformations of semi-simple compact Lie groups} \cite{LS91} and \emph{universal quantum groups} \cite{VDW96}. Within the non-compact setting, we mention among others the quantizations of low-dimensional Lie groups \cite{Wor01,Wor91,KK03,VVa02} and the interesting developments in \cite{ByT03,Ip15} concerning quantizations of split real Lie groups (which have however not been formally put yet within the framework of locally compact quantum groups).

In this article, we will be concerned with the generalization of the $2$-cocycle twisting technique as defined in \cite{DeC11}. More precisely, in that article a specific class of quantum group actions  on von Neumann algebras was studied, consisting of those actions which are in a sense ``free, transitive and proper''. Such actions will be called \emph{quantum torsors}. It was shown how from such a quantum torsor can be constructed a new locally compact quantum group by a twisting procedure.

The theory in \cite{DeC11} was only partially developed, up to the point where the applications to the theory of $2$-cocycle twisting could be presented. The full story was presented in the Ph.D.~ thesis of the author, parts of which however remained unpublished. In the meantime, Enock generalized these results to the broader setting of \emph{measured quantum groupoids} \cite{Eno12}. Since it will be important to have these results available in the technically easier setting of locally compact quantum groups, we will spend some time introducing the relevant terminology and fundamental statements.      However, our proofs are devised as to arrive at our end result in the quickest way possible by using results already explicitly stated in the literature. Although a direct, first principles approach would certainly be possible, we do not feel this would be worth the effort, as many of the needed techniques are well-known already and would offer nothing interesting or new. 

It was also shown in \cite{DeC11} how a quantum torsor for a quantum group can be constructed from an action of its dual on a type $I$-factor, called \emph{projective representation}. Moreover, this turns out to produce a $1$-to-$1$-correspondence between quantum torsors for a quantum group and projective representations of its dual (up to the appropriate notion of equivalence). 

The \emph{main object} of this paper will be the study of quantum torsors which are \emph{at the same time} projective representations. These quantum torsors will be called \emph{$I$-factorial quantum torsors}. This provides for a particularly nice situation, as it turns out that the associated projective representation of the dual is again a $I$-factorial quantum torsor, thus allowing one to twist both the original quantum group as well as its dual. This situation is not as uncommon as it may seem on first sight, and an interesting class of examples will be presented in the article \cite{DeC16}, for which this paper sets the stage. Here, however, we will content ourselves with the simpler example of the \emph{Heisenberg double}. 

The precise contents of this paper are as follows. 

In the \emph{first section}, we recall some basic facts from the theory of locally compact quantum groups in the von Neumann algebraic setting. In the \emph{second section}, we introduce the notion of \emph{Galois object}, and recall some of the main results of \cite{DeC11}. In the \emph{third section}, we define \emph{co-linking quantum groupoid von Neumann algebras}, and prove that they provide very special examples of \emph{measured quantum groupoids} \cite{Les07,Eno08}. We then show, following partly \cite{Eno12}, how any Galois object can be completed, in an essentially unique way, into a co-linking quantum groupoid von Neumann algebra. In the \emph{fourth} section we consider the \emph{reduction} of a Galois object with respect to a (dual) quantum subgroup, providing for concrete examples a useful technique to break down a complicated Galois object into simpler constituents. These results were proven in the Ph.D. thesis of the author, but have not been published before. In the  \emph{fifth section} we introduce \emph{$I$-factorial Galois objects}, and prove our main theorem, Theorem \ref{TheoDualMain}.  Finally, in the \emph{sixth} section we illustrate the general theory with the \emph{Heisenberg double} example. 

\emph{Notation}

We denote by $\otimes_{\C}$ the tensor product of vector spaces, by $\otimes$ the tensor product of Hilbert spaces, and by $\vNtimes$ the spatial tensor product of von Neumann algebras. We further denote by \[\Sigma: V\otimes_{\C} W\rightarrow W\otimes_{\C} V,\quad v\otimes w \mapsto w\otimes v\] the flip map for vector spaces as well as Hilbert spaces, while for algebras $A$,$B$ we will rather use the notation \[\varsigma: A\otimes_{\C} B\rightarrow B\otimes_{\C} A,\quad a\otimes b\mapsto b\otimes a,\]and similarly for tensor products of von Neumann algebras. 

For $\Hsp$ a Hilbert space we denote 
\[
\omega_{\xi,\eta}(x) = \langle \xi,x\eta\rangle,\qquad x\in B(\Hsp),\xi,\eta\in \Hsp.
\] 
We also make frequent use of the leg numbering notation as is common in quantum group theory. For example, if $X\in B(\Hsp^{\otimes 2})$, then 
\[
X_{13} = (\id\otimes \varsigma)(X\otimes 1) = (1\otimes \Sigma)(X\otimes 1)(1\otimes \Sigma) \in B(\Hsp^{\otimes 3}).
\] 

When $M$ is a von Neumann algebra, we denote by $M^+$ its positive cone, by $M_*$ its pre-dual, by $M_*^+$ the positive cone of its predual, and by $\mathcal{S}_*(M)$ its space of normal states. For $\omega \in M_*$ we denote 
\[
\overline{\omega}(x) = \overline{\omega(x^*)}.
\] 
We denote by $L^2(M) = (L^2(M),\pi_M,J_M,\mathfrak{P})$ a \emph{standard form} of $M$ \cite[Chapter IX, Definition 1.13]{Tak2}, where $\pi_M$ is the standard representation of $M$ on $L^2(M)$, where $J_M = J_M^* = J_M^{-1}$ is the \emph{modular conjugation} and where $\mathfrak{P}$ is the positive cone. In practice we will suppress the notation $\pi_M$ and view $M\subseteq B(L^2(M))$. 

We also use standard notation for weight theory: if $\varphi$ is a normal, semi-finite, faithful (nsf) weight on a von Neumann algebra $M$, we denote 
\[
\mathscr{N}_{\varphi} = \{x\in M\mid \varphi(x^*x)<\infty\},\quad \mathscr{M}_{\varphi} = \mathscr{N}_{\varphi}^*\mathscr{N}_{\varphi},\quad \mathscr{M}_{\varphi}^+ = \mathscr{M}_{\varphi}\cap M^+,
\] 
and we write $\sigma_t^{\varphi}$ for the modular automorphism group. We denote by
\[
\Lambda_{\varphi}: \mathscr{N}_{\varphi} \rightarrow L^2(M)
\]
the canonical \emph{GNS-map}, and by $\nabla_{\varphi}$ the modular operator, so that 
\[
\Lambda_{\varphi} \circ \sigma_t^{\varphi} = \nabla_{\varphi}^{it}\circ \Lambda_{\varphi}.
\]

\section{Quantum group von Neumann algebras}

\begin{Def}\cite[Definition 1.1]{KV03} A \emph{quantum group von Neumann algebra} consists of a von Neumann algebra $M$ together with a coassociative unital normal $*$-homomorphism \[\Delta: M \rightarrow M\vNtimes M\]  for which there exist normal, semi-finite, faithful (nsf) weights \[\varphi,\psi: M^+ \rightarrow [0,+\infty]\] such that for all $x\in M^+$ and all $\omega\in \mathcal{S}_*(M)$ \[\varphi((\omega\otimes \id)\Delta(x)) = \varphi(x),\qquad \psi((\id\otimes \omega)\Delta(x)) = \psi(x).\] These properties are called respectively \emph{left-invariance} and \emph{right-invariance}.
\end{Def} 

\begin{Rem} 
\begin{enumerate}
\item In \cite[Definition 1.1]{KV03}, these objects are called rather `von Neumann algebraic quantum groups'. We prefer the above terminology as it refers more directly to `an algebra associated to the quantum group' instead of vice versa. Also the terminology Hopf-von Neumann algebra is sometimes used for the above structure.
\item As shown in \cite{KV03}, one can associate to $(M,\Delta)$ a unique (reduced) C$^*$-algebra with an appropriate comultiplication and weights, forming a \emph{quantum group C$^*$-algebra} (adapting terminology of \cite{KV00}). This C$^*$-algebraic structure then completely remembers the original quantum group von Neumann algebra. This justifies the interpretation of a quantum group von Neumann algebra $(M,\Delta)$ as being associated to a \emph{locally compact quantum group}. Since the use of the latter terminology will not be particularly helpful in this paper and can sometimes cause confusion, we will refrain from using it explicitly, having however used it in the introduction for the sake of intuition.
\item In \cite{KV03}, the main results are proven by copying techniques from the C$^*$-algebra setting \cite{KV00}. For a more direct von Neumann algebra oriented approach, see \cite{VD14}. The latter does not contain any new results with respect to \cite{KV00,KV03}, but contains some simplifications of and new approaches to the original results, and is sometimes more suitable as a direct reference. 
\item The invariance condition is stated in a slightly stronger form than needed, see \cite[Proposition 3.1]{KV03}.
\end{enumerate}
\end{Rem} 

The nsf weights $\varphi,\psi$ are unique up to scalars \cite[Theorem 2.5]{VD14}, and we usually suppose them fixed from the outset. We then denote 
\[
M = (M,\Delta) = (M,\Delta,\varphi,\psi)
\] 
as a shorthand. 

Let $(M,\Delta)$ be a general quantum group von Neumann algebra. We write the modular conjugation $J  = J_M$ and further use the shorthand notation
\[
\Lambda = \Lambda_{\varphi},\quad \Gamma = \Lambda_{\psi}, \quad \sigma_t = \sigma^{\varphi}_t,\qquad  \sigma'_t = \sigma^{\psi}_t,\quad \nabla^{it} = \nabla_{\varphi}^{it},\quad \acnabla^{it} = \nabla_{\psi}^{it}.
\]
\begin{Rem}
Note that the map $\Gamma$ used in \cite{KV00,KV03} is not quite the same as the one   above, since in our notation $\Lambda_{\psi}$ denotes the standard GNS-map, while the map $\Gamma$ in \cite{KV00,KV03} is constructed more explicitly. The difference is however only by a unimodular scalar - we will come back to this in a moment. 
\end{Rem} 

Associated to $(M,\Delta)$ we have the \emph{unitary left and right regular corepresentations} \[W \in M\vNtimes B(L^2(M)),\qquad V\in B(L^2(M))\vNtimes M,\] uniquely determined by the fact that for all $\omega \in B(L^2(M))_*$ one has 
\[(\omega \otimes \id)(W^*) \Lambda(x) = \Lambda((\omega\otimes \id)\Delta(x)),\qquad x\in \mathscr{N}_{\varphi},\]
\[(\id \otimes \omega)(V) \Gamma(x) = \Gamma((\id\otimes \omega)\Delta(x)),\qquad x\in \mathscr{N}_{\psi}.\] They are multiplicative unitaries in the sense that $W$ and $V$ are unitaries satisfying \[W_{12}W_{13}W_{23} = W_{23}W_{12},\qquad V_{12}V_{13}V_{23} = V_{23}V_{12},\] see \cite[Theorem 1.2]{KV03}. There then exists a unique $\sigma$-weakly continuous one-parameter group $(\tau_t)_{t\in\R}$ of normal $*$-automorphisms of $M$ and a unique involutive anti-automorphism $R$ of $M$ such that, writing $\tau_z$ for the analytic extension of $\tau_t$ to $z\in \C$ and \[S: \mathscr{D}(\tau_{-i/2}) \subseteq M \rightarrow M,\quad x \mapsto R(\tau_{-i/2}(x)),\] one has $(\id\otimes \omega)(W)$ and $(\omega\otimes \id)(V)$ in $\mathscr{D}(\tau_{-i/2})$ for all $\omega \in B(L^2(M))_*$ and \[S((\id\otimes \omega)(W)) = (\id\otimes \omega)(W^*),\qquad S((\omega\otimes \id)(V)) = (\omega\otimes \id)(V^*),\] see \cite[Proposition 1.24]{VD14}. One has, see e.g. \cite[Theorem 2.3 and Theorem 2.10]{VD14}, \begin{equation}\label{EqTau}\Delta \circ\tau_t = (\tau_t\otimes \tau_t)\circ \Delta = (\sigma_t\otimes \sigma_{-t}')\circ \Delta\end{equation} and \begin{equation}\label{EqR} \Delta \circ R = (R\otimes R)\circ \Delta^{\opp},\end{equation} where $\Delta^{\opp}(x) = \varsigma \Delta(x)$. One calls $(\tau_t)_{t\in \R},R,S$ respectively the \emph{scaling group}, \emph{unitary antipode} and \emph{antipode} of $(M,\Delta)$. 

By the above anti-comultiplicativity of $R$, it is usual to normalize $\psi$ in function of $\varphi$ by putting \[\psi = \varphi\circ R,\] and we assume this in what follows. It can then be shown that there exists a unique scalar $\nu>0$ and unique invertible operator $\delta>0$ affiliated with $M$ such that the Connes cocycle derivative of $\psi$ with respect to $\varphi$ satisfies \[(D\psi : D\varphi)_t = \nu^{it^2/2}\delta^{it},\] see the proof of \cite[Theorem 2.11]{VD14}. One calls $\nu$ the \emph{scaling constant} and $\delta$ the \emph{modular element} of $(M,\Delta)$. They are independent of the normalisation of $\varphi$. One has that the $\delta^{it}$ are grouplike, \[\Delta(\delta^{it}) = \delta^{it}\otimes \delta^{it},\] and satisfy \[\sigma_t(\delta^{is}) = \nu^{ist}\delta^{is}.\]

\begin{Rem} It follows from the last commutation relation that $\Gamma$ is determined explicitly by \[\Gamma(x) = \nu^{-i/4}\Lambda(x\delta^{1/2}),\] where $x$ ranges over those $x\in M$ such that the composition $x\delta^{1/2}$ closes to a bounded operator inside $\mathscr{N}_{\varphi}$. Hence our $\Gamma$ differs from the $\Gamma$ in \cite{KV00,KV03} by the scalar $\nu^{-i/4}$. 
\end{Rem} 

Associated to $(M,\Delta)$ are a variety of related quantum group von Neumann algebras.

First of all, we can flip the coproduct, \[\Delta^{\opp} = \varsigma \circ \Delta,\] and we will write this as $M^{\cop}= (M,\Delta^{\opp})$. We endow this with the respective left and right invariant nsf weights $\psi$ and $\varphi$, so that the associated multiplicative unitaries become \[W^{\cop} = \Sigma V^*\Sigma,\qquad V^{\cop} = \Sigma W^*\Sigma,\] with $\Sigma$ the flip map on $L^2(M)\otimes L^2(M)$. 

On the other hand, we can also flip the product, which by Tomita theory can be realized by taking the commutant, \[M^{\prime}  =  JMJ.\] We can endow $M^{\prime}$ with the coproduct \[\Delta^{\prime}(x) = (J\otimes J)\Delta(JxJ)(J\otimes J),\] so that invariant weights are given by \[\varphi^{\prime}(x) = \varphi(Jx^*J),\qquad \psi^{\prime}(x) = \psi(Jx^*J).\] Choosing as the GNS-maps \[\Lambda^{\prime}(x) = J\Lambda(JxJ),\qquad \Gamma^{\prime}(x) = J\Gamma(JxJ),\] we have the associated multiplicative unitaries \[W^{\prime} = (J\otimes J)W(J\otimes J),\qquad V^{\prime} = (J\otimes J)V(J\otimes J).\] There is an isomorphism of quantum group von Neumann algebras \begin{equation}\label{EqIsoOpCop}\theta: M^{\cop} \rightarrow M^{\prime},\quad x \mapsto JR(x)^*J.\end{equation}

We now treat the duality theory for quantum group von Neumann algebras.

\begin{Theorem}\cite[Definition 1.5]{KV03} The $\sigma$-weak closure of \[\{(\omega\otimes \id)(W)\mid \omega \in M_*\}\] is a von Neumann algebra, and defines a quantum group von Neumann algebra by means of the comultiplication \[\hat{\Delta}(x) = \Sigma W(x\otimes 1)W^*\Sigma.\]
\end{Theorem}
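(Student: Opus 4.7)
The plan is to exploit the pentagon equation $W_{12}W_{13}W_{23} = W_{23}W_{12}$ together with the manageability of $W$ (encoded by the scaling group $(\tau_t)$ and the modular data of $\varphi$), following the general blueprint for multiplicative unitaries.

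First I would establish that
\[
\hat M := \overline{\{(\omega\otimes\id)(W) : \omega\in M_*\}}^{\sigma\text{-weak}}
\]
is closed under multiplication. The corepresentation identity $(\Delta\otimes\id)(W) = W_{13}W_{23}$, an immediate consequence of $\Delta(x) = W^*(1\otimes x)W$ together with the pentagon, yields after slicing legs $1,2$
\[
(\omega_1\otimes\id)(W)\,(\omega_2\otimes\id)(W) = \bigl((\omega_1\otimes\omega_2)\Delta\otimes\id\bigr)(W),
\]
which exhibits the product as another slice since $(\omega_1\otimes\omega_2)\Delta \in M_*$. For $*$-closure I would invoke that $W$ is a manageable multiplicative unitary in the sense of Woronowicz: the managing operator is built from $J_M$ and appropriate powers of $\nabla$, and $(\omega\otimes\id)(W)^*$ then admits a $\sigma$-weakly convergent description as a slice $(\omega'\otimes\id)(W)$ with $\omega'$ obtained from $\bar\omega$ by conjugation with analytic continuations of $\nabla^{it}$. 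Closure under the $\sigma$-weak topology then upgrades $\hat M$ to a von Neumann algebra.

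For the comultiplication, $\hat\Delta(x) = \Sigma W(x\otimes 1)W^*\Sigma$ is automatically a normal unital $*$-homomorphism from $B(L^2(M))$ into $B(L^2(M)\otimes L^2(M))$. The non-trivial point is that it lands in $\hat M\vNtimes \hat M$: for $x = (\omega\otimes\id)(W)$ one expands $W_{23}(1\otimes x\otimes 1)W_{23}^*$ and rewrites, via the pentagon, as a double slice expression in the last two legs of $W$, confirming membership in $\hat M\vNtimes \hat M$ after the flip $\Sigma$. Coassociativity of $\hat\Delta$ is a direct transcription of the pentagon equation for $W$.

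The main obstacle is the construction of the invariant nsf weights on $(\hat M,\hat\Delta)$. Here I would follow the Kustermans--Vaes route: one defines a partial GNS map $\hat\Lambda$ on a dense subspace of $\hat M$, roughly by $\hat\Lambda\bigl((\omega_{\xi,\Lambda(a)}\otimes\id)(W)\bigr) = \xi$ for $a\in\mathscr{N}_\varphi$, and then sets $\hat\varphi(x^*x) = \|\hat\Lambda(x)\|^2$. Showing this is unambiguous, extends to an nsf weight, and is left-invariant under $\hat\Delta$ is where the bulk of the technical weight-theoretic work of \cite{KV03} sits; for the present exposition I would simply cite that detailed construction, together with the parallel right-invariant construction via $V$, rather than reproduce the arguments here.
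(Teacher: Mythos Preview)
Your outline is essentially correct and follows the standard Kustermans--Vaes argument. Note, however, that the paper does not prove this theorem at all: it is quoted as a known result from \cite{KV03}, so there is no ``paper's own proof'' to compare against beyond the citation itself.

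A couple of minor remarks on your sketch. The multiplicativity computation is fine once one observes $(\Delta\otimes\id)(W)=W_{13}W_{23}$; your justification via $\Delta(x)=W^*(1\otimes x)W$ and the pentagon is exactly the usual one. For the $*$-closure you invoke manageability in the Woronowicz sense; in the von Neumann algebraic treatment (e.g.\ \cite{VD14}) one rather uses directly the density of analytic vectors for $\sigma^\varphi$ and the relation $(\hat J\otimes J)W(\hat J\otimes J)=W^*$, but the content is the same. Your heuristic formula for $\hat\Lambda$ is not quite the one in the paper (there one sets $\hat\Lambda(\lambda(\omega))=\xi_\omega$ where $\xi_\omega$ is determined by $\omega(y^*)=\langle\Lambda(y),\xi_\omega\rangle$ for $y\in\mathscr N_\varphi$), but since you explicitly flag this as ``roughly'' and defer the weight construction to the cited reference, no harm is done.
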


\begin{Def} We define $M^{\wedge} = \hat{M}$ to be the quantum group von Neumann algebra given by the von Neumann algebra \[M^{\wedge} =\hat{M} = \{(\omega\otimes \id)(W)\mid \omega \in M_*\}''\] with the above coproduct \[\Delta^{\wedge}(x) = \hat{\Delta}(x) = \Sigma W(x\otimes 1) W^* \Sigma.\] We call $(\hat{M},\hat{\Delta})$ the \emph{Pontryagin dual} of $M= (M,\Delta)$. 
\end{Def} 

In the following, we will denote \[\lambda(\omega) = (\omega\otimes \id)(W) \in \hat{M},\qquad \omega \in M_*.\]
Let $\mathscr{I}$ be the set of $\omega \in M_*$ for which there exists a vector $\xi_{\omega}\in L^2(M)$ such that \[\omega(y^*) =\langle \Lambda(y),\xi_{\omega}\rangle,\qquad \forall y \in \mathscr{N}_{\varphi}.\] The vector $\xi_{\omega}$ is uniquely determined, and one can then show that the $\sigma$-strong-norm closure $\hat{\Lambda}$ of \[\lambda(\mathscr{I}) \subseteq \hat{M}\rightarrow L^2(M),\quad \lambda(\omega) \mapsto \xi_{\omega}\] is the GNS-map of a uniquely determined left invariant nsf weight $\hat{\varphi}$ for $M^{\wedge}$, see e.g.~ \cite[Theorem 3.13]{VD14}. We can hence identify in this way $L^2(\hat{M}) = L^2(M)$. 

We adorn the associated modular and scaling data of $\hat{M}$ with $\,\hat{\;}\,$. Then by \cite[Proposition 2.15]{KV03} the multiplicative unitaries of $\hat{M}$ are given by \begin{equation}\label{EqFormDual}\hat{W} = \Sigma W^*\Sigma = (\hat{J}\otimes \hat{J})V(\hat{J}\otimes \hat{J}),\qquad \hat{V} = W^{\prime} =  (J\otimes J)W(J\otimes J).\end{equation} We also note, see e.g.~ \cite[Proposition 2.1]{KV03}, that the unitary antipode $R$ of the original quantum group von Neumann algebra $(M,\Delta)$ is implemented by $\hat{J}$, \begin{equation}\label{EqRhatJ} R(x) = \hat{J}x^*\hat{J},\qquad x\in M.\end{equation} The scaling constant $\hat{\nu}$ equals $\nu^{-1}$, and pops up in the commutation relation between $J$ and $\hat{J}$, \[J\hat{J} = \nu^{-i/4} \hat{J}J.\] It follows that we can make a self-adjoint unitary \[\Theta = \nu^{i/8} J\hat{J},\] so that $\Theta$ provides a spatial implementation of the isomorphism $\theta$ in \eqref{EqIsoOpCop}.

We have the following commutation relations between the multiplicative unitaries and the modular conjugations, see \cite[Corollary 2.2]{KV03}: \[(\hat{J}\otimes J)W(\hat{J}\otimes J) = W^*,\qquad (J\otimes \hat{J})V(J\otimes \hat{J}) = V^*.\]

By \eqref{EqFormDual}, it follows that \[M^{\wedge\,\prime} = \{(\id\otimes \omega)(V) \mid \omega \in M_*\}''.\] We will denote \[\rho(\omega) = (\id\otimes \omega)(V),\qquad \omega \in M_*.\]

\begin{Theorem}\cite[Theorem 3.18]{VD14} The following \emph{Pontryagin biduality} holds:  \[M^{\wedge\wedge} = M.\] 
\end{Theorem}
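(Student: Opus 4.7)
The plan is to compute $M^{\wedge\wedge}$ explicitly from the definition of the dual construction and verify it coincides with $(M,\Delta)$ as a quantum group von Neumann algebra. By construction, $M^{\wedge\wedge}$ is the von Neumann algebra generated by $\{(\omega\otimes \id)(\hat W):\omega\in\hat M_*\}$ equipped with the comultiplication $x\mapsto \Sigma\hat W(x\otimes 1)\hat W^*\Sigma$. Substituting $\hat W=\Sigma W^*\Sigma$ from \eqref{EqFormDual} shows that $(\omega\otimes\id)(\hat W)=(\id\otimes\omega)(W^*)$ and that the bidual coproduct simplifies to $x\mapsto W^*(1\otimes x)W$. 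Thus everything reduces to two claims: first, $M=\{(\id\otimes\omega)(W):\omega\in\hat M_*\}''$ as von Neumann algebras; and second, $\Delta(x)=W^*(1\otimes x)W$ for every $x\in M$.

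The first claim is immediate in one direction since $W\in M\vNtimes\hat M$. The reverse inclusion, asserting that right-leg slices of $W$ by normal functionals on $\hat M$ are $\sigma$-weakly dense in $M$, is the heart of Pontryagin duality and does not follow by formal manipulation alone. The standard argument uses manageability of $W$ together with the pentagon equation $W_{12}W_{13}W_{23}=W_{23}W_{12}$ to transport generation statements between the two legs, showing that the commutant of the right-leg slices equals $M'$. An equivalent route is to apply Baaj–Skandalis style arguments to the multiplicative unitary $\hat W$ of $\hat M$, exploiting that $\hat M$ is itself a quantum group von Neumann algebra whose own dual construction can be run in parallel.

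The second claim, $\Delta(x)=W^*(1\otimes x)W$, is a familiar identity in the Kustermans–Vaes framework. It can be deduced from the defining intertwining relation $(\omega\otimes\id)(W^*)\Lambda(x)=\Lambda((\omega\otimes\id)\Delta(x))$ by a direct test on vectors of the form $\Lambda(y)\otimes\xi$, or alternatively from $(\Delta\otimes\id)(W)=W_{13}W_{23}$ (which is itself a reformulation of the pentagon equation): check the identity first on generators $x=(\id\otimes\omega)(W)$ with $\omega\in\hat M_*$, and then extend to all of $M$ by normality of $\Delta$ combined with the first claim.

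The main obstacle is the density statement in the first claim; the matching of the coproducts and the derivation of the identity $\Delta(x)=W^*(1\otimes x)W$ are then direct computations with \eqref{EqFormDual} and the intertwining property of $W$. In practice a concise write-up would simply invoke \cite[Theorem 3.18]{VD14} for the nontrivial von Neumann algebraic identification and assemble the comultiplication comparison from the formulas already recorded earlier in this section.
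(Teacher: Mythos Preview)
The paper does not prove this theorem at all: it is stated with the citation \cite[Theorem 3.18]{VD14} and no proof environment follows. So there is no argument in the paper to compare your proposal against; the paper simply imports the result from the literature.

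Your sketch is mathematically sound. The reduction via $\hat W=\Sigma W^*\Sigma$ to the two claims $M=\{(\id\otimes\omega)(W):\omega\in\hat M_*\}''$ and $\Delta(x)=W^*(1\otimes x)W$ is correct, and you correctly identify the density statement as the only nontrivial ingredient. Your own final sentence---that a concise write-up would simply invoke \cite[Theorem 3.18]{VD14}---is exactly what the paper does, so in that sense your conclusion and the paper's treatment coincide. The additional discussion you give (manageability, pentagon equation, the intertwining relation for $W$) is accurate background for how the cited reference proceeds, but it goes beyond anything the present paper attempts.
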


We can write here an actual equality since by construction the above von Neumann algebras are all concretely implemented on the same Hilbert space $L^2(M)$. 

We will in the following mostly need the commutant $(M^{\wedge\prime},\Delta^{\wedge\prime}) = (\hat{M}',\hat{\Delta}')$ of the dual. Also the operation $M \mapsto M^{\wedge\prime}$ is involutive, but only after twisting with $\Theta$, \[\Theta M^{\wedge\prime\wedge\prime}\Theta = M,\qquad (\Theta\otimes \Theta)\Delta^{\wedge\prime\wedge\prime}(x)(\Theta \otimes \Theta) = \Delta(\Theta x \Theta).\] We also note for further reference that, following the above constructions, the left and right multiplicative unitaries of $M^{\wedge\prime\wedge\prime}$ are given by \[W_{\Theta} = (\Theta\otimes \Theta)W(\Theta\otimes \Theta),\qquad V_{\Theta} = (\Theta\otimes \Theta)V(\Theta\otimes \Theta),\] in agreement with the above isomorphism. 

Finally, we will also need the notion of \emph{quantum group von Neumann subalgebra}.

\begin{Def}\cite[Definition 2.9]{VVa02} We call a quantum group von Neumann algebra $(M_1,\Delta_1)$ a \emph{quantum group von Neumann subalgebra} of $(M,\Delta)$ if we are given a unital, normal inclusion of von Neumann algebras $M_1\subseteq M$ such that $\Delta_1 = \Delta_{\mid M}$. 
\end{Def} 

We recall the following theorem.

\begin{Theorem}\cite[Proposition 10.5]{BV05}\label{TheoSubvn} Let $(M,\Delta)$ be a quantum group von Neumann algebra. A von Neumann subalgebra $M_1\subseteq M$ is a quantum group von Neumann subalgebra (with respect to the restriction of $\Delta$) if and only if \[\Delta(M_1) \subseteq M_1\vNtimes M_1,\qquad R(M_1) = M_1,\quad \tau_t(M_1) = M_1,\quad \forall t\in \R.\]
\end{Theorem}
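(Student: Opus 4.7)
For the forward implication, I would argue by uniqueness of the canonical antipode data. If $(M_1,\Delta|_{M_1})$ is a quantum group von Neumann algebra in its own right, it possesses its own unitary antipode $R_1$ and scaling group $\tau^{(1)}_t$, canonically determined by $\Delta|_{M_1}$ through the characterizing identities of \cite[Proposition 1.24]{VD14}, applied to slices of the left multiplicative unitary $W_1\in M_1\vNtimes B(L^2(M_1))$ of $M_1$. Since $\Delta_1$ is literally the restriction of $\Delta$, one can realize $W_1$ as a compression of $W$ along an isometry $L^2(M_1)\hookrightarrow L^2(M)$ coming from a standard form of $M_1$; the same characterizing identities on $M$ then force $R_1=R|_{M_1}$ and $\tau^{(1)}_t=\tau_t|_{M_1}$. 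In particular $R(M_1)=M_1$ and $\tau_t(M_1)=M_1$, while $\Delta(M_1)\subseteq M_1\vNtimes M_1$ holds by definition of quantum group von Neumann subalgebra.

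For the converse, the algebraic data comes for free from restriction: $\Delta|_{M_1}:M_1\to M_1\vNtimes M_1$ is normal, unital and coassociative; writing $\tau^{(1)}_t=\tau_t|_{M_1}$ and $R_1=R|_{M_1}$, the identities \eqref{EqTau} and \eqref{EqR} restrict cleanly to $M_1\vNtimes M_1$ thanks to the three hypotheses. What remains, and is the entire substance of the theorem, is the construction of left- and right-invariant nsf weights on $M_1$.

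The main obstacle is precisely this weight-existence problem, since $\varphi|_{M_1}$ need not be semifinite on $M_1$ and so one cannot simply restrict the Haar weights. My plan is to follow the Baaj--Vaes strategy: exploit the hypothesis $\tau_t(M_1)=M_1$ together with the compatibility of the scaling group with the modular automorphism groups encoded in \eqref{EqTau} in order to place the inclusion $M_1\subseteq M$ in the setting of Takesaki's theorem on operator-valued weights. This yields an nsf operator-valued weight $T:M^+\to\widehat{M_1^+}$ satisfying $(T\otimes\id)\circ\Delta=\Delta\circ T$ on the domain of $T$. Any nsf weight $\varphi_1$ on $M_1$ with $\varphi=\varphi_1\circ T$ then inherits left invariance from $\varphi$ via this compatibility, and a right-invariant nsf weight on $M_1$ is obtained as $\varphi_1\circ R|_{M_1}$ using $R(M_1)=M_1$ together with \eqref{EqR}. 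The technically hardest step---constructing $T$ with the correct compatibility with $\Delta$, which boils down to a structural theorem about subgroups of locally compact quantum groups---is the one I would outsource to \cite[Proposition 10.5]{BV05} rather than reprove from scratch.
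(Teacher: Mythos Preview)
The paper does not supply its own proof of this theorem: it simply records the statement and cites \cite[Proposition 10.5]{BV05}. So there is nothing in the paper to compare your argument against beyond the bare citation.

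Your outline is a faithful sketch of the Baaj--Vaes argument. The forward implication is indeed straightforward from uniqueness of the antipode data, and for the converse you have correctly identified that the only substantive issue is semifiniteness of an invariant weight on $M_1$, and that the Baaj--Vaes solution passes through Takesaki's theorem on the existence of nsf operator-valued weights compatible with the modular automorphism groups. Since you ultimately defer the hardest step back to \cite[Proposition 10.5]{BV05}, your proposal and the paper's treatment coincide: both simply invoke Baaj--Vaes. If you wanted your write-up to stand on its own, you would need to actually carry out the construction of the operator-valued weight $T$ (checking the Takesaki hypothesis via the invariance $\tau_t(M_1)=M_1$ and the relation between $\tau$, $\sigma^{\varphi}$, $\sigma^{\psi}$) and verify the left invariance of $\varphi_1$ from $(T\otimes\id)\circ\Delta=\Delta\circ T$; as written this is only indicated, not proved.
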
 

\section{Coactions and Galois objects}

Let us fix a quantum group von Neumann algebra $(M,\Delta)$. 

\begin{Def}
We call \emph{right coaction} of $(M,\Delta)$ on a von Neumann algebra $N$ any unital normal $*$-homomorphism \[\alpha: N\rightarrow N\vNtimes M\] such that \[(\id\otimes \Delta)\alpha= (\alpha\otimes \id)\alpha.\] 
\end{Def} 

Similarly, one defines left coactions $\gamma:N\rightarrow M\vNtimes N$. Any right coaction $(N,\alpha)$ of $(M,\Delta)$ then determines a left coaction $(N,\alpha^{\opp})$ of $(M,\Delta^{\opp})$ by \[\alpha^{\opp}: N \rightarrow M\vNtimes N,\quad x \mapsto \varsigma \alpha(x),\] allowing to transfer statements concerning right coactions to corresponding ones for left coactions. 

We will in the following be mainly concerned with right coactions. 

\begin{Def} The \emph{subalgebra of coinvariants} for a right coaction $(N,\alpha)$ is the von Neumann subalgebra  \[N^{\alpha} = \{x\in N\mid \alpha(x) = x\otimes 1\}\subseteq N.\] We call $\gamma$ \emph{ergodic} if $N^{\alpha}= \C$.
\end{Def}

\begin{Def} The  \emph{crossed product} von Neumann algebra for a right coaction $(N,\alpha)$ is the von Neumann algebra \[N\rtimes M = N\rtimes_{\alpha} M = \{(1\otimes x)\alpha(y)\mid x\in \hat{M}',y\in N\}'' \subseteq N\vNtimes B(L^2(M)).\] 
\end{Def}

One has on $N\rtimes M$ a \emph{dual right coaction} $\hat{\alpha}$ of $\hat{M}'$, determined by \begin{equation}\label{EqDualCoaction}\hat{\alpha}(z) = W_{\Theta,23}(z\otimes 1)W_{\Theta,23}^*.\end{equation} This definition entails for $x\in N$ and $y\in \hat{M}'$ 
\[\hat{\alpha}(\alpha(x)) = 1\otimes \alpha(x),\qquad \hat{\alpha}(1\otimes y) = 1\otimes \hat{\Delta}'(y).\] 
 
Let us recall the following \emph{biduality result}, cf. \cite[Theorem 2.6]{Vae01} for the left handed version.

\begin{Theorem}\label{TheoBiduCoac}  Considering \[(N\rtimes_{\alpha} M)\rtimes_{\hat{\alpha}} \hat{M}' \subseteq N\vNtimes B(L^2(M)\otimes L^2(M)),\] one has an isomorphism of von Neumann algebras \begin{equation}\label{EqDefChiRtimes}\chi_{\rtimes}: N \vNtimes B(L^2(M)) \cong 
(N\rtimes M)\rtimes \hat{M}',\qquad x \mapsto V^*_{23}x_{12}V_{23}.\end{equation} In particular, one has \[\alpha(x) \mapsto \alpha(x)\otimes 1,\quad 1\otimes y \mapsto 1\otimes \hat{\Delta}'(y),\quad 1\otimes z \mapsto 1\otimes 1 \otimes z\] for $x\in N,y\in \hat{M}'$ and $z \in M^{\wedge\prime\wedge\prime}$. 

Moreover, this isomorphism satisfies the equivariance condition 
\begin{equation}\label{EqEqBidual}
(\chi_{\rtimes}^{-1}\otimes \Ad(\Theta))(\alpha^{\wedge\wedge}(x)) = \Sigma_{23}W_{23}(\alpha\otimes \id)(\chi_{\rtimes}^{-1}(x))W_{23}^*\Sigma_{23}
\end{equation}
for $x\in (N\rtimes M) \rtimes \hat{M'}$.
\end{Theorem}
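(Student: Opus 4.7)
The plan is to exploit that $\chi_{\rtimes}$ is manifestly implemented by conjugation with the unitary $V_{23}$, hence is automatically a normal injective $*$-homomorphism of $N\vNtimes B(L^2(M))$ into the three-fold von Neumann algebra $N\vNtimes B(L^2(M))\vNtimes B(L^2(M))$. The content of the theorem is then threefold: the three transformation identities on the distinguished generators $\alpha(x)$, $1\otimes y$, $1\otimes z$; the identification of the image of $\chi_{\rtimes}$ with $(N\rtimes M)\rtimes \hat{M}'$ by a generating-set comparison; and the equivariance relation \eqref{EqEqBidual}.

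I would handle the three identities first, each as a direct calculation built on the pentagon $V_{12}V_{13}V_{23}=V_{23}V_{12}$. For $\alpha(x)$ with $x\in N$, I would combine the fact that $V$ implements $\Delta$ on $M$ in legs $2,3$ with the coaction identity $(\alpha\otimes \id)\alpha = (\id\otimes \Delta)\alpha$; the two operations telescope to produce the stated image. For $y = \rho(\omega) = (\id\otimes \omega)(V) \in \hat{M}'$, slicing the pentagon in the last leg yields $V_{23}^{*}V_{12}V_{23} = V_{12}V_{13}$, which, once combined with the formula for $\hat{\Delta}'$ via $J$-conjugation of $\hat{\Delta}$ and the relation $\hat V = W'$, gives $V_{23}^{*}(1\otimes y)_{12}V_{23} = 1\otimes \hat{\Delta}'(y)$. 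For $z\in M^{\wedge\prime\wedge\prime}$, the formula $V_{23}^{*}(1\otimes z)_{12}V_{23} = 1\otimes 1\otimes z$ follows from the intertwining properties of $V_\Theta = (\Theta\otimes \Theta)V(\Theta\otimes \Theta)$ relative to $M^{\wedge\prime\wedge\prime}$, essentially because slices of $V_\Theta$ in the first leg exhaust $(M^{\wedge\prime\wedge\prime})^{\wedge\prime}$.

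With these three identities in hand, the image of $\chi_{\rtimes}$ contains each of the three target families. It then equals $(N\rtimes M)\rtimes \hat{M}'$ provided the three target families generate it (immediate from the definition of the iterated crossed product) and the three source families generate $N\vNtimes B(L^2(M))$. For the latter I would use the density of $\alpha(N)(1\otimes M)$ in $N\vNtimes M$ together with the joint generation $\hat{M}'\vee M^{\wedge\prime\wedge\prime} = B(L^2(M))$, itself a consequence of $\Theta M^{\wedge\prime\wedge\prime}\Theta = M$ combined with the standard fact $M\vee \hat{M}' = B(L^2(M))$. The equivariance relation \eqref{EqEqBidual} is then a direct substitution of the definitions of $\chi_{\rtimes}$ and of the dual-dual coaction $\alpha^{\wedge\wedge}$ (which is implemented via $W$ in appropriate legs), using the twist relation $W_\Theta = (\Theta\otimes \Theta)W(\Theta\otimes \Theta)$ to convert the $W$-conjugations into $W_\Theta$-conjugations as needed.

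The main obstacle I anticipate is purely organizational: the algebras $M$, $\hat M$, $\hat{M}'$, and $M^{\wedge\prime\wedge\prime}$ each come with their own multiplicative unitary and pentagon identity, and one must juggle legs and commutant/opposite conventions consistently throughout. A cleaner alternative is to deduce the statement from the left-handed biduality theorem \cite[Theorem 2.6]{Vae01} by passing to $\alpha^{\opp}$ for the quantum group $(M,\Delta^{\opp})$; but since the conversion itself requires essentially the same bookkeeping, I would favour the direct pentagon calculation.
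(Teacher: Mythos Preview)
The paper does not give its own proof of this theorem: it is stated as a recalled result, with the remark ``cf.\ \cite[Theorem 2.6]{Vae01} for the left handed version.'' So the paper's ``proof'' is precisely the alternative you mention at the end --- translate from the left-handed biduality in Vaes --- and nothing more. Your direct pentagon approach is thus more detailed than what the paper offers; it is essentially the content of Vaes's argument rewritten in the right-handed conventions, and is a perfectly sound way to proceed.

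Two small corrections to your sketch. First, the identity $\chi_{\rtimes}(\alpha(x)) = \alpha(x)\otimes 1$ is simpler than you indicate: since $\alpha(x)\in N\vNtimes M$ and $V\in \hat{M}'\vNtimes M$, the element $\alpha(x)_{12}$ and $V_{23}$ meet only in leg $2$, where $M$ commutes with $\hat{M}'$; no telescoping via the coaction identity is needed. Second, rearranging the pentagon $V_{12}V_{13}V_{23}=V_{23}V_{12}$ gives $V_{23}V_{12}V_{23}^{*}=V_{12}V_{13}$, not $V_{23}^{*}V_{12}V_{23}=V_{12}V_{13}$ as you wrote; you will need to be a bit careful here about which rearrangement to slice in order to land on $\hat{\Delta}'$ rather than its opposite. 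Neither point affects the viability of the overall strategy.
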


\begin{Def} A right coaction $(N,\alpha)$ is called \emph{integrable} if the set \[\{x \in N^+\mid \exists y\in N^+,\forall \omega \in M_*^+, \varphi((\omega\otimes \id)\alpha(x)) = \omega(y)\}\] has $\sigma$-weakly dense linear span in $N$.
\end{Def}

If $x$ lies in the above set, the element $y$ is uniquely determined and lies in $N^{\alpha}$. We write \[y = (\id\otimes \varphi)\alpha(x)\in N^{\alpha}.\] For example, any dual coaction $\hat{\alpha}$ is integrable. See again \cite[Section 2]{Vae01}.

Choosing a fixed state $\omega$ on $N^{\alpha}$, we can then define an nsf weight \begin{equation}\label{EqVarphiN}\varphi_N(x) = \omega((\id\otimes \varphi)\alpha(x)),\qquad x\in N^+\end{equation} on $N$, with associated GNS-map $\Lambda_N$. We have in this case the following expression for the \emph{standard implementing unitary} for $\alpha$.

\begin{Lem}\cite[Proposition 2.4]{Vae01} Let $\alpha$ be an integrable right coaction. There exists a unique unitary $U \in B(L^2(N))\vNtimes M$ such that \[(\id\otimes \omega_{\eta,\xi})(U)\Lambda_N(x) = \Lambda_N((\id\otimes \omega_{\eta,\delta^{-1/2}\xi})\alpha(x))\] for all $\eta \in L^2(M)$, $\xi\in \mathscr{D}(\delta^{-1/2})$ and $x\in \mathscr{N}_{\varphi_N}$. This $U$ satisfies \[(\id\otimes \Delta)(U) = U_{12}U_{13}.\] 
\end{Lem}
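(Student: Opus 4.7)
The plan is to derive this right-handed statement from the left-handed version \cite[Proposition 2.4]{Vae01} by passing to the opposite quantum group $(M, \Delta^{\opp})$ via the standard translation $\alpha \leftrightarrow \alpha^{\opp}$.

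First, I consider the left coaction $\gamma = \alpha^{\opp} = \varsigma \circ \alpha : N \to M \vNtimes N$ of $(M, \Delta^{\opp})$. The two coactions share the same coinvariant subalgebra, so the state $\omega$ chosen to build $\varphi_N$ yields the same weight on $N$ and the same GNS-map $\Lambda_N$. Integrability of $\alpha$ with respect to the left-invariant weight $\varphi$ of $(M,\Delta)$ translates to integrability of $\gamma$ with respect to the right-invariant weight of $(M,\Delta^{\opp})$, which is again $\varphi$, so Vaes's hypotheses are met. Crucially, the modular element of $(M, \Delta^{\opp})$ is $\delta^{-1}$: its left and right invariant weights are $\psi$ and $\varphi$, and so $(D\psi : D\varphi)_t = \nu^{it^2/2}\delta^{it}$ gets replaced by $(D\varphi : D\psi)_t = \nu^{-it^2/2}\delta^{-it}$.

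Applying \cite[Proposition 2.4]{Vae01} to $(N, \gamma)$ then produces a unique unitary $\widetilde{U} \in M \vNtimes B(L^2(N))$ whose slices in the first leg implement $\gamma$ on $\Lambda_N$—the twist now involving $(\delta^{-1})^{1/2} = \delta^{-1/2}$—and which satisfies the corepresentation identity $(\Delta^{\opp} \otimes \id)(\widetilde{U}) = \widetilde{U}_{23} \widetilde{U}_{13}$. I then set $U = \varsigma(\widetilde{U}) \in B(L^2(N)) \vNtimes M$. Flipping legs converts the slice formula into the required
\[(\id \otimes \omega_{\eta,\xi})(U)\Lambda_N(x) = \Lambda_N((\id \otimes \omega_{\eta,\, \delta^{-1/2}\xi})\alpha(x)),\]
and the relation $\Delta = \varsigma \circ \Delta^{\opp}$ converts Vaes's opposite pentagon equation into $(\id \otimes \Delta)(U) = U_{12} U_{13}$.

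Uniqueness of $U$ is immediate from the density of $\Lambda_N(\mathscr{N}_{\varphi_N})$ in $L^2(N)$, the density of $\mathscr{D}(\delta^{-1/2})$ in $L^2(M)$, and the fact that vector functionals $\omega_{\eta,\zeta}$ separate elements of $M \subseteq B(L^2(M))$. The main point requiring care is the bookkeeping of the modular element—verifying that the combined effect of replacing $\Delta$ by $\Delta^{\opp}$ and $\delta$ by $\delta^{-1}$ reproduces precisely the $\delta^{-1/2}$-twist on the second leg—and confirming that the nontrivial isometry estimate in Vaes's proof, which rests on integrability together with the KMS condition for $\varphi$, transfers unchanged through this translation.
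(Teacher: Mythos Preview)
The paper does not supply its own proof of this lemma; it is stated as a direct citation of \cite[Proposition~2.4]{Vae01}, with the understanding that Vaes's left-coaction result is to be read in the right-handed conventions of the present paper. Your plan---passing to the left coaction $\gamma = \varsigma\circ\alpha$ of $(M,\Delta^{\opp})$, invoking Vaes's proposition there, and flipping back---is exactly the standard way to make this translation explicit, and it is correct. Your observations that the right-invariant weight of $(M,\Delta^{\opp})$ is $\varphi$ (so that Vaes's integrability hypothesis is met with the same integral $(\id\otimes\varphi)\alpha$), that the modular element of $(M,\Delta^{\opp})$ is $\delta^{-1}$, and that applying $\varsigma$ converts $(\Delta^{\opp}\otimes\id)(\widetilde U)=\widetilde U_{23}\widetilde U_{13}$ into $(\id\otimes\Delta)(U)=U_{12}U_{13}$ are all accurate. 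The only real work, as you note, is tracking precisely how the $\delta^{\pm 1/2}$ twist sits inside the vector functional under Vaes's conventions; once that bookkeeping is checked against his formula, the argument is complete.
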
 
We then write 
\[
\rho_N(\omega) = (\id\otimes \omega)(U).
\]

In the case of an integrable coaction, we can also present $N\rtimes M$ on $L^2(N)$.

\begin{Theorem}\label{TheoIntImp}\cite[Theorem 5.3]{Vae01} If $(N,\alpha)$ is an integrable right coaction of $(M,\Delta)$, there exists a unique normal unital $*$-homomorphism 
\begin{equation}\label{EqRepNcrossM}
\pi_{\rtimes}: N\rtimes M \rightarrow B(L^2(N))
\end{equation}
such that 
\[
\alpha(x)(1\otimes \rho(\omega)) \mapsto x\rho_N(\omega)
\] for $x\in N$ and $\omega\in M_*$.  Moreover, the range of this map equals $J_N (N^{\alpha})' J_N$. 
\end{Theorem}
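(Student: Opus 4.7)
The theorem is essentially Theorem 5.3 of \cite{Vae01}, and the proof is organised around the standard implementing unitary $U\in B(L^2(N))\vNtimes M$ of the preceding lemma. Beyond the corepresentation identity $(\id\otimes\Delta)(U)=U_{12}U_{13}$ recorded there, a crucial second property to be extracted from the GNS-level definition of $U$ is the implementation formula
\[
\alpha(x)\;=\;U(x\otimes 1)U^*,\qquad x\in N,
\]
inside $B(L^2(N)\otimes L^2(M))$. This I would establish by slicing the defining identity with $\omega_{\eta,\xi}$ (for $\xi$ in the domain of $\delta^{-1/2}$), applying it twice, and matching both sides as operators on vectors $\Lambda_N(y)$ via the relative modular theory of $\varphi_N$ and the left-invariance of $\varphi$.

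Given these two properties, $\pi_{\rtimes}$ is defined on generators by $\alpha(x)\mapsto x$ and $1\otimes\rho(\omega)\mapsto\rho_N(\omega)$. Uniqueness of a normal $*$-homomorphism with this prescription is immediate from the fact that $\alpha(N)$ and $\{1\otimes\rho(\omega)\mid\omega\in M_*\}$ generate $N\rtimes M$ as a von Neumann algebra. For well-definedness and the homomorphism property, the idea is that conjugation by $U$ takes $N\rtimes M\subseteq N\vNtimes B(L^2(M))$ into $B(L^2(N))\otimes 1$: the implementation property gives $U^*\alpha(x)U=x\otimes 1$, while the corepresentation identity for $U$ combined with the standard multiplicative-unitary relations linking $U_{12}$ and $V_{23}$ (arising from $V_{12}V_{13}V_{23}=V_{23}V_{12}$) yields $U^*(1\otimes\rho(\omega))U=\rho_N(\omega)\otimes 1$ for each $\omega\in M_*$, hence $U^*(1\otimes y)U\in B(L^2(N))\otimes 1$ for every $y\in\hat{M}'$ by $\sigma$-weak closure. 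Thus $U^*(N\rtimes M)U\subseteq B(L^2(N))\otimes 1$, and $\pi_{\rtimes}$ is the resulting corner map, automatically a normal unital $*$-homomorphism.

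For the range, the standard form identification $N'\cap B(L^2(N))=J_NNJ_N$ reduces the problem to determining those $z\in J_NNJ_N$ that commute additionally with every $\rho_N(\omega)$. Writing $z=J_NnJ_N$, commutation with all slices of $U$ amounts to $(z\otimes 1)U=U(z\otimes 1)$, which by the implementation property translates to $\alpha(n)=n\otimes 1$, i.e.\ $n\in N^{\alpha}$. Hence $\pi_{\rtimes}(N\rtimes M)'=J_NN^{\alpha}J_N$, and taking commutants yields the asserted $\pi_{\rtimes}(N\rtimes M)=J_N(N^{\alpha})'J_N$. The main technical obstacle I anticipate is the verification of the implementation property $\alpha(x)=U(x\otimes 1)U^*$ directly from the GNS-level definition: the $\delta^{-1/2}$ twist in the defining identity forces one to juggle unbounded operators together with the modular theory of $\varphi_N$, and that is where most of the effort lies. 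Once that point is settled, the rest of the argument is a fairly routine chase through the multiplicative-unitary identities.
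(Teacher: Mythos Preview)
The paper itself offers no proof of this statement --- it is quoted verbatim from \cite[Theorem~5.3]{Vae01} and used as a black box. So there is nothing to compare against in the present paper; your outline can only be measured against Vaes's original argument, and there it runs into a genuine error.

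The claimed identity $U^*(1\otimes\rho(\omega))U=\rho_N(\omega)\otimes 1$ is false. From $(\id\otimes\Delta)(U)=U_{12}U_{13}$ together with $\Delta(m)=V(m\otimes 1)V^*$ one gets $V_{23}U_{12}V_{23}^*=U_{12}U_{13}$, hence $U_{12}^*V_{23}U_{12}=U_{13}V_{23}$. Slicing the third leg with $\omega\in M_*$ yields
\[
U^*(1\otimes\rho(\omega))U=(\id\otimes\id\otimes\omega)(U_{13}V_{23}),
\]
and the right-hand side has a nontrivial second leg in $\hat{M}'$ coming from the first leg of $V$. Therefore $U^*(N\rtimes M)U$ is \emph{not} contained in $B(L^2(N))\otimes 1$, and $\pi_\rtimes$ cannot be read off as the corner map you describe.

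What Vaes actually does is to establish first, independently of any conjugation, that $\rho(\omega)\mapsto\rho_N(\omega)$ extends to a normal $*$-representation $\hat{\pi}'$ of $\hat{M}'$ on $L^2(N)$; this is the general correspondence between unitary corepresentations $U$ of $(M,\Delta)$ and representations of $\hat{M}'$ satisfying $(\hat{\pi}'\otimes\id)(V)=U$. The implementation formula $\alpha(x)=U(x\otimes 1)U^*$ then says precisely that $(\id_N,\hat{\pi}')$ is a covariant pair, and $\pi_\rtimes$ arises from the covariant-representation description of the crossed product rather than from a single unitary conjugation. Your commutant computation for the range is correct in outline, but the passage from $(z\otimes 1)U=U(z\otimes 1)$ with $z=J_NnJ_N$ to $\alpha(n)=n\otimes 1$ tacitly uses the further modular identity $(J_N\otimes\hat{J})U(J_N\otimes\hat{J})=U^*$ from \cite{Vae01}, which you should state explicitly.
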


In particular, we will write \begin{equation}\label{EqRephatM} \hat{\pi}': \hat{M}'\rightarrow B(L^2(N)),\qquad 1\otimes x \mapsto \hat{\pi}'(x) = \pi_{\rtimes}(1\otimes x),\qquad x\in \hat{M}'.\end{equation}

We will be interested in right coactions $(N,\alpha_N)$ which are both ergodic and integrable. In this case, the nsf weight $\varphi_N$ in \eqref{EqVarphiN} is uniquely determined (choosing on $\C$ the canonical functional $\omega(z) = z$), and we will call it (at the moment purely by analogy) the \emph{left invariant nsf weight} on $N$. It is easily seen that there then exists a unique coisometry \[\Ww: L^2(N)\otimes L^2(M)\rightarrow L^2(N)\otimes L^2(N),\] called the \emph{Galois map}, such that for all $\omega \in B(L^{2}(N))_*$ \[(\omega\otimes \id)(\mathcal{W}^*) \Lambda_{N}(x) = \Lambda_{N}((\omega\otimes \id)\alpha_N(x)),\quad x\in \mathscr{N}_{\varphi_{N}}.\]  Note that in \cite{DeC11} the notation $\widetilde{G} = \Sigma \mathcal{W}^*\Sigma$ was used.

\begin{Def} We say that a coaction $(N,\alpha_N)$ is a \emph{right Galois object} if $\alpha$ is ergodic and integrable with \emph{unitary} Galois map $\Ww$.\end{Def}

Interpreting $M$ as a quantum group function von Neumann algebra $L^{\infty}(\Gg)$, one can envision $N$ as a function von Neumann algebra $L^{\infty}(\Xx)$, with $\Gg$ acting on the locally compact space $\Xx$ in a free, transitive and proper way (and with some quasi-invariant measure on $\Xx$ fixed). In other words, $\Xx$ can be seen as a ``\emph{locally compact quantum torsor}'' for $\Gg$. See also \cite{Sch04} for an overview of this concept within the purely algebraic setting. 

For $(N,\alpha)$ a Galois object, it was shown in \cite[Section 4]{DeC11} that \begin{equation}\label{EqWInN}\Ww \in N\vNtimes B(L^2(M),L^2(N)),\end{equation} that $\Ww$ satisfies the \emph{hybrid pentagon equation} \begin{equation}\label{EqHybPent} \Ww_{12}\Ww_{13}W_{23} = \Ww_{23}\Ww_{12},\end{equation} and that $\Ww$ implements the coaction $\alpha$ in the sense that \begin{equation}\label{EqImplGalAl}\alpha(x) = \Ww^*(1\otimes x)\Ww,\qquad x\in N.\end{equation} In particular, we find \begin{equation}\label{ActAlphN} (\alpha\otimes \id)\Ww= \Ww_{13}W_{23}.\end{equation} We also recall from \cite[Lemma 4.2]{DeC11} that, with $\hat{\pi}'$ as in \eqref{EqRephatM},  \begin{equation}\label{EqGalImpCross2} \Ww(1\otimes x) = (1\otimes \hat{\pi}'(x))\Ww,\qquad x\in \hat{M}'.\end{equation}

In practice, one can sometimes avoid working directly with the Galois unitary. Indeed, from \cite[Theorem 3.1]{DeC11} one sees that an integrable ergodic coaction $(N,\alpha)$ defines a Galois object if and only if the representation $\pi_{\rtimes}:N\rtimes M \rightarrow B(L^2(N))$ in Theorem \ref{TheoIntImp} is faithful (also known as \emph{saturatedness} of the coaction) and hence an isomorphism. As a type $I$-factor is simple as a von Neumann algebra, we obtain the following corollary.

\begin{Cor}\label{CorTypI} A right coaction $(N,\alpha)$ is a Galois object if and only if $\alpha$ is ergodic and integrable with $N\rtimes M$ a type $I$-factor, in which case we have a natural identification \begin{equation}\label{EqDefPiRtimes}\pi_{\rtimes}: N\rtimes M \cong B(L^2(N)).\end{equation}
\end{Cor}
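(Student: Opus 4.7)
The plan is to invoke the criterion from \cite[Theorem 3.1]{DeC11} recalled in the paragraph preceding the corollary: among integrable ergodic right coactions $(N,\alpha)$, being a Galois object is equivalent to $\pi_{\rtimes}: N \rtimes M \to B(L^2(N))$ being faithful, equivalently an isomorphism (onto its range, which by Theorem \ref{TheoIntImp} is always $J_N(N^{\alpha})'J_N$). Given this, the corollary is essentially a statement about factors: an isomorphism onto $B(L^2(N))$ forces $N\rtimes M$ to be a type $I$ factor, and, conversely, a surjection from a factor is automatically faithful.

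For the forward direction, suppose $(N,\alpha)$ is a Galois object. By definition it is ergodic and integrable, and by the cited criterion $\pi_{\rtimes}$ is an isomorphism $N\rtimes M \xrightarrow{\sim} B(L^2(N))$; in particular $N\rtimes M$ is a type $I$ factor and the identification \eqref{EqDefPiRtimes} is just $\pi_{\rtimes}$ itself.

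For the converse, suppose $\alpha$ is ergodic and integrable with $N\rtimes M$ a type $I$ factor. Ergodicity gives $N^{\alpha}=\C$, so by Theorem \ref{TheoIntImp} the range of $\pi_{\rtimes}$ is $J_N\C' J_N = B(L^2(N))$; hence $\pi_{\rtimes}$ is surjective. Its kernel is a $\sigma$-weakly closed two-sided ideal of the factor $N\rtimes M$, and since such a factor has trivial center its only $\sigma$-weakly closed two-sided ideals are $\{0\}$ and the whole algebra. Surjectivity onto the nonzero algebra $B(L^2(N))$ excludes the latter, so $\pi_{\rtimes}$ is faithful, hence an isomorphism. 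The cited criterion then yields that $(N,\alpha)$ is a Galois object.

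The whole argument is essentially book-keeping; the only substantive input beyond the quoted criterion from \cite{DeC11} is the standard fact that a von Neumann algebra factor has no nontrivial $\sigma$-weakly closed two-sided ideals, so there is no real obstacle.
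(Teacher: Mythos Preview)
Your proof is correct and follows exactly the approach the paper takes: invoke the criterion from \cite[Theorem 3.1]{DeC11} that, for an integrable ergodic coaction, being Galois is equivalent to $\pi_{\rtimes}$ being faithful (hence an isomorphism onto $B(L^2(N))$, since ergodicity makes the range $J_N(N^\alpha)'J_N = B(L^2(N))$), and then observe that a factor has no nontrivial $\sigma$-weakly closed two-sided ideals, so the surjection $\pi_{\rtimes}$ from a factor is automatically faithful. The paper compresses this into the single remark ``As a type $I$-factor is simple as a von Neumann algebra,'' which is precisely your final observation.
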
 

Note that by \eqref{EqImplGalAl} and \eqref{EqGalImpCross2} the inverse of the above isomorphism is implemented by the Galois unitary, \begin{equation}\label{EqGalCrossIso} \pi_{\rtimes }^{-1}: B(L^2(N))\cong N\rtimes M,\quad x\mapsto \Ww^*(1\otimes x)\Ww.\end{equation}

If $(N,\alpha)$ is a right coaction, an nsf weight $\psi_{N}$ on $N$ is called \emph{right invariant} if for all states $\omega \in M_*$ and all $x\in N^+$ one has \[\psi_{N}((\id \otimes \omega)\alpha(x)) = \psi_{N}(x).\] 

\begin{Theorem}\cite[Theorem 4.19]{DeC11} If $(N,\alpha)$ is a Galois object, there exists a right invariant nsf weight $\psi_{N}$ for $\alpha$, unique up to multiplication by a positive scalar. 
\end{Theorem}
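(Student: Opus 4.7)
The plan is to mimic, at the level of the Galois object, the construction $\psi = \varphi\circ R$ in the quantum group case, even though $(N,\alpha)$ possesses no intrinsic antipode. The weight $\psi_N$ will be obtained from $\varphi_N$ by twisting with a suitable ``modular element'' $\delta_N$ affiliated with $N$.

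I would first dispose of uniqueness via ergodicity. If $\psi_N$ and $\psi_N'$ are two nsf right-invariant weights on $N$, their Connes cocycle $u_t = (D\psi_N : D\psi_N')_t$ lies in $N$ and is a one-parameter family of unitaries. Applying the cocycle identity together with the right-invariance of both weights against a state $\omega\in M_*$ and a vector state pairing with the coaction $\alpha$ should force $\alpha(u_t) = u_t\otimes 1$; by ergodicity this gives $u_t = \lambda^{it}\in\C$, so $\psi_N' = \lambda\psi_N$. The only mild subtlety here is writing the cocycle identity in a form that actually sees the coaction, which is standard once one invokes Connes' Radon--Nikodym theorem applied inside $N$.

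For existence, the target is a positive self-adjoint operator $\delta_N$ affiliated with $N$ characterised by the two identities
\[
\alpha(\delta_N^{it}) = \delta_N^{it}\otimes \delta^{it},\qquad \sigma_t^{\varphi_N}(\delta_N^{is}) = \nu^{ist}\delta_N^{is},
\]
in direct analogy with the modular element of $(M,\Delta)$. To construct it I would exploit the Galois unitary $\Ww$: the hybrid pentagon equation \eqref{EqHybPent} together with the implementation \eqref{EqImplGalAl} produces commutation relations between $\Ww$ and the modular operator $\nabla_{\varphi_N}$ on $L^2(N)$ and between $\Ww$ and $\nabla_\varphi$ on $L^2(M)$. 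Comparing both sides of $(\alpha\otimes\id)\Ww = \Ww_{13} W_{23}$ after inserting modular operators, and using the corresponding commutation relations $\sigma_t\otimes\sigma_{-t}'$ from \eqref{EqTau}, should force the existence of a positive self-adjoint $\delta_N\,\eta\,N$ together with a scaling constant which, by comparison with the scaling constant $\nu$ of $M$, must coincide with $\nu$. Once $\delta_N$ is in hand, Connes' cocycle theorem produces an nsf weight $\psi_N$ on $N$ with $(D\psi_N : D\varphi_N)_t = \nu^{it^2/2}\delta_N^{it}$, and the first displayed identity for $\delta_N$ then translates directly into right-invariance of $\psi_N$ by a short computation parallel to the proof of right-invariance of $\psi$ in \cite[Theorem 2.11]{VD14}.

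The main obstacle is the construction of $\delta_N$ itself; everything else is a translation of the quantum group argument. The cleanest route is probably to bypass a direct construction and instead use Corollary \ref{CorTypI}: the isomorphism $\pi_\rtimes: N\rtimes M \cong B(L^2(N))$ transports the canonical trace on $B(L^2(N))$ back to $N\rtimes M$, and combined with the dual coaction $\hat\alpha$ of $\hat M'$ (which is automatically integrable) one can identify, via the standard Haagerup dual-weight machinery, a weight on $N$ whose dual weight is the trace. A bit of bookkeeping shows that this weight coincides with $\varphi_N(\delta_N^{1/2}\,\cdot\,\delta_N^{1/2})$ for a $\delta_N$ with the required properties, yielding both existence and the characterising identities simultaneously; uniqueness from the first paragraph then completes the proof.
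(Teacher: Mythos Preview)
The paper does not supply its own proof of this statement; it is quoted verbatim from \cite[Theorem 4.19]{DeC11}, with the uniqueness part attributed separately to \cite[Theorem 4.23]{DeC11}. There is therefore nothing in the present paper to compare your argument against directly.

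That said, the remarks immediately following the theorem make clear that the construction in \cite{DeC11} does proceed exactly along the lines you describe: one first produces a positive invertible $\delta_N\,\eta\,N$ satisfying $\alpha(\delta_N^{it}) = \delta_N^{it}\otimes\delta^{it}$ and $\sigma_t^{\varphi_N}(\delta_N^{is}) = \nu^{ist}\delta_N^{is}$ (these are equations \eqref{EqDelGrouplike} and \eqref{EqRelInv} in the paper, attributed to \cite[Proposition 4.16 and Lemma 4.18]{DeC11}), and then defines $\psi_N$ via $(D\psi_N:D\varphi_N)_t = \nu^{it^2/2}\delta_N^{it}$. So your overall strategy matches the cited source.

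Two points of caution. First, your uniqueness sketch is correct in spirit but the step ``right-invariance of both weights forces $\alpha(u_t)=u_t\otimes 1$'' is not immediate from the cocycle identity alone; the standard route is a balanced-weight argument on $M_2(N)$, and in \cite{DeC11} uniqueness is in fact a separate theorem (4.23) proved after existence. Second, your alternative route via transporting the trace on $B(L^2(N))$ back through $\pi_\rtimes$ and the dual-weight machinery is appealing, but the ``bit of bookkeeping'' needed to extract $\delta_N$ from this is nontrivial: the dual weight of $\varphi_N$ is $\Tr(\nabla_{\varphi_N}^{1/2}\,\cdot\,\nabla_{\varphi_N}^{1/2})$ (cf.\ \cite[Proposition 3.7]{DeC11}), not the trace, so one must still isolate the correct Radon--Nikodym factor. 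Neither of these is a fatal gap, but both are where the real work lies.
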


In general, there is no canonical normalisation available for $\psi_{N}$. Nevertheless, once $\psi_N$ has been chosen there exists by its construction in \cite[Theorem 4.19]{DeC11} and \cite[Lemma 4.18 and Theorem 4.23]{DeC11} a unique $\delta_N>0$ affiliated with $N$ such that \begin{equation}\label{EqConDerPhiPsi}(D\psi_{N} : D\varphi_{N})_t = \nu^{it^2/2} \delta_{N}^{it}\end{equation} with $\delta_{N} \eta N$ an invertible positive operator and $\nu$ the scaling constant of $M$. We further have by \cite[Lemma 4.18]{DeC11} that \begin{equation}\label{EqRelInv}\sigma_t^{\varphi_N}(\delta_N^{is}) = \nu^{ist}\delta_N^{is}\end{equation} and by \cite[Proposition 4.16]{DeC11} that  
\begin{equation}\label{EqDelGrouplike} \alpha(\delta_{N}^{it}) = \delta_{N}^{it}\otimes \delta^{it},\end{equation} with $\delta$ the modular element of $M$.

Evidently, one can develop also a theory of \emph{left} Galois objects $(N,\gamma)$, where $\gamma$ is an integrable ergodic left coaction with the Galois map $\Vv$ determined by \[(\omega \otimes \id)(\Vv)\Gamma_N(x) = \Gamma_N((\omega \otimes \id)\gamma(x)),\qquad \omega \in B(L^2(M))_*\] a unitary, where $\Gamma_N$ is the GNS-map with respect to the weights $\psi_N$ determined by $\psi_N(x) = (\psi\otimes \omega)\gamma(x)$ for all $x\in N^+$ and $\omega\in \mathcal{S}_*(N)$. All of the above results then have their left analogue. 

The main source of examples of Galois objects comes from the theory of \emph{projective corepresentations}.   

\begin{Def}\cite[Definition 7.1 and Theorem 7.2]{DeC11} Let $(M,\Delta)$ be a quantum group von Neumann algebra. A \emph{projective corepresentation} of $(M,\Delta)$ is a coaction $\alpha: N \rightarrow N\vNtimes M$ with $N$ a type $I$-factor. 
\end{Def}

For \[\alpha: N\rightarrow N\vNtimes M\] a projective corepresentation we can form 
\[N_{\alpha} = \alpha(N)' \cap (N\rtimes M),\] and we have a canonical isomorphism of von Neumann algebras \[N\rtimes M \cong N\vNtimes N_{\alpha},\] as $N$ is a type $I$ factor. It is easy to see that the dual right coaction $\hat{\alpha}$ of $\hat{M}'$ restricts to a coaction \begin{equation}\label{EqAlCirc}\alpha^{\circ}: N_{\alpha} \rightarrow N_{\alpha}\vNtimes \hat{M}'.\end{equation}

\begin{Theorem}\label{TheoRightGaloisDual} The couple $(N_{\alpha},\alpha^{\circ})$ is a right Galois object for $(\hat{M}',\hat{\Delta}')$.
\end{Theorem}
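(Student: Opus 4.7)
The strategy is to apply Corollary \ref{CorTypI}: I must verify that $\alpha^{\circ}$ is ergodic, integrable, and that $N_{\alpha} \rtimes_{\alpha^{\circ}} \hat{M}'$ is a type $I$ factor. The pivotal structural fact, already noted in the text, is that since $N$ is a type $I$ factor one has a canonical tensor splitting $N \rtimes M \cong N \vNtimes N_{\alpha}$ (via $\alpha(x) y \leftrightarrow x \otimes y$). Combined with the observation that $\hat{\alpha}$ fixes $\alpha(N)$ as coinvariants and that by definition $\alpha^{\circ}$ is the restriction of $\hat{\alpha}$ to $N_{\alpha}$, this splitting intertwines $\hat{\alpha}$ with $\id_{N} \otimes \alpha^{\circ}$.

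Granting this intertwining, ergodicity follows from the Takesaki-Takai-type duality: the coinvariants of the dual coaction $\hat{\alpha}$ are precisely $\alpha(N)$, a standard consequence of the biduality result Theorem \ref{TheoBiduCoac}. Pulled through the tensor decomposition this yields $N \vNtimes N_{\alpha}^{\alpha^{\circ}} = N \vNtimes \C$, so $N_{\alpha}^{\alpha^{\circ}} = \C$. Integrability is equally direct: any dual coaction is integrable (noted after Theorem \ref{TheoBiduCoac}), and under the identification $\hat{\alpha} = \id \otimes \alpha^{\circ}$ the operator-valued integral $(\id \otimes \hat{\varphi}') \circ \hat{\alpha}$ restricts on the right tensor factor to $(\id \otimes \hat{\varphi}') \circ \alpha^{\circ}$, which supplies a $\sigma$-weakly dense cone of integrable positive elements in $N_{\alpha}$.

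For the type $I$ factor property of $N_{\alpha} \rtimes_{\alpha^{\circ}} \hat{M}'$, Theorem \ref{TheoBiduCoac} gives $(N \rtimes M) \rtimes_{\hat{\alpha}} \hat{M}' \cong N \vNtimes B(L^{2}(M))$. Rewriting the left-hand side using the tensor splitting and the form $\hat{\alpha} = \id \otimes \alpha^{\circ}$, it becomes $N \vNtimes (N_{\alpha} \rtimes_{\alpha^{\circ}} \hat{M}')$; cancelling the type $I$ factor $N$ yields $N_{\alpha} \rtimes_{\alpha^{\circ}} \hat{M}' \cong B(L^{2}(M))$, a type $I$ factor. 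Corollary \ref{CorTypI} then completes the proof.

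The main technical obstacle is justifying rigorously that the splitting $N \rtimes M \cong N \vNtimes N_{\alpha}$ intertwines $\hat{\alpha}$ with $\id_{N} \otimes \alpha^{\circ}$: this requires tracing through the explicit form \eqref{EqDualCoaction} of $\hat{\alpha}$ and checking that $W_{\Theta,23}$ commutes with the image of $\alpha(N)$ in the appropriate tensor legs, together with the identification of the coinvariants of $\hat{\alpha}$ with $\alpha(N)$ via biduality. Once this compatibility is fixed, the three conclusions above follow by essentially formal manipulations involving biduality and cancellation of type $I$ tensor factors.
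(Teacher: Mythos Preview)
Your argument is correct. The paper itself gives no proof beyond a citation to \cite[Theorem 7.2]{DeC11}, so there is nothing to compare against line by line; what you have written is essentially the natural argument one would expect to find behind that citation, organised around Corollary~\ref{CorTypI}. The key structural input---that the tensor splitting $N\rtimes M\cong N\vNtimes N_{\alpha}$ intertwines $\hat{\alpha}$ with $\id_N\otimes\alpha^{\circ}$---is exactly right, and once it is in place ergodicity, integrability and the type~$I$ property of the crossed product all reduce, as you say, to standard facts about the dual coaction and biduality. Two minor remarks: the identity $(N\rtimes M)^{\hat{\alpha}}=\alpha(N)$ is not literally a corollary of Theorem~\ref{TheoBiduCoac} as stated but is the content of \cite[Theorem~2.7]{Vae01} (which the paper invokes elsewhere); and for the final step you only need that a tensor complement of a type~$I$ factor inside a type~$I$ factor is again type~$I$, which is elementary, so the word ``cancelling'' is harmless here.
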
 

\begin{proof} This is contained in the proof of Theorem 7.2 of \cite{DeC11}.
\end{proof}

In particular, let $\alpha:N\rightarrow N\vNtimes M$ be a right Galois object for $(M,\Delta)$. As $N\rtimes M$ is a type $I$-factor, $\hat{\alpha}$ is a projective corepresentation of $\hat{M}'$. We hence obtain a right Galois object for $(M,\Delta)$ defined by \[((N\rtimes_{\alpha}M)_{\hat{\alpha}},\hat{\alpha}^{\circ}).\]

\begin{Theorem}\label{TheoBidualGal} Let $(N,\alpha)$ be a right Galois object for $(M,\Delta)$. Then we have an isomorphism of von Neumann algebras \[\pi: N \cong (N\rtimes_{\alpha}M)_{\hat{\alpha}}, \quad x\mapsto \chi_{\rtimes}(\Ww^*(x\otimes 1)\Ww)\] which is equivariant in the sense that \begin{equation}\label{EqEqProj} (\pi \otimes \Ad(\Theta))(\alpha(x)) = \hat{\alpha}^{\circ}(\pi(x)),\qquad x\in N.\end{equation}
\end{Theorem}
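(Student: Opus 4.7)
The plan is to construct $\pi$ as a well-defined injective equivariant $*$-homomorphism into $(N\rtimes M)_{\hat\alpha}$ and then deduce surjectivity from the rigidity of Galois objects.

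First, since $\Ww \in N\vNtimes B(L^2(M), L^2(N))$ by \eqref{EqWInN}, $\Ww$ commutes with $y'\otimes 1$ for every $y'\in N'$; combined with $[x,y']=0$, this places $\Ww^*(x\otimes 1)\Ww$ in $N\vNtimes B(L^2(M))$, so that $\pi(x) = \chi_\rtimes(\Ww^*(x\otimes 1)\Ww)$ is a well-defined element of $(N\rtimes M)\rtimes \hat M'$ and $\pi$ is an injective $*$-homomorphism. To show $\pi(x) \in (N\rtimes M)_{\hat\alpha}$, I check commutation with $\hat\alpha(z)$ for the generators $z=\alpha(n)$ and $z=1\otimes m'$ of $N\rtimes M$. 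Since $\chi_\rtimes$ is an isomorphism, these reduce to commutations inside $N\vNtimes B(L^2(M))$: the first, using $\alpha(n) = \Ww^*(1\otimes n)\Ww$ from \eqref{EqImplGalAl}, is immediate from the commutation of $x\otimes 1$ and $1\otimes n$ on $L^2(N)\otimes L^2(N)$; the second uses \eqref{EqGalImpCross2}, $\Ww(1\otimes m') = (1\otimes \hat\pi'(m'))\Ww$, to move $1\otimes m'$ through $\Ww^*$, after which it commutes with $x\otimes 1$.

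For equivariance, the key computation exploits \eqref{ActAlphN}, $(\alpha\otimes\id)\Ww = \Ww_{13}W_{23}$, to get $(\alpha\otimes\id)(\Ww^*(x\otimes 1)\Ww) = W_{23}^*\Ww_{13}^*(\alpha(x))_{12}\Ww_{13}W_{23}$. Conjugating by $W_{23}$ cancels the outer factors, and subsequent conjugation by $\Sigma_{23}$, together with $\Sigma_{23}\Ww_{13}^*\Sigma_{23} = \Ww_{12}^*$ and $\Sigma_{23}(\alpha(x))_{12}\Sigma_{23} = (\alpha(x))_{13}$, rewrites the result as $\Ww_{12}^*(\alpha(x))_{13}\Ww_{12} = (\chi_\rtimes^{-1}\pi\otimes\id)(\alpha(x))$. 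The bidual equivariance \eqref{EqEqBidual} applied at $y=\pi(x)$ then yields $\alpha^{\wedge\wedge}(\pi(x)) = (\pi\otimes\Ad(\Theta))(\alpha(x))$, which restricts on $(N\rtimes M)_{\hat\alpha}$ to the desired \eqref{EqEqProj}.

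For surjectivity, Theorem \ref{TheoRightGaloisDual} ensures that $((N\rtimes M)_{\hat\alpha},\hat\alpha^\circ)$ is itself a right Galois object (for the appropriate bidual quantum group, identified with $M$ via $\Ad(\Theta)$). An injective equivariant $*$-homomorphism between two right Galois objects for the same quantum group must be an isomorphism, since equivariance forces $\pi$ to intertwine the essentially unique invariant weights up to scalar and hence extend to an isometry between GNS spaces intertwining the unitary Galois maps, which is then surjective. This rigidity step---making precise the $\Theta$-twist identification of the coacting quantum groups and the comparison of Galois unitaries---is the main technical obstacle.
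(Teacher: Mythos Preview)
Your argument for well-definedness, injectivity, and equivariance is correct and essentially parallels the paper's treatment; the computation reducing \eqref{EqEqProj} to the identity $W_{23}(\alpha\otimes\id)(\Ww^*(x\otimes 1)\Ww)W_{23}^* = \Ww_{13}^*\alpha(x)_{12}\Ww_{13}$ is exactly what the paper does.

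The genuine divergence is in surjectivity. You invoke a rigidity principle for Galois objects (an injective equivariant map between Galois objects is onto), which is true but, as you acknowledge, is itself a non-trivial result---it appears later in this paper as a lemma quoted from \cite[Proposition 5.13]{Eno12}. The paper instead gives a direct commutant computation that bypasses rigidity entirely: one first observes that $\chi_{\rtimes}^{-1}$ identifies $(N\rtimes M)_{\hat\alpha}$ with $(N\vNtimes B(L^2(M)))\cap (N\rtimes M)'$, because $\chi_{\rtimes}^{-1}\circ\hat\alpha$ is the identity on $N\rtimes M$. Then, since conjugation by $\Ww$ sends $N\rtimes M$ onto $1\otimes B(L^2(N))$ (this is \eqref{EqGalCrossIso}, using that $\pi_{\rtimes}$ is surjective by ergodicity and the last part of Theorem~\ref{TheoIntImp}) and preserves $N\vNtimes B(L^2(M))$ (as $\Ww$ has first leg in $N$), one gets immediately
\[
\Ww\bigl((N\vNtimes B(L^2(M)))\cap (N\rtimes M)'\bigr)\Ww^* = (N\vNtimes B(L^2(N)))\cap (B(L^2(N))\otimes 1) = N\otimes 1.
\]
This establishes that $\pi$ is an isomorphism in one stroke, without any appeal to uniqueness of Galois objects or comparison of invariant weights. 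Your route would work, but the paper's is both shorter and logically prior to the rigidity lemma you would need.
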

 
Recall that the isomorphism $\chi_{\rtimes}$ was defined in \eqref{EqDefChiRtimes}.

\begin{proof} This is indirectly contained in \cite{DeC11}, but let us give a direct proof. 

As $\chi_{\rtimes}^{-1}$ puts $N\rtimes M$ is in its ordinary position on $L^2(N)\otimes L^2(M)$, we find \[\chi_{\rtimes}^{-1}( (N\rtimes M)_{\hat{\alpha}})  = (N\vNtimes B(L^2(M))\cap (N\rtimes M)'.\]

Let now $\Ww$ be the Galois unitary for $(N,\alpha)$. It then follows from \eqref{EqImplGalAl}, \eqref{EqWInN}, \eqref{EqGalImpCross2} and the final part of Theorem \ref{TheoIntImp} that \begin{equation}\label{EqConjXX} N\otimes 1 = \Ww((N\vNtimes B(L^2(M))\cap (N\rtimes M)')\Ww^*.\end{equation} This proves that $\pi$ as in the statement of the theorem is a well-defined isomorphism. 

It remains to show \eqref{EqEqProj}, which by \eqref{EqEqBidual} reduces to \[W_{23}(\alpha\otimes \id)(\Ww^*(x\otimes 1)\Ww)W_{23}^*  = \Ww_{13}^*\alpha(x)_{12}\Ww_{13},\qquad x\in N.\] 
But this follows from the fact that $\Ww$ implements $\alpha$, together with the hybrid pentagon equation \eqref{EqHybPent}.
\end{proof} 

Let us single out an important operation which was hidden in the definition of the map $\pi$ in the above theorem.

\begin{Lem} Let $(N,\alpha)$ be a Galois object with Galois unitary $\Ww$. Then \[\Ad_{\alpha}(x) = \Ww^*(x\otimes 1)\Ww\] defines a right coaction of $(\hat{M},\hat{\Delta})$ on $N$.
\end{Lem}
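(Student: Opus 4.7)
Since $\Ad_{\alpha}$ is defined as conjugation by the unitary $\Ww$, it is automatically a normal, unital $*$-homomorphism between the relevant $B(H)$'s. Thus the content of the lemma lies in two assertions: that $\Ad_\alpha(N)\subseteq N\vNtimes\hat{M}$, and that $\Ad_\alpha$ satisfies the coaction identity with respect to $\hat{\Delta}$.

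For the range condition, I would use the commutation theorem $N\vNtimes\hat{M}=(N'\vNtimes\hat{M}')'$ and check that $\Ad_\alpha(x)$ commutes with $y'\otimes 1$ for $y'\in N'$ and with $1\otimes z'$ for $z'\in\hat{M}'$. The first follows from \eqref{EqWInN}, which implies that $\Ww$ intertwines $N'$ acting on the first leg, combined with the fact that $x\in N$ commutes with $y'$. The second uses \eqref{EqGalImpCross2}, $\Ww(1\otimes z')=(1\otimes\hat{\pi}'(z'))\Ww$: after transporting $z'$ across $\Ww^*$ or $\Ww$, both $(1\otimes z')\Ad_\alpha(x)$ and $\Ad_\alpha(x)(1\otimes z')$ reduce to $\Ww^*(x\otimes\hat{\pi}'(z'))\Ww$, because $x\otimes 1$ and $1\otimes\hat{\pi}'(z')$ act on disjoint legs.

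For coassociativity, expanding $\hat{\Delta}(y)=\Sigma W(y\otimes 1)W^*\Sigma$ yields
\[
(\id\otimes\hat{\Delta})(\Ad_\alpha(x))=\Sigma_{23}W_{23}\Ww_{12}^*\,(x\otimes 1\otimes 1)\,\Ww_{12}W_{23}^*\Sigma_{23},
\]
while
\[
(\Ad_\alpha\otimes\id)(\Ad_\alpha(x))=\Ww_{12}^*\Ww_{13}^*\,(x\otimes 1\otimes 1)\,\Ww_{13}\Ww_{12}.
\]
The hybrid pentagon \eqref{EqHybPent} rearranges to $W_{23}\Ww_{12}^*=\Ww_{13}^*\Ww_{12}^*\Ww_{23}$, and taking adjoints to $\Ww_{12}W_{23}^*=\Ww_{23}^*\Ww_{12}\Ww_{13}$. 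Inserting these into the first expression gives
\[
\Sigma_{23}\Ww_{13}^*\Ww_{12}^*\bigl(\Ww_{23}(x\otimes 1\otimes 1)\Ww_{23}^*\bigr)\Ww_{12}\Ww_{13}\Sigma_{23};
\]
since $x$ sits only on leg $1$, it commutes with $\Ww_{23}$, and a final conjugation by $\Sigma_{23}$ swaps the index pairs $12\leftrightarrow 13$, producing the right-hand side. The crucial tactical point is selecting the correct forms of the hybrid pentagon so that the inner operator $x$ ends up sandwiched between $\Ww_{23}$ and $\Ww_{23}^*$, which act on legs disjoint from it; this is the step on which the whole identity hinges.
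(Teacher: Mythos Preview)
Your proof is correct and follows exactly the approach of the paper: the range condition via \eqref{EqWInN} and \eqref{EqGalImpCross2}, and the coaction identity via the hybrid pentagon equation \eqref{EqHybPent}. The paper's proof is merely a one-line sketch of precisely the computation you have written out in full.
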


\begin{proof}  By \eqref{EqWInN} and \eqref{EqGalImpCross2}, we have \[\Ww^*(x\otimes 1)\Ww \in N\vNtimes \hat{M},\qquad x\in N.\] Since \[\hat{\Delta}(y)= \Ad(\Sigma W^*\Sigma)(1\otimes y),\qquad y\in \hat{M},\] the coaction property of $\Ad_{\alpha}$ follows straightforwardly from \eqref{EqHybPent}.
\end{proof}

\begin{Def}\label{DefAdj} We call the coaction $\Ad_{\alpha}$ the \emph{adjoint coaction} associated to the Galois object.
\end{Def} 

In the Hopf algebra setting, this is known as the \emph{Miyashita-Ulbrich action}, see \cite{Sch04}. 
  
\section{Co-linking quantum groupoid von Neumann algebras}\label{SecLinCol}

The following considerations can be found in the Hopf algebra setting in \cite{Bic14}, and in the more general setting of measured quantum groupoids in \cite[Section 5]{Eno12}. However, for our particular situation we can slightly relax the necessary conditions in the following definition.

\begin{Def} A \emph{co-linking quantum groupoid von Neumann algebra} will consist of a collection of four (non-zero) von Neumann algebras $Q^{ij}$ together with a collection of eight unital $*$-homomorphisms \[\Delta^{ij;k}: Q^{ij}\rightarrow Q^{ik}\vNtimes Q^{kj}\] satisfying for all indices \begin{equation}\label{EqJointCoass} (\Delta^{ik;l}\otimes \id)\Delta^{ij;k}(x) = (\id\otimes \Delta^{lj;k})\Delta^{ij;l}(x),\quad x\in Q^{ij},\end{equation} and such that on each $Q^{ij}$ there exist nsf weights $\varphi^{ij},\psi^{ij}$ such that for all indices \begin{equation}\label{EqLeftInvG}(\id\otimes \varphi^{jk})\Delta^{ij;k}(x) = \varphi^{ij}(x)1^{ik},\qquad \forall  x\in (Q^{ij})^+,\end{equation} \begin{equation}\label{EqRightInvG} (\psi^{ik}\otimes \id)\Delta^{ij;k}(x) = \psi^{ij}(x)1^{jk},\qquad \forall x\in (Q^{ij})^+,\end{equation}
with $1^{ij}$ the unit in $Q^{ij}$. 
\end{Def} 

We further write \[Q =  Q_{11}\oplus Q_{12}\oplus Q_{21} \oplus Q_{22},\] for the total von Neumann algebra, and \[\Delta_Q: Q\rightarrow Q\vNtimes Q,\quad x \mapsto \sum_{i,j,k} \Delta^{ij;k}(1^{ij}x),\quad x\in Q.\] Note that $Q$ is a non-unital normal $^*$-homomorphism such that \[(\Delta_Q\otimes \id)\Delta_Q = (\id\otimes \Delta_Q)\Delta_Q,\quad \Delta_Q(1^{ij}) = \sum_k 1^{ik}\otimes 1^{kj}.\] 

Consider now the following two faithful unital $*$-homomorphisms from $\C^2$ into $Q$, \begin{equation}\label{EqInca} a: \C^2\rightarrow Q,\quad (r,s) \mapsto r(1^{11}+1^{12}) + s(1^{21}+1^{22}),\end{equation}\begin{equation}\label{EqIncb}b: \C^2\rightarrow Q,\quad (r,s) \mapsto r(1^{11}+ 1^{21}) + s(1^{12}+1^{22}).\end{equation} Then identifying \[L^2(Q) = \oplus_{ij} L^2(Q^{ij}),\quad \xi  =   \oplus \xi^{ij},\] we can view $L^2(Q)$ as a $\C^2$-$\C^2$-bimodule by left multiplication composed with the maps $a$ and $b$. Considering on $\C^2$ the positive functional \[\epsilon: \C^2\rightarrow \C,\quad (r,s)\mapsto r+s,\] so that the GNS-space of $\epsilon$ can be identified as $\C^2$ with the usual Hilbert space structure and standard basis $\{e_i\}$, it then follows that each $\xi^{ij} \in L^2(Q^{ij})$ is left bounded with respect to $(b,\epsilon)$, with \[L_{\epsilon}(\xi^{ij})e_k = \delta_{kj}\xi^{ij},\quad L_{\epsilon}(\xi^{ij})^*\eta^{kl} = \delta_{ik}\delta_{jl}\langle \xi^{ij},\eta^{kl}\rangle e_j.\] It follows that  the \emph{relative tensor product} (or \emph{Connes fusion}) \[L^2(Q) \,{}_{b}\!\underset{\epsilon}{*}\! {}_a\, L^2(Q),\] see for example \cite[Chapter IX.3]{Tak2}, can be identified with the direct sum  $\oplus_{ijk} L^2(Q^{ij})\otimes L^2(Q^{jk})$ via the unitary \[\oplus_{ijk} L^2(Q^{ij})\otimes L^2(Q^{jk}) \rightarrow L^2(Q) \,{}_{b}\!\underset{\epsilon}{*}\! {}_a\, L^2(Q),\quad \xi^{ij}\otimes \eta^{jk} \mapsto \xi^{ij} \underset{\epsilon}{\otimes} \eta^{jk}.\] It follows as well immediately that the \emph{fiber product} $Q \,{}_{b}\!\underset{\C^2}{*}\! {}_a\, Q$, as defined in \cite{Sau83}, can be identified under the above isomorphism as \[ Q \,{}_{b}\!\underset{\C^2}{*}\! {}_a\, Q \cong \oplus_{ijk} Q^{ij}\vNtimes Q^{jk}.\] We see then that $\Delta_Q$ is in fact a \emph{unital} $*$-homomorphism \[\Delta_Q: Q \rightarrow  \oplus_{ijk} Q^{ij}\vNtimes Q^{jk}\subseteq Q\vNtimes Q,\] and that $(Q,\C^2,a,b,\Delta_Q)$ forms a \emph{Hopf bimodule} in the sense of \cite[Definition 2.2.1]{Val96}.

Let now $T_{\varphi},T_{\psi}$ be the respective operator valued weights \[T_{\varphi}: Q^+ \rightarrow a(\C^2),\quad x \mapsto \sum_{ij} \varphi^{ij}(x) a(e_i),\quad x\in Q^+,\]\[T_{\psi}: Q^+ \rightarrow b(\C^2),\quad x\mapsto \sum_{ij}\psi^{ij}(x)b(e_i),\quad x\in Q^+.\] It is trivial to verify that $T_{\varphi}$ and $T_{\psi}$ are respectively \emph{left and right} invariant nsf operator valued weights in the sense of \cite[Definition 3.2]{Les07}. We further write \[\varphi_Q = \epsilon \circ T_{\varphi}  = \oplus_{ij}\varphi^{ij},\quad \psi_Q = \epsilon \circ T_{\psi} = \oplus_{ij} \psi^{ij},\] and by $\Lambda_Q = \oplus_{ij} \Lambda^{ij}$ and $\Gamma_Q = \oplus_{ij} \Gamma^{ij}$ the associated GNS-maps. 

We now obtain immediately from the above and the results of \cite{Les07} the following theorem, noting that we have, in the notation of \cite[Section 3.2]{Les07} \[\hat{b}(x) = J_{Q}a(x)^*J_Q = a(x),\quad x\in \C^2\] and \[L^2(Q) \,{}_a\!\underset{\epsilon}{*}\! {}_a\, L^2(Q) \cong \oplus_{ijk} L^2(Q^{ij})\otimes L^2(Q^{ik}).\]

\begin{Theorem}\cite[Corollary 3.18 and Theorem 3.51]{Les07}\label{TheoMUQGrd} There exists a unique partial isometry \[\mWw \in Q\vNtimes B(L^2(Q))\] with \[\mWw^*\mWw =\sum_{ijk} 1^{ij}\otimes 1^{jk} ,\quad \mWw\mWw^*  = \sum_{ijk} 1^{ij}\otimes 1^{ik},\] and such that for all $\omega \in Q_*$ \[(\omega\otimes \id)(\mWw^*)\Lambda_Q(x) = \Lambda_Q((\omega\otimes \id)\Delta_Q(x)),\quad x\in \mathscr{N}_{\varphi_Q}.\] Moreover, $\mWw$ satisfies the pentagon equation \[\mWw_{12}\mWw_{13}\mWw_{23} = \mWw_{23}\mWw_{12}\] and implements $\Delta_Q$ in the sense that \[\Delta_Q(x) = \mWw^*(1\otimes x)\mWw,\qquad \forall x\in Q.\] 
\end{Theorem}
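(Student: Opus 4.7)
The statement is essentially a direct invocation of Lesieur's general theory \cite{Les07} applied to our specific Hopf bimodule, so my proof plan amounts to verifying that all hypotheses of \cite[Corollary 3.18 and Theorem 3.51]{Les07} are in place. The bulk of this verification has actually already been carried out in the paragraphs preceding the theorem statement: we have exhibited the Hopf bimodule $(Q,\C^2,a,b,\Delta_Q)$ in the sense of \cite[Definition 2.2.1]{Val96}, together with nsf operator valued weights $T_\varphi : Q^+ \to a(\C^2)$ and $T_\psi : Q^+ \to b(\C^2)$ which are left and right invariant in the sense of \cite[Definition 3.2]{Les07} by the very definitions \eqref{EqLeftInvG} and \eqref{EqRightInvG}.

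First I would check the remaining technical axioms that turn a Hopf bimodule with invariant operator valued weights into a measured quantum groupoid in Lesieur's sense, in particular the compatibility of the modular automorphism groups of $\varphi_Q$ and $\psi_Q$ on the basis. In our case this is painless: the basis is the finite-dimensional abelian $\C^2$, and the images $a(\C^2)$ and $b(\C^2)$ both sit inside the center of $Q$ (they are linear combinations of the central projections $1^{ij}$). Hence the modular automorphism groups $\sigma_t^{\varphi_Q}$ and $\sigma_t^{\psi_Q}$ act trivially on $a(\C^2)$ and $b(\C^2)$ and commute on them, and the quasi-invariance relation between $\varphi_Q$ and $\psi_Q$ reduces to the elementary modular theory of each $Q^{ij}$ separately.

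Once these hypotheses are in place, Lesieur's construction of the fundamental pseudo-multiplicative partial isometry applies verbatim: $\mWw$ is defined by the prescription $(\omega\otimes\id)(\mWw^*)\Lambda_Q(x) = \Lambda_Q((\omega\otimes\id)\Delta_Q(x))$ for $x\in \mathscr{N}_{\varphi_Q}$, initially on the range of $\Lambda_Q$ and then extended by continuity. The source and range projections are the projections onto the relative tensor products $L^2(Q)\,{}_b\!\underset{\epsilon}{*}\!{}_a\,L^2(Q)$ and $L^2(Q)\,{}_a\!\underset{\epsilon}{*}\!{}_a\,L^2(Q)$ respectively. Under the identifications recorded just before the theorem statement, these subspaces of $L^2(Q)\otimes L^2(Q)$ correspond exactly to the ranges of $\sum_{ijk}1^{ij}\otimes 1^{jk}$ and $\sum_{ijk}1^{ij}\otimes 1^{ik}$, yielding the claimed formulas for $\mWw^*\mWw$ and $\mWw\mWw^*$.

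The pentagon relation and the implementation formula $\Delta_Q(x) = \mWw^*(1\otimes x)\mWw$ are then precisely the content of \cite[Theorem 3.51]{Les07}. The only real obstacle in this plan is bookkeeping: one must keep track of which of Lesieur's axioms are being used at each stage and check that the relative tensor products over the abelian base $\C^2$ really do collapse to the direct sum $\oplus_{ijk}L^2(Q^{ij})\otimes L^2(Q^{jk})$ as claimed, so that the source and range projections take the transparent form stated. Since $\epsilon$ is a faithful positive functional on the finite-dimensional $\C^2$ and the bimodule structure is diagonal in the indices $ij$, this reduction is straightforward and the rest of the theorem follows by direct application of the cited results.
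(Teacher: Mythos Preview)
Your proposal is correct and matches the paper's approach exactly: the paper offers no proof beyond the citation, relying on the preceding paragraphs to exhibit $(Q,\C^2,a,b,\Delta_Q)$ as a Hopf bimodule with invariant operator valued weights $T_\varphi$, $T_\psi$, and then invoking \cite[Corollary 3.18 and Theorem 3.51]{Les07} directly. Your elaboration of which hypotheses need checking and how the relative tensor products over $\C^2$ collapse to the stated direct sums is precisely the bookkeeping the paper leaves implicit.

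One small point of logical hygiene worth flagging: you are right to restrict your modular compatibility check to the base $\C^2$, since that is all the cited results of Lesieur require at this stage. The full commutation of $\sigma^{\varphi_Q}$ and $\sigma^{\psi_Q}$ on all of $Q$ (the remaining axiom for a measured quantum groupoid in the sense of \cite[Definition 3.7]{Eno08}) is established only \emph{after} this theorem, in Corollary~\ref{CorMQGr}, and its proof in fact uses the Galois object structure extracted from the present theorem. So it is important not to assume it here, and you do not.
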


By restricting $\mWw$, we obtain unitaries \[W^{ij;k}: L^2(Q^{ij})\otimes L^2(Q^{jk}) \rightarrow L^2(Q^{ij})\otimes L^2(Q^{ik})\] satisfying the sixteen \emph{hybrid pentagon equations}
\begin{equation}\label{EqHybPentW}  W_{12}^{ij;k}W_{13}^{ij;l}W_{23}^{jk;l} = W_{23}^{ik;l}W_{12}^{ij;k}
\end{equation}
as maps from $L^2(Q^{ij})\otimes L^2(Q^{jk})\otimes L^2(Q^{kl})$ to $L^2(Q^{ij})\otimes L^2(Q^{ik})\otimes L^2(Q^{il})$. These implement the $\Delta^{ij;k}$ in the sense that \begin{equation}\label{EqPartCom}\Delta^{ij;k}(x) = (W^{ik;j})^*(1\otimes x)W^{ik;j},\quad x\in Q^{ij}.\end{equation}

To show now that $(Q,\Delta_Q)$ fits within the theory of measured quantum groupoids, we still have to show, following the axiom system as in \cite[Definition 3.7]{Eno08}, that the modular automorphism groups of $\varphi_Q$ and $\psi_Q$ commute. Let us prove this by using directly the results of \cite{DeC11}, making contact with the theory of Galois objects.

\begin{Lem} If $(Q,\Delta_Q)$ is a co-linking quantum groupoid von Neumann algebra, then  $(Q^{12},\Delta^{12;2})$ is a right Galois object for $(Q^{22},\Delta^{22;2})$ with Galois unitary $\Ww=W^{12;2}$. Similarly, $(Q^{21},\Delta^{21;1})$ is a right Galois object for $(Q^{11},\Delta^{11;1})$. 
\end{Lem}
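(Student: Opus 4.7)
The plan is to verify the three defining properties of a right Galois object for $(Q^{12}, \Delta^{12;2})$ over $(Q^{22}, \Delta^{22;2})$: that $\Delta^{12;2}$ is a right coaction, that it is ergodic and integrable, and that its Galois unitary is $W^{12;2}$. Specializing the co-linking axioms \eqref{EqJointCoass}, \eqref{EqLeftInvG}, and \eqref{EqRightInvG} to all indices equal to $2$ immediately makes $(Q^{22}, \Delta^{22;2}, \varphi^{22}, \psi^{22})$ into a quantum group von Neumann algebra. The coaction identity $(\Delta^{12;2}\otimes \id)\Delta^{12;2} = (\id \otimes \Delta^{22;2})\Delta^{12;2}$ is just \eqref{EqJointCoass} with $(i,j,k,l) = (1,2,2,2)$, and integrability together with the identification of $\varphi^{12}$ as the induced left-invariant weight in the sense of \eqref{EqVarphiN} is \eqref{EqLeftInvG} with $(i,j,k) = (1,2,2)$, which gives $(\id \otimes \varphi^{22})\Delta^{12;2}(x) = \varphi^{12}(x)\cdot 1^{12}$ on $(Q^{12})^+$.

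For the Galois map, the discussion immediately following Theorem \ref{TheoMUQGrd} already provides $W^{12;2}$ as a \emph{unitary} obtained by restriction of $\mWw$. Restricting the defining GNS-relation for $\mWw$ to $x\in \mathscr{N}_{\varphi^{12}}$ and slicing by functionals localized on the $Q^{12}$-block reproduces exactly the defining equation for the Galois map of $\Delta^{12;2}$, so that $\Ww = W^{12;2}$ as claimed.

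The delicate step, and the main obstacle I anticipate, is ergodicity. For a coinvariant $x\in (Q^{12})^+$, the left-invariance formula collapses to $\varphi^{22}(1^{22})\cdot x = \varphi^{12}(x)\cdot 1^{12}$ in the extended positive part of $Q^{12}$, which immediately yields $x \in \C\cdot 1^{12}$ in the compact case $\varphi^{22}(1^{22})<\infty$. In the general (non-compact) case such a direct argument is insufficient; the plan is to use \eqref{EqPartCom} to rewrite coinvariance of $x$ as the intertwining identity $(1\otimes x)W^{12;2} = W^{12;2}(x\otimes 1^{22})$, and then to combine this with the hybrid pentagon equation \eqref{EqHybPentW} relating $W^{12;2}$ to $W^{22;2}$ and with the unitarity of $W^{12;2}$, so as to transfer the coinvariance into a joint commutation of $x$ with both $Q^{12}$ (in its standard representation on $L^2(Q^{12})$) and with a dual-side representation extracted from the Galois unitary, forcing $x$ to be a scalar.
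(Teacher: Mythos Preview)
Your handling of the quantum group structure on $(Q^{22},\Delta^{22;2})$, the coaction identity, integrability, and the identification of $W^{12;2}$ as the Galois unitary is correct and coincides with the paper's one-line proof. The only divergence is ergodicity, which the paper declares ``immediate from the definition'' while you flag it as the delicate step and sketch a pentagon-equation argument.

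The pentagon route is unnecessary and, as you describe it, incomplete: you never say precisely which commutation relations on $L^2(Q^{12})$ you extract from \eqref{EqHybPentW}, nor why they force $x$ to be scalar. The point you are missing is a standard sandwich trick that makes \eqref{EqLeftInvG} do all the work, also in the non-compact case. Take a coinvariant $x\in (Q^{12})^+$ and any $y\in (Q^{12})^+$ with $0<\varphi^{12}(y)<\infty$ (such $y$ exist since $\varphi^{12}$ is nsf). Then
\[
(\id\otimes\varphi^{22})\Delta^{12;2}(x^{1/2}yx^{1/2})
=(x^{1/2}\otimes 1)\bigl[(\id\otimes\varphi^{22})\Delta^{12;2}(y)\bigr](x^{1/2}\otimes 1)
=\varphi^{12}(y)\,x
\]
by coinvariance of $x$ and \eqref{EqLeftInvG} applied to $y$, while \eqref{EqLeftInvG} applied directly to $x^{1/2}yx^{1/2}$ gives $\varphi^{12}(x^{1/2}yx^{1/2})\,1^{12}$. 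Comparing the two expressions yields $x\in\C\,1^{12}$. This is the ``immediate'' argument the paper has in mind; once you insert it, your proof matches the paper's.
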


\begin{proof} It follows immediately from the definition that $(Q^{22},\Delta^{22;2})$ is a quantum group von Neumann algebra with invariant weights $\varphi^{ii}$ and $\psi^{ii}$, and that $(Q^{12},\Delta^{12;2})$ is an ergodic and integrable right coaction for $(Q^{22},\Delta^{22;2})$. From Theorem \ref{TheoMUQGrd} it follows that $(Q^{12},\Delta^{12;2})$ is in fact a Galois object, with the above unitary as Galois unitary. 

The second statement follows by symmetry.
\end{proof}

Note that from the arguments in the proof of the above lemma, it follows as well that all weights $\varphi^{ij}$ and $\psi^{ij}$ are unique up to positive scalars. In fact, if we  fix the $\varphi^{ii}$ and $\psi^{ii}$, it follows that all $\varphi^{ij},\psi^{ij}$ are uniquely determined. 

\begin{Cor}\label{CorMQGr} Let $(Q,\Delta_Q)$ be a co-linking quantum groupoid. Then $(Q,\Delta_Q)$ defines a measured quantum groupoid in the sense of \cite[Definition 3.7]{Eno08}.
\end{Cor}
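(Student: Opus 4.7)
The plan is to verify the axioms of \cite[Definition 3.7]{Eno08} one by one, observing that most are already in hand from the preceding discussion. The Hopf bimodule structure $(Q,\C^2,a,b,\Delta_Q)$ has been constructed before Theorem~\ref{TheoMUQGrd}, and the left- and right-invariant nsf operator-valued weights $T_\varphi,T_\psi$ have been exhibited. The base weight $\epsilon$ on $\C^2$ is a trace, so its modular group is trivial; moreover, the images of $a,b$ are central projections in $Q$ and are fixed by every $\sigma_t^{\varphi_Q}$ and $\sigma_s^{\psi_Q}$, making the base-level compatibility conditions automatic. As the excerpt flags, the only nontrivial axiom remaining is the commutation of the modular automorphism groups of $\varphi_Q$ and $\psi_Q$.

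Because $\varphi_Q=\oplus_{ij}\varphi^{ij}$ and $\psi_Q=\oplus_{ij}\psi^{ij}$ split along the central decomposition $Q=\oplus_{ij}Q^{ij}$, so do $\sigma_t^{\varphi_Q}$ and $\sigma_s^{\psi_Q}$, and it suffices to check commutation separately on each summand. On a diagonal block, $(Q^{ii},\Delta^{ii;i})$ is itself a quantum group von Neumann algebra with invariant weights $\varphi^{ii},\psi^{ii}$, so the standard cocycle identity $(D\psi^{ii}:D\varphi^{ii})_t=\nu_i^{it^2/2}\delta_i^{it}$ together with $\sigma_t^{\varphi^{ii}}(\delta_i^{is})=\nu_i^{ist}\delta_i^{is}$ lets one write $\sigma_s^{\psi^{ii}}=\Ad(\delta_i^{is})\circ\sigma_s^{\varphi^{ii}}$; commutation of $\sigma_t^{\varphi^{ii}}$ with $\sigma_s^{\psi^{ii}}$ then follows from a direct computation using that $\sigma_t^{\varphi^{ii}}$ commutes with itself and scales $\delta_i^{is}$ by a scalar.

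For the off-diagonal block $Q^{12}$, the preceding lemma identifies $(Q^{12},\Delta^{12;2})$ as a right Galois object for $(Q^{22},\Delta^{22;2})$. Specializing \eqref{EqLeftInvG} and \eqref{EqRightInvG} to $k=2$ shows that $\varphi^{12}$ and $\psi^{12}$ are, up to normalization, the canonical left- and right-invariant weights on this Galois object. Thus \eqref{EqConDerPhiPsi} and \eqref{EqRelInv} yield $(D\psi^{12}:D\varphi^{12})_t=\nu^{it^2/2}\delta_{12}^{it}$ with $\sigma_t^{\varphi^{12}}(\delta_{12}^{is})=\nu^{ist}\delta_{12}^{is}$, and the same short computation as in the diagonal case gives commutation on $Q^{12}$. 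The block $Q^{21}$ is handled symmetrically, using the second half of the lemma.

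The main obstacle I anticipate is the bookkeeping in the off-diagonal step: one must carefully identify the weights appearing in the co-linking axioms with the canonical Galois-object weights of Section~2 in order to import the relations \eqref{EqConDerPhiPsi} and \eqref{EqRelInv}. Once this identification is made, everything reduces to the cocycle manipulation above, and the corollary follows by gathering all the pieces against Enock's axiom system.
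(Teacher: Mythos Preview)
Your proposal is correct and follows essentially the same approach as the paper: both reduce to blockwise commutation of the modular groups, invoke the quantum group structure on the diagonal blocks, and use the Galois-object identification together with \eqref{EqConDerPhiPsi} and \eqref{EqRelInv} on the off-diagonal blocks. The only cosmetic differences are that the paper cites commutation on $Q^{ii}$ as a known fact from locally compact quantum group theory rather than redoing the cocycle computation, and invokes the uniqueness result \cite[Theorem 4.23]{DeC11} to identify $\varphi^{ij},\psi^{ij}$ with the canonical Galois-object weights where you verify this directly from \eqref{EqLeftInvG}--\eqref{EqRightInvG}.
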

\begin{proof} As stated, the only thing left to prove is the commutation between the modular automorphism groups of $\varphi_Q$ and $\psi_Q$. However, these split as the direct sum of the modular automorphism groups of the components $\varphi^{ij}$ and $\psi^{ij}$, so it is enough to prove the commutation for the latter.

As $(Q^{11},\Delta^{11;1})$ and $(Q^{22},\Delta^{22;2})$ are quantum group von Neumann algebras, the commutation follows for the modular automorphisms groups  of the weights $\varphi^{ii}$ and $\psi^{ii}$. Further, by uniqueness as in \cite[Theorem 4.23]{DeC11}, $\varphi^{ij}$ and $\psi^{ij}$ must be the left and right invariant weight associated to the Galois object $(Q^{ij},\Delta^{ij;j})$ for $(Q^{jj},\Delta^{jj;j})$. But we know by \eqref{EqConDerPhiPsi} and \eqref{EqRelInv} that then the modular automorphism groups of $\varphi^{ij}$ and $\psi^{ij}$ commute. 
\end{proof} 

Corollary \ref{CorMQGr} now allows us to use the full strength of the theory of \cite{Les07} and \cite{Eno08}. In particular, we distill the following results which we will need. 

\begin{Theorem}\cite[Theorem B.7]{Eno08} There exists on $Q$ a unique $\sigma$-weakly continuous one-parametergroup of automorphisms $(\tau_t^{Q})_{t\in \R}$, called the \emph{scaling group}, such that \[\Delta_Q\circ \tau_t^{Q} = (\sigma_t^{\varphi_Q}\otimes \sigma_{-t}^{\psi_Q})\circ \Delta_Q.\] Moreover, then $\tau_t^Q$ satisfies also \[\Delta_Q \circ \tau_t^Q = (\tau_t^Q\otimes \tau_t^Q)\circ \Delta_Q.\] 
\end{Theorem}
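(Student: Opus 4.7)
The plan is to treat this as essentially an application of \cite[Theorem B.7]{Eno08}, since by Corollary \ref{CorMQGr} the pair $(Q,\Delta_Q)$ is a measured quantum groupoid in the sense of \cite{Eno08}. That theorem produces, on any measured quantum groupoid, a canonical $\sigma$-weakly continuous one-parameter automorphism group of the total von Neumann algebra, intertwining the coproduct with the modular automorphism groups of the left and right invariant (operator valued) weights. So the existence and uniqueness of $\tau_t^Q$ and the first displayed identity come directly from this reference, with the invariant weights being the $\varphi_Q$ and $\psi_Q$ constructed above from $T_\varphi$ and $T_\psi$.

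For the second identity $\Delta_Q\circ \tau_t^Q = (\tau_t^Q\otimes \tau_t^Q)\circ \Delta_Q$, the plan is to copy the argument used in the locally compact quantum group case, cf.\ \eqref{EqTau} and \cite[Theorem 2.3]{VD14}. Concretely, I would first observe that, by uniqueness of $\tau_t^Q$ and the block-diagonality of $\Delta_Q$ on the unital inclusions $a(\C^2)$ and $b(\C^2)$, the one-parameter group $\tau_t^Q$ must leave each block $Q^{ij}$ globally invariant, hence restricts to automorphism groups $\tau_t^{ij}$. On the diagonal blocks $Q^{ii}$, which are quantum group von Neumann algebras with invariant weights $\varphi^{ii}$, $\psi^{ii}$, uniqueness in the usual locally compact quantum group setting identifies $\tau_t^{ii}$ with the standard scaling group, and \eqref{EqTau} then gives the two-leg identity for $\Delta^{ii;i}$.

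For the off-diagonal blocks, the plan is to use the first identity together with coassociativity \eqref{EqJointCoass} to propagate the two-leg formula. Writing the first identity as $\Delta^{ij;k}\circ \tau_t^{ij} = (\sigma_t^{\varphi^{ik}}\otimes \sigma_{-t}^{\psi^{kj}})\circ \Delta^{ij;k}$ and applying $\id\otimes \Delta^{kj;k}$ on the right and $\Delta^{ik;k}\otimes \id$ on the left, one obtains two expressions involving $\tau_t^{ij}$, the modular groups on the diagonal blocks, and $\sigma_t^{\varphi^{ik}}$, $\sigma_{-t}^{\psi^{kj}}$; combined with left invariance $(\id\otimes \varphi^{kk})\Delta^{ik;k} = \varphi^{ik}(\cdot)1^{ii}$ and right invariance $(\psi^{kk}\otimes \id)\Delta^{kj;k} = \psi^{kj}(\cdot)1^{jj}$ from \eqref{EqLeftInvG} and \eqref{EqRightInvG}, this replaces the modular data by $\tau_t^{ik}\otimes \tau_t^{kj}$ on $\Delta^{ij;k}$, which is what we want.

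The main obstacle will actually be minor: verifying that Enock's formulation and hypotheses match ours sufficiently closely that no further checking is needed (in particular that the operator valued weights $T_\varphi,T_\psi$ as defined are the correct input). Once that is done, the scaling group and its first intertwining property are immediate, and the second intertwining property is a short computation in the spirit of the quantum group case, so we do not expect any real technical hurdle.
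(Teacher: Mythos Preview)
Your proposal is essentially the same as the paper's: the paper does not give any proof of this theorem, it simply quotes both displayed identities as the content of \cite[Theorem B.7]{Eno08}, applicable by Corollary \ref{CorMQGr}. Your first paragraph already captures this completely; the additional block-by-block argument you sketch for the second identity is extra work the paper does not do (Enock's theorem already contains that identity), so while the idea is reasonable, it is not needed here.
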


It follows that $\tau_t^Q$ splits into a direct sum of automorphisms \[\tau_t^Q = \oplus_{ij} \tau_t^{ij}.\] 

\begin{Theorem}\cite[Theorem A.6 and Theorem A.9]{Eno08}\label{TheoEnoUnAn} There exists on $Q$ a unique involutive anti-isomorphism $R_Q$, called the \emph{unitary antipode}, such that, writing $S_Q = R_Q\circ \tau_{-i/2}^Q$, we have for all $\omega \in Q_*$ that \[(\id\otimes \omega)(\mWw) \in \mathscr{D}(S_Q),\quad S_Q((\id\otimes \omega)(\mWw)) = (\id\otimes \omega)(\mWw^*).\] Moreover, $R_Q$ and $\tau_t^Q$ commute, and \[\Delta_Q\circ \tau_t^Q = (\tau_t^Q \otimes \tau_t^Q)\circ \Delta_Q,\quad \Delta_Q\circ R_Q = (R_Q\otimes R_Q)\circ \Delta_Q^{\opp}.\]
\end{Theorem}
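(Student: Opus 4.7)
The plan is to deduce the statement directly from the theorems of Enock cited in its formulation. By Corollary \ref{CorMQGr}, $(Q, \Delta_Q)$ together with the operator-valued weights $T_{\varphi}, T_{\psi}$ is a measured quantum groupoid in the sense of \cite[Definition 3.7]{Eno08}, with pseudo-multiplicative unitary (essentially) the $\mWw$ of Theorem \ref{TheoMUQGrd}. The cited \cite[Theorem A.6 and Theorem A.9]{Eno08} then produce at once the antipode $S_Q$, its polar decomposition $S_Q = R_Q \circ \tau^Q_{-i/2}$ with $R_Q$ an involutive anti-automorphism of $Q$, the characterisation of $S_Q$ through its action on the slices $(\id \otimes \omega)(\mWw)$, the commutation of $R_Q$ with $(\tau_t^Q)_{t \in \R}$, and the anti-comultiplicativity $\Delta_Q \circ R_Q = (R_Q \otimes R_Q) \circ \Delta_Q^{\opp}$. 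Uniqueness of $R_Q$ follows from uniqueness of $S_Q$, which in turn is pinned down by its values on the slices of $\mWw$.

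To see how this specialises to our co-linking structure, one checks that $R_Q$ respects the decomposition $Q = \oplus_{ij} Q^{ij}$. Evaluating the relation $\Delta_Q \circ R_Q = (R_Q \otimes R_Q) \circ \Delta_Q^{\opp}$ on the central projection $1^{ij}$ and using $\Delta_Q^{\opp}(1^{ij}) = \sum_k 1^{kj} \otimes 1^{ik}$, one sees that $R_Q(1^{ij})$ must be a projection in $Q$ whose $\Delta_Q$-image equals $\sum_k 1^{jk} \otimes 1^{ki}$, which forces $R_Q(1^{ij}) = 1^{ji}$. Hence $R_Q$ splits as a direct sum of anti-isomorphisms $R^{ij} : Q^{ij} \to Q^{ji}$. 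On the diagonal, the uniqueness clause identifies $R^{ii}$ with the unitary antipode of the quantum group von Neumann algebra $(Q^{ii}, \Delta^{ii;i})$ recalled in Section 1. On the off-diagonal components $R^{12}$ and $R^{21}$ are determined by restricting the slice characterisation of $S_Q$ to the corresponding pieces $W^{ij;k}$ of $\mWw$.

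The main obstacle, were one to bypass \cite{Eno08} and argue from first principles, would be the analytic construction of $S_Q$: establishing closability of the densely defined map $(\id \otimes \omega)(\mWw) \mapsto (\id \otimes \omega)(\mWw^*)$ and extracting from it the polar decomposition $S_Q = R_Q \circ \tau^Q_{-i/2}$. This is precisely the content of Enock's general construction and is not made easier by the co-linking hypothesis, so the cleanest route is the direct appeal to the cited theorems.
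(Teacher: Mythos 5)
Your proposal is correct and follows exactly the route the paper takes: the result is obtained by invoking \cite[Theorem A.6 and Theorem A.9]{Eno08}, which applies because Corollary \ref{CorMQGr} establishes that $(Q,\Delta_Q)$ is a measured quantum groupoid. Your additional observation that $R_Q$ splits into anti-isomorphisms $R^{ij}:Q^{ij}\to Q^{ji}$ matches the remark the paper makes immediately after the theorem statement.
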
 

It follows immediately that $R_Q$ splits as a direct sum of anti-isomorphisms \[R_Q = \oplus R^{ij},\quad R^{ij}: Q^{ij}\rightarrow Q^{ji}.\]

It is clear that $\tau^{ii}_t$ and $R^{ii}$ restrict to the scaling group and unitary antipode of the von Neumann algebraic quantum groups $(Q^{ii},\Delta^{ii;i})$. In the following we will then scale also the $\psi^{ij}$ such that \[\psi^{ij} = \varphi^{ji}\circ R^{ij}.\] 

Note now that the \emph{scaling operator} for $(Q,\Delta_Q)$ is a scalar $\nu$ by \cite[Theorem 3.8.(vi)]{Eno08}, and must hence coincide with the scaling constant of both $(Q^{11},\Delta^{11;1})$ and $(Q^{22},\Delta^{22;2})$. It is clear that we can then define a unique anti-unitary $\hat{J}_Q$ such that\footnote{We add the factor $\nu^{i/4}$ to be in consistency with the conventions of the first section and the general duality theory for measured quantum groupoids, but this factor will be irrelevant in what follows.} \[\hat{J}_Q: L^2(Q) \rightarrow L^2(Q),\quad \Lambda_Q(x) \mapsto \nu^{i/4}\Gamma_Q(R_Q(x)^*).\] 

\begin{Theorem}\cite[Theorem 3.10 and Theorem 3.11]{Eno08}\label{TheoConjJhat} One has \[\hat{J}_Qx^*\hat{J}_Q = R_Q(x),\quad \forall x\in Q,\] and \begin{equation}\label{EqJInterW}(\hat{J}_Q\otimes J_Q)\mWw(\hat{J}_Q\otimes J_Q) = \mWw^*.\end{equation}
\end{Theorem}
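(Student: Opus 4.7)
My plan is to reduce the two claimed identities to known cases via the block decomposition of $Q$, using the quantum group results of Section 1 on diagonal blocks and the Galois object machinery of Section 2 on off-diagonal blocks; failing a clean direct argument, one can always invoke Enock's general theorems \cite[Theorem 3.10, 3.11]{Eno08} since Corollary \ref{CorMQGr} has already certified $(Q,\Delta_Q)$ as a measured quantum groupoid.

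Each of the operators involved splits along $L^2(Q) = \oplus_{ij} L^2(Q^{ij})$: one has $J_Q = \oplus_{ij} J^{ij}$ with $J^{ij}$ the modular conjugation of $\varphi^{ij}$ on $L^2(Q^{ij})$; $\hat{J}_Q = \oplus_{ij} \hat{J}^{ij}$ with $\hat{J}^{ij}:L^2(Q^{ij}) \to L^2(Q^{ji})$ arising from the formula defining $\hat{J}_Q$; $R_Q = \oplus_{ij} R^{ij}$ with $R^{ij}: Q^{ij} \to Q^{ji}$; and $\mWw$ restricts to the unitaries $W^{ij;k}: L^2(Q^{ij}) \otimes L^2(Q^{jk}) \to L^2(Q^{ij}) \otimes L^2(Q^{ik})$ already introduced. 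Consequently the first identity decomposes into a family of component identities, one for each pair $(i,j)$, and the second decomposes into a family indexed by triples $(i,j,k)$, namely $(\hat{J}^{ji} \otimes J^{jk}) W^{ji;k} (\hat{J}^{ij} \otimes J^{ik}) = (W^{ij;k})^*$.

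On diagonal blocks $i=j=k$ these identities reduce exactly to the quantum group statements \eqref{EqRhatJ} and the analogous commutation between multiplicative unitary and modular conjugation recalled in Section 1, applied to the quantum group von Neumann algebra $(Q^{ii}, \Delta^{ii;i})$, whose scaling constant coincides with the global $\nu$ by the remark preceding Theorem \ref{TheoConjJhat}. For the off-diagonal components of the first identity, the Galois object structure of $(Q^{12}, \Delta^{12;2})$ as a right Galois object for $(Q^{22}, \Delta^{22;2})$ and a left Galois object for $(Q^{11}, \Delta^{11;1})$ supplies the antipode $R^{12}: Q^{12} \to Q^{21}$; the relation $\hat{J}^{ji} x^* \hat{J}^{ij} = R^{ij}(x)$ then follows directly from the defining formula $\hat{J}_Q \Lambda_Q(x) = \nu^{i/4} \Gamma_Q(R_Q(x)^*)$ by comparing the two GNS maps $\Lambda^{ij}$ and $\Gamma^{ji}$ through the modular theories of $\varphi^{ij}$ and $\varphi^{ji}$, using the fixed normalization $\psi^{ij} = \varphi^{ji} \circ R^{ij}$.

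The main obstacle is the off-diagonal case of the second identity. My plan is to bootstrap it from the diagonal case using the hybrid pentagon relations \eqref{EqHybPentW}: substituting the already established identity for $W^{ii;i}$ into a pentagon one of whose legs is diagonal and the other off-diagonal should force the desired intertwining on $W^{ij;k}$. A cleaner alternative, which I expect is what the author adopts, is to verify that the candidate $\hat{J}_Q$ defined via $\Lambda_Q \mapsto \nu^{i/4}\Gamma_Q \circ R_Q \circ {*}$ coincides with the standard conjugation associated to $(Q,\Delta_Q)$ as a measured quantum groupoid, and then simply invoke \cite[Theorem 3.10, 3.11]{Eno08}. Either route completes the proof.
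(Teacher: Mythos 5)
The paper offers no proof of this statement at all: it is imported wholesale from Enock, the only justification being that Corollary \ref{CorMQGr} has certified $(Q,\Delta_Q)$ as a measured quantum groupoid so that \cite[Theorems 3.10 and 3.11]{Eno08} apply. Your ``cleaner alternative'' at the end is therefore exactly the paper's route, and on that score your proposal is aligned with the source (with the caveat, which you correctly flag, that one must identify the paper's explicitly defined $\hat{J}_Q$ with the standard conjugation occurring in Enock's duality theory; the paper's footnote about the $\nu^{i/4}$ normalization is precisely about this matching).

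The direct argument you foreground, however, has a genuine gap at the step you yourself identify as the main obstacle. The block decomposition and the diagonal case are fine, and the first identity does follow from the defining formula for $\hat{J}_Q$ by the module-map computation you sketch (modulo checking $\hat{J}_Q^2=1$, which already requires relating $\Lambda_Q$ and $\Gamma_Q$ via the Connes cocycle). But the off-diagonal instance of \eqref{EqJInterW}, i.e.\ \eqref{EqConjWpiece} with $(i,j,k)=(1,2,2)$, relates the Galois unitary $\Ww=W^{12;2}$ of $(N,\alpha_N)$ to the unitary $W^{21;2}$ implementing the \emph{opposite} Galois object's coaction $\gamma_O=\Delta^{21;2}$; this is substantial structural content and is not forced by the hybrid pentagon equations \eqref{EqHybPentW} together with the diagonal case. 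The pentagon relations are symmetric under conjugating all $W^{ij;k}$ by a coherent family of (anti-)unitaries, so ``substituting the already established identity for $W^{ii;i}$'' cannot by itself pin down the off-diagonal intertwining without an additional uniqueness or density argument that you do not supply. Even in the purely quantum group case $i=j=k$ the corresponding fact \cite[Corollary 2.2]{KV03} needs the full modular theory of the dual weight, and in the off-diagonal case the analogous analysis is exactly what \cite[Sections 4--5]{DeC11} or Enock's general duality theory provides. So the ``should force'' step is a plan, not a proof, and the direct route is not completed; only the citation route closes the argument.
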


Note that $\hat{J}_Q$ splits up into the components \[\hat{J}^{ij}: L^2(Q^{ij}) \rightarrow L^2(Q^{ji}).\] Then \eqref{EqJInterW} splits up into the identities \begin{equation}\label{EqConjWpiece}(\hat{J}^{ij}\otimes J^{ik})W^{ij;k} (\hat{J}^{ji}\otimes J^{jk}) = (W^{ji;k})^*.\end{equation}

The following theorem  shows that a Galois object can be completed into a co-linking quantum groupoid von Neumann algebra.

\begin{Theorem}\label{TheoReconQGr} Let $(N,\alpha_N)$ be a Galois object for a quantum group von Neumann algebra $(M,\Delta_M)$. Then there exists a co-linking quantum groupoid von Neumann algebra $(Q,\Delta_Q)$ such that \begin{equation}\label{EqColinkGal} (Q^{22},\Delta^{22;2}) \cong_{\pi^{22}} (M,\Delta_M),\quad (Q^{12},\Delta^{12;2}) \cong_{\pi^{12}} (N,\alpha_N),\end{equation}
with the understanding that \[\Delta_M \circ\pi^{22} = (\pi^{22}\otimes \pi^{22})\circ \Delta^{22;2},\quad \alpha_N \circ \pi^{12} = (\pi^{12}\otimes \pi^{22})\circ \Delta^{12;2}.\]
\end{Theorem}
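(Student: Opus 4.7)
Two of the four corners are forced by the data: set $(Q^{22},\Delta^{22;2})=(M,\Delta_M)$ with weights $\varphi,\psi$, and $(Q^{12},\Delta^{12;2})=(N,\alpha_N)$ with weights $\varphi_N,\psi_N$. The hybrid unitaries $W^{22;2}=W$ and $W^{12;2}=\Ww$ are then determined, and the hybrid pentagon \eqref{EqHybPent} is already available. All axioms of a co-linking quantum groupoid whose indices $(i,j;k)$ lie in $\{(2,2;2),(1,2;2)\}$ are therefore in place. What remains is to build the two missing corners $Q^{11},Q^{21}$, the six remaining comultiplications, and to verify coherence.

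The heart of the argument is the construction of the \emph{reflected} quantum group von Neumann algebra $(Q^{11},\Delta^{11;1})$, which should act on $L^2(N)$ and equip $N$ with a left coaction $\Delta^{12;1}:N\to Q^{11}\vNtimes N$ that commutes with $\alpha_N$. Intrinsically, I would define $Q^{11}\subseteq B(L^2(N))$ as the subalgebra of operators that intertwine the right $M$-coaction in the sense appropriate to the Galois unitary, i.e.\ so that $\Delta^{12;1}(x):=\Ww^*(x\otimes 1)\Ww$ lands in $Q^{11}\vNtimes N$ after one identifies $B(L^2(N))\cong N\rtimes M$ via Corollary \ref{CorTypI} and cuts down by the commutant of the dual action $\hat\pi'$ of \eqref{EqRephatM}; the adjoint coaction of Definition \ref{DefAdj}, together with relations \eqref{EqImplGalAl}--\eqref{EqGalImpCross2}, provides the compatibility between $\Delta^{12;1}$ and $\alpha_N$. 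The comultiplication $\Delta^{11;1}:Q^{11}\to Q^{11}\vNtimes Q^{11}$ is then obtained by reapplying the same transport, and its coassociativity reduces to the hybrid pentagon equation \eqref{EqHybPent}. The invariant weights $\varphi^{11},\psi^{11}$ come from the uniqueness-up-to-scalar of invariant weights on Galois objects \cite[Theorem 4.19]{DeC11}, applied to $(N,\Delta^{12;1})$ viewed as a left Galois object over $Q^{11}$.

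The fourth corner $Q^{21}$, together with the four remaining comultiplications, is produced by a symmetric ``mirror'' construction: apply the same procedure to the opposite/commutant Galois object provided canonically by the isomorphism $\theta$ of \eqref{EqIsoOpCop}. The $J$-conjugation identities \eqref{EqConjWpiece} then determine the remaining hybrid unitaries $W^{ij;k}$ from the ones already built, and one normalises $\psi^{ij}:=\varphi^{ji}\circ R^{ij}$ so that the unitary antipode and scaling group extend diagonally to $R_Q,\tau^Q_t$ as in Theorem \ref{TheoEnoUnAn}.

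\textbf{Main obstacle.} The delicate point is to verify coherence across the full $4\times 4$ grid, namely all sixteen hybrid pentagon equations \eqref{EqHybPentW} and all eight invariance identities \eqref{EqLeftInvG}--\eqref{EqRightInvG}. Rather than unpacking each by hand, the efficient route is to package the data on $Q=\oplus_{ij}Q^{ij}$ into a single multiplicative partial isometry $\mWw\in Q\vNtimes B(L^2(Q))$ whose restrictions are the $W^{ij;k}$, and then invoke the axiomatisation of measured quantum groupoids of \cite[Definition 3.7]{Eno08} together with Enock's completion theorem \cite[Section 5]{Eno12} to conclude that the resulting object is automatically a (co-linking) measured quantum groupoid extending $(N,\alpha_N)$. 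This simultaneously circumvents the bookkeeping and yields the essential uniqueness of the completion.
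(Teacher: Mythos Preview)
The paper's proof of this theorem is a single-line citation to \cite[Theorem 5.12]{Eno12}, and your ``Main obstacle'' paragraph ultimately lands on the same reference; so in the end the two arguments agree. The preliminary direct construction you sketch, however, does not match the paper's explicit description (given in the paragraphs following the theorem) and has genuine problems.

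First, your proposed definition of $Q^{11}$ is circular: you characterise $Q^{11}\subseteq B(L^2(N))$ by requiring that $\Ww^*(x\otimes 1)\Ww$ lie in $Q^{11}\vNtimes N$, which presupposes knowing $Q^{11}$. More seriously, the formula $\Ww^*(x\otimes 1)\Ww$ for $x\in N$ is exactly the adjoint coaction $\Ad_{\alpha}$ of Definition~\ref{DefAdj}, which lands in $N\vNtimes \hat M$, not in $Q^{11}\vNtimes N$; so this cannot serve as $\Delta^{12;1}$. Likewise, the map $\theta$ of \eqref{EqIsoOpCop} is an isomorphism $M^{\cop}\to M'$ and has no direct bearing on producing $Q^{21}$ from the Galois object.

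The paper's explicit construction runs in the opposite order and is not self-contained: it first \emph{assumes} Theorem~\ref{TheoReconQGr} (via Enock), then sets $O=Q^{21}$ to be an abstract opposite copy of $N$ with a chosen anti-isomorphism $R_N:N\to O$, defines the external comultiplication $\beta_M:M\to O\vNtimes N$ by the $J$-conjugation formula of Lemma~\ref{LemExtCom}, and finally realises $P=Q^{11}$ as the cotensor product
\[
P=\{x\in N\vNtimes O\mid (\alpha_N\otimes\id)(x)=(\id\otimes\gamma_O)(x)\}.
\]
Each verification (that $\beta_M$ has the stated range, that the hybrid coassociativities hold, etc.) is carried out by identifying the candidate with the corresponding corner $\Delta^{ij;k}$ of the already-existing $(Q,\Delta_Q)$ furnished by Enock's theorem. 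So the paper does not offer an independent ``first principles'' proof either; the Remark following the theorem says as much, noting that a direct route avoiding duality theory is not known to be quick.

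In short: your final invocation of \cite{Eno12} is correct and is precisely what the paper does; the hand-built portion of your proposal should be discarded or, if you want to mirror the paper's explicit description, reorganised around the opposite algebra $O$ and the cotensor-product realisation of $P$, with Enock's theorem invoked up front rather than at the end.
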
 

\begin{proof} This is a special case of \cite[Theorem 5.12]{Eno12}.
\end{proof}

\begin{Rem} The proof of \cite[Theorem 5.12]{Eno12} roughly goes as follows: first, a structure \emph{dual} to $(Q,\Delta_Q)$ is constructed, an argument which, for quantum group von Neumann algebras, was already done at the end of \cite[Section 5]{DeC11}. Then, using the duality theory for measured quantum groupoids, a candidate for $(Q,\Delta_Q)$ is constructred, and it is then verified by standard techniques that the corners of $(Q,\Delta_Q)$ can indeed be identified with $(N,\alpha_N)$ and $(M,\Delta_M)$ in the above way. It would be interesting to obtain a direct way of proving the above theorem, without passing through duality theory, but we could not find a particularly quick way to achieve this. See however the discussion below. 
\end{Rem} 

In what remains, we want to show how to construct a co-linking quantum groupoid von Neumann algebra directly from a Galois object $(N,\alpha_N)$, without having to pass through duality theory. This will also show immediately that a co-linking quantum groupoid as in Theorem \ref{TheoReconQGr} is essentially uniquely determined. 

Let $O$ be a copy of $N$ with the opposite multiplication, and fix an anti-isomorphism
\[R_N: N \rightarrow O,\qquad R_O  =R_N^{-1}.\] Define $\psi_O$ to be the nsf weight \[\psi_O = \varphi_N\circ R_O\] on $O$, and write its GNS-map as $\Gamma_O$. Let 
\begin{equation}\label{EqHatJ}\hat{J}_N: L^2(N)\rightarrow L^2(O),\quad \Lambda_N(x) \mapsto \nu^{i/4}\Gamma_O(R(x)^*),\quad x\in \mathscr{N}_{\varphi_N},\end{equation} which is a well-defined anti-unitary whose inverse we write $\hat{J}_O$. We have that $\hat{J}_N$ and $\hat{J}_O$ implement $R_N$, \[R_N(x) =  \hat{J}_Nx^*\hat{J}_O,\qquad x\in N.\]

\begin{Lem}\label{LemExtCom} There exists a unique unital and faithful $*$-homomorphism \[\beta_M: M \rightarrow O\vNtimes N\] such that \[\beta_M(x) = (\hat{J}_O\otimes J_N)\Ww(1\otimes J_MxJ_M)\Ww^*(\hat{J}_N\otimes J_N).\] 
 \end{Lem}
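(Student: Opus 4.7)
The plan is to first verify that $\beta_M$ is a unital, normal, faithful $*$-homomorphism, and then to show that the range is contained in $O \vNtimes N$. For the first part, observe that $x \mapsto J_M x J_M$ is a normal anti-linear $*$-preserving unital injective homomorphism from $M$ onto $M'$; conjugation by the unitary $\Ww$ preserves these properties, while the outermost conjugation by the anti-unitary $\hat{J}_N \otimes J_N$ (whose adjoint is $\hat{J}_O \otimes J_N$) introduces a second anti-linearity that cancels the first. Hence $\beta_M$ is a (linear) normal, faithful, unital $*$-homomorphism from $M$ into $B(L^2(O) \otimes L^2(N))$.

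The main work is to check that $\beta_M(x)$ commutes with the commutant $O' \vNtimes N'$ of $O \vNtimes N$. For the second leg, fix $n' \in N'$ and set $n'' = J_N n' J_N \in N$. Then $(\hat{J}_N \otimes J_N)(1 \otimes n') = (1 \otimes n'')(\hat{J}_N \otimes J_N)$, and the implementation \eqref{EqImplGalAl} gives $\Ww^*(1 \otimes n'') = \alpha(n'')\Ww^*$ with $\alpha(n'') \in N \vNtimes M$. Since $1 \otimes J_M x J_M \in 1 \otimes M'$ commutes with $\alpha(n'')$, I would slide $\alpha(n'')$ across this element and use $\Ww \alpha(n'') = (1 \otimes n'')\Ww$ to recover a factor of $1 \otimes n''$, which under the outer anti-unitary conjugation reappears as $1 \otimes n'$; this shows $\beta_M(x)$ commutes with $1 \otimes n'$. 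For the first leg, I would invoke \eqref{EqWInN}: since $\Ww \in N \vNtimes B(L^2(M), L^2(N))$, it commutes with $N' \otimes 1$, so $\Ww(1 \otimes J_M x J_M)\Ww^*$ lies in $N \vNtimes B(L^2(N))$, and combining with the second-leg analysis places it in $N \vNtimes N'$.

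The outer conjugation by $(\hat{J}_O \otimes J_N)(\cdot)(\hat{J}_N \otimes J_N)$ then takes $N \vNtimes N'$ into $(\hat{J}_O N \hat{J}_N) \vNtimes (J_N N' J_N) = (\hat{J}_O N \hat{J}_N) \vNtimes N$. The hard part is identifying $\hat{J}_O N \hat{J}_N$ with $O$. This hinges on the normalisation $\nu^{i/4}$ in \eqref{EqHatJ}: under the natural unitary identification $L^2(O) \cong L^2(N)$ given by $\Gamma_O(y) \leftrightarrow \Lambda_N(R_O(y))$, the map $\hat{J}_N$ becomes $\Lambda_N(x) \mapsto \nu^{i/4}\Lambda_N(x^*)$, so that $\hat{J}_N^2 = \id$ (the two $\nu^{i/4}$ factors cancel through the anti-linearity of $\hat{J}_N$) and therefore $\hat{J}_O$ coincides with $\hat{J}_N$ as operators on $L^2(N)$. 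Then $\hat{J}_O N \hat{J}_N = \hat{J}_N N \hat{J}_N^{-1} = R_N(N) = O$ using the relation $R_N(y^*) = \hat{J}_N y \hat{J}_N^{-1}$, completing the verification that $\beta_M(M) \subseteq O \vNtimes N$.
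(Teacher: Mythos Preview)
Your direct approach differs from the paper's, which instead invokes Theorem~\ref{TheoReconQGr} to embed $(N,\alpha_N)$ in a co-linking quantum groupoid $(Q,\Delta_Q)$ and then simply identifies $\beta_M$ with the structure map $\Delta^{22;1}$ via \eqref{EqConjWpiece}; all the claimed properties then come for free from the groupoid structure. Your commutation arguments establishing $\Ww(1\otimes J_MxJ_M)\Ww^* \in N\vNtimes N'$ are correct and give a more self-contained route, avoiding the measured-quantum-groupoid machinery.

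There is, however, a genuine gap in your final paragraph. The map $\Gamma_O(y)\leftrightarrow \Lambda_N(R_O(y))$ is not isometric: since $R_O$ is anti-multiplicative one has $\|\Gamma_O(y)\|^2 = \psi_O(y^*y) = \varphi_N(R_O(y)R_O(y)^*)$, whereas $\|\Lambda_N(R_O(y))\|^2 = \varphi_N(R_O(y)^*R_O(y))$, and these differ unless $\varphi_N$ is tracial. Correspondingly, the map $\Lambda_N(x)\mapsto \nu^{i/4}\Lambda_N(x^*)$ is a scalar multiple of the Tomita operator $J_N\nabla_N^{1/2}$, which is unbounded in general and so cannot coincide with the anti-unitary $\hat{J}_N$ under any unitary identification; the conclusion $\hat{J}_N^2=\id$ therefore fails. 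The fix is much simpler than what you attempt. The displayed formula in the lemma has $\hat{J}_O$ and $\hat{J}_N$ interchanged (type-check the domains: as written, the rightmost $\hat{J}_N\otimes J_N$ cannot act on $L^2(O)\otimes L^2(N)$; compare also the paper's own later use of the formula). With the corrected outer conjugation $(\hat{J}_N\otimes J_N)(\,\cdot\,)(\hat{J}_O\otimes J_N)$, one needs only the implementation identity $R_N(z) = \hat{J}_N z^*\hat{J}_O$ stated just before the lemma, which for $a\in N$ gives $\hat{J}_N a\hat{J}_O = R_N(a^*) \in O$ directly, without any identification of $L^2$-spaces.
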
 
 \begin{proof} Let $(Q,\Delta_Q)$ be a co-linking quantum groupoid von Neumann algebra with $(Q^{22},\Delta^{22;2}) = (M,\Delta_M)$ and $(Q^{12},\Delta^{12;2}) = (N,\alpha_N)$. Then we may take $L^2(O) = L^2(Q^{21})$ and $\hat{J}_N = \hat{J}^{12}$. Since $\Ww = W^{12;2}$, it then follows by Theorem \ref{TheoConjJhat} that $O = Q^{21}$ and by \eqref{EqConjWpiece} that $\beta_M = \Delta^{22;1}$. 
 \end{proof} 

We call $\beta_M$ the \emph{external comultiplication} of $M$. 

Let now \[\gamma_O(x) = \varsigma (R_N\otimes R_M)\alpha(R_O(x)),\quad x\in O.\] Clearly, this is a left coaction of $(M,\Delta_M)$ on $O$. 

\begin{Lem}\label{LemCombet} The following commutations hold: 
\[(\id\otimes \alpha_N)\beta_M = (\beta_M\otimes \id)\Delta_M.\]
\[(\gamma_O\otimes \id)\beta_M = (\id\otimes \beta_M)\Delta_M.\]
\end{Lem}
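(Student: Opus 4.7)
The plan is to reduce both identities to instances of the joint coassociativity \eqref{EqJointCoass} in a co-linking quantum groupoid von Neumann algebra completing $(N,\alpha_N)$. This exploits the same device already invoked in the proof of Lemma \ref{LemExtCom}: fix $(Q,\Delta_Q)$ as provided by Theorem \ref{TheoReconQGr}, with $(Q^{22},\Delta^{22;2})\cong(M,\Delta_M)$ and $(Q^{12},\Delta^{12;2})\cong(N,\alpha_N)$, and identify $O=Q^{21}$ so that $\beta_M=\Delta^{22;1}$.

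The key intermediate step is to show that $\gamma_O=\Delta^{21;2}$. For this I would use the anti-comultiplicativity of the unitary antipode $R_Q$ from Theorem \ref{TheoEnoUnAn}, namely $\Delta_Q\circ R_Q=\varsigma\circ(R_Q\otimes R_Q)\circ\Delta_Q$, read off componentwise. Since $R_Q=\oplus R^{ij}$ with $R^{12}=R_N$, $R^{22}=R_M$ and $R^{21}=R_O$, the $(2,2)$-component of the above identity, evaluated on $y\in Q^{12}$, reads
\[
\Delta^{21;2}(R_N(y))=\varsigma\circ(R_N\otimes R_M)\circ\Delta^{12;2}(y)=\varsigma\circ(R_N\otimes R_M)\circ\alpha_N(y).
\]
Substituting $y=R_O(x)$ for $x\in O$ and using $R_N\circ R_O=\id_O$ identifies the right-hand side with $\gamma_O(x)$, so $\gamma_O=\Delta^{21;2}$.

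Once these identifications are in place, the two identities in the lemma are simply instances of \eqref{EqJointCoass} applied to $x\in M=Q^{22}$. Concretely, the first identity is \eqref{EqJointCoass} with $(i,j,k,l)=(2,2,2,1)$, which yields
\[
(\id\otimes \Delta^{12;2})\Delta^{22;1}=(\Delta^{22;1}\otimes\id)\Delta^{22;2},
\]
i.e.\ $(\id\otimes\alpha_N)\beta_M=(\beta_M\otimes\id)\Delta_M$. The second identity is \eqref{EqJointCoass} with $(i,j,k,l)=(2,2,1,2)$, giving
\[
(\Delta^{21;2}\otimes\id)\Delta^{22;1}=(\id\otimes\Delta^{22;1})\Delta^{22;2},
\]
i.e.\ $(\gamma_O\otimes\id)\beta_M=(\id\otimes\beta_M)\Delta_M$.

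The only real obstacle is the bookkeeping in the $\gamma_O=\Delta^{21;2}$ step: one has to match the correct componentwise pieces of $R_Q$ and verify that the flip $\varsigma$ in the definition of $\gamma_O$ lines up with the flip coming from $\Delta_Q^{\opp}$. A purely first-principles proof, based on the explicit formula for $\beta_M$ in Lemma \ref{LemExtCom} together with the hybrid pentagon \eqref{EqHybPent} and the implementation \eqref{EqImplGalAl}, is certainly feasible but would amount to rederiving a small fragment of the groupoid coassociativity \eqref{EqJointCoass}; the present route through Theorem \ref{TheoReconQGr} is much more economical.
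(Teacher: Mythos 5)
Your proposal is correct and follows essentially the same route as the paper: the paper's proof likewise places itself in the setting of Lemma \ref{LemExtCom}, identifies $\gamma_O=\Delta^{21;2}$ via Theorem \ref{TheoEnoUnAn} (together with Theorem \ref{TheoConjJhat}, which underlies the identifications $R^{12}=R_N$, $R^{21}=R_O$ you use), and then invokes the hybrid coassociativity \eqref{EqJointCoass}. Your index bookkeeping $(i,j,k,l)=(2,2,2,1)$ and $(2,2,1,2)$ checks out, so you have simply made explicit what the paper leaves terse.
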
 

\begin{proof} In the setting of the proof of Lemma \ref{LemExtCom}, we have that $\gamma_O =\Delta^{21;2}$ by Theorem \ref{TheoEnoUnAn} and Theorem \ref{TheoConjJhat}.  The commutation relations now follow from the hybrid coassociativity identities \eqref{EqJointCoass}.
\end{proof}

\begin{Def} We define \[P = \{x\in N\vNtimes O\mid (\alpha_N\otimes \id)(x) = (\id\otimes \gamma_O)(x)\}.\]
\end{Def}

Clearly $P$ is a unital von Neumann subalgebra of $N\vNtimes O$.

\begin{Lem} There exists on $P$ a unique unital, normal, coassociative comultiplication \[\Delta_P(x) = (\id\otimes \beta_M\otimes \id)(\alpha_N\otimes \id)(x),\quad x\in P\subseteq N\vNtimes O.\] Moreover, there exist on $N$ and $O$ a left, respectively right coaction of $P$ by the formulas \[ \gamma_N(x) = (\id\otimes \beta_M)\alpha_N(x) \in P\vNtimes N \subseteq (N\vNtimes O)\vNtimes N,\]
\[\alpha_O(x) = (\beta_M\otimes \id)\gamma_O(x) \in O\vNtimes P \subseteq O\vNtimes (N\vNtimes O).\]
\end{Lem}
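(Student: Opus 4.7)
The plan is to verify each assertion by a direct diagram chase. The inputs I will use are Lemma~\ref{LemCombet}, the coassociativity of $\alpha_N$ as a right coaction of $M$, the coassociativity of $\gamma_O$ as a left coaction of $M$, and the defining relation $(\alpha_N\otimes\id)(x)=(\id\otimes\gamma_O)(x)$ for $x\in P$. Observe first that the formula $\Phi := (\id\otimes\beta_M\otimes\id)\circ(\alpha_N\otimes\id)$ automatically defines a unital normal $*$-homomorphism $N\vNtimes O\to N\vNtimes O\vNtimes N\vNtimes O$, since $\alpha_N$ and $\beta_M$ are such. The content of the lemma therefore reduces to: (a) showing $\Phi(P)\subseteq P\vNtimes P$, so that $\Delta_P$ is well defined; (b) coassociativity of $\Delta_P$; and (c) the analogous well-definedness and coaction identities for $\gamma_N$ and $\alpha_O$.

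For (a), one must check the $P$-condition on each of the two consecutive $(N,O)$-pairs of slots of $\Phi(x)$. On slots $(1,2)$, applying $\alpha_N$ to slot $1$ commutes with $\beta_M$ on slot $3$, and combining with the inner $\alpha_N$ via $\alpha_N$-coassociativity produces an expression involving $\Delta_M$; applying $\gamma_O$ to slot $2$ produces $(\gamma_O\otimes\id)\beta_M$ in the middle, which by Lemma~\ref{LemCombet} equals $(\id\otimes\beta_M)\Delta_M$, yielding the same five-slot expression. On slots $(3,4)$ the argument is dual: the identity $(\id\otimes\alpha_N)\beta_M=(\beta_M\otimes\id)\Delta_M$ from Lemma~\ref{LemCombet} handles the $\alpha_N$ side, while for the $\gamma_O$ side one first uses the $P$-condition on $x$ to rewrite $(\alpha_N\otimes\id)(x)$ as $(\id\otimes\gamma_O)(x)$ and then invokes the coassociativity of $\gamma_O$.

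For (b), expanding both $(\Delta_P\otimes\id)\Delta_P(x)$ and $(\id\otimes\Delta_P)\Delta_P(x)$ and simplifying — by $\alpha_N$-coassociativity on the left and by the identity $(\id\otimes\alpha_N)\beta_M=(\beta_M\otimes\id)\Delta_M$ on the right — both sides reduce to the common six-slot expression
\[
(\id\otimes\beta_M\otimes\beta_M\otimes\id)(\alpha_N\otimes\id\otimes\id)(\alpha_N\otimes\id)(x).
\]
For (c), the inclusion $\gamma_N(x)\in P\vNtimes N$ is checked exactly as for $\Delta_P$: applying $\alpha_N$ to slot $1$ and $\gamma_O$ to slot $2$ of $\gamma_N(x)=(\id\otimes\beta_M)\alpha_N(x)$ both reduce to the same four-slot form by $\alpha_N$-coassociativity and Lemma~\ref{LemCombet} respectively. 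The coaction identity $(\Delta_P\otimes\id)\gamma_N(x)=(\id\otimes\gamma_N)\gamma_N(x)$ is verified by reducing both sides to $(\id\otimes\beta_M\otimes\beta_M)(\alpha_N\otimes\id)\alpha_N(x)$, using $\alpha_N$-coassociativity on the left and Lemma~\ref{LemCombet} on the right. The claims for $\alpha_O$ are the mirror image and follow from the symmetric argument using the left-coaction coassociativity of $\gamma_O$ together with the other identity in Lemma~\ref{LemCombet}. The main obstacle is purely bookkeeping: the slot counts grow quickly and one must orient each hybrid coassociativity relation in the correct direction at every step.
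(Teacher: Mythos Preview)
Your proposal is correct and follows essentially the same approach as the paper's proof: both verify $\Delta_P(P)\subseteq P\vNtimes P$ and coassociativity by direct diagram chasing using the coaction identities for $\alpha_N$ and $\gamma_O$ together with the two commutation relations from Lemma~\ref{LemCombet}. The paper phrases the range check via slices $(\omega\otimes\id\otimes\id)\Delta_P(x)\in P$ and $(\id\otimes\id\otimes\omega)\Delta_P(x)\in P$ rather than your slot-by-slot formulation, but this is the same computation, and your write-up is in fact more explicit about which identity is invoked at each step.
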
 
\begin{proof} It follows straightforwardly from the coaction property of $\alpha_N$ and $\alpha_O$, the comultiplicativity of $\Delta_M$ and the commutation relations in Lemma \ref{LemCombet} that for all $\omega \in (N\vNtimes O)_*$  and $x\in P\subseteq N\vNtimes O$ one has \[(\omega\otimes \id\otimes \id)\Delta_P(x) \in P,\quad  (\id\otimes \id\otimes \omega)(\Delta_P(x)) \in P.\] Hence $\Delta_P(P)\subseteq P\vNtimes P$, and the coassociativity follows similarly in a straightforward way from the above commutation relations. 

The proof for the existence of the coactions is similar. 
\end{proof}

Let us finally denote by \[\beta_P: P \rightarrow N\vNtimes O\] the identity map.

\begin{Theorem} The von Neumann algebras $P,N,O,M$ together with the eight maps \[\Delta_M,\alpha_N,\gamma_O,\beta_M,\Delta_P,\gamma_N,\alpha_O,\beta_P\] form a co-linking quantum groupoid von Neumann algebra.
\end{Theorem}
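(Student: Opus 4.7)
The plan is to set $Q^{11} := P$, $Q^{12} := N$, $Q^{21} := O$, $Q^{22} := M$, and let the eight listed homomorphisms play the role of the eight $\Delta^{ij;k}$ in the obvious way, namely $\Delta^{22;2} = \Delta_M$, $\Delta^{12;2} = \alpha_N$, $\Delta^{21;2} = \gamma_O$, $\Delta^{22;1} = \beta_M$, $\Delta^{11;1} = \Delta_P$, $\Delta^{12;1} = \gamma_N$, $\Delta^{21;1} = \alpha_O$ and $\Delta^{11;2} = \beta_P$. Most of the $\ast$-homomorphism and codomain requirements are already verified by the preceding lemmas; what remains is (a) the sixteen joint coassociativity relations \eqref{EqJointCoass}, and (b) the construction of left and right invariant nsf weights $\varphi^{ij}, \psi^{ij}$ satisfying \eqref{EqLeftInvG} and \eqref{EqRightInvG}.

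For (a), I would organise the sixteen equations by the value of $(i,j)$. For $(i,j) = (2,2)$ they reduce to the coassociativity of $\Delta_M$ together with the two compatibility identities of Lemma \ref{LemCombet}. For $(i,j) \in \{(1,2),(2,1)\}$, after substituting $\gamma_N = (\id \otimes \beta_M)\alpha_N$ and $\alpha_O = (\beta_M \otimes \id)\gamma_O$, they reduce to the coaction axioms of $\alpha_N$ and $\gamma_O$ plus Lemma \ref{LemCombet}. For $(i,j) = (1,1)$ they involve $\Delta_P, \gamma_N, \alpha_O$ and $\beta_P$; the essential simplification is that $\beta_P$ is the identity inclusion $P \subseteq N\vNtimes O$ and that the defining relation $(\alpha_N \otimes \id)(x) = (\id \otimes \gamma_O)(x)$ for $x\in P$ collapses the two possible ways of iterating $\Delta_P$ into the same element of $N \vNtimes O \vNtimes N \vNtimes O$.

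For (b), the weights $\varphi^{22} = \varphi_M$, $\psi^{22} = \psi_M$, $\varphi^{12} = \varphi_N$, $\psi^{12} = \psi_N$ are supplied by the Galois object theory. On $O$ I would set $\varphi^{21} := \psi_N \circ R_O$ and $\psi^{21} := \varphi_N \circ R_O$; the invariance relations for $\gamma_O$ and $\beta_M$ then follow by transporting the right and left invariance of $\psi_N, \varphi_N$ under $\alpha_N$ through the anti-isomorphism $R_N$, using the explicit formula $\gamma_O = \varsigma(R_N \otimes R_M)\alpha_N R_O$ and the analogous description of $\beta_M$ from Lemma \ref{LemExtCom}. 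For $P$ my plan is to show that $(N,\gamma_N)$ is a left Galois object for $(P,\Delta_P)$: a candidate Galois unitary for $\gamma_N$ is built from $\Ww$ via the embedding $\beta_M$, ergodicity of $\gamma_N$ follows from ergodicity of $\alpha_N$, and integrability follows from that of $\alpha_N$ together with Lemma \ref{LemCombet}. By the left-handed analogues of \cite[Theorem 3.1 and Theorem 4.19]{DeC11}, this forces $(P,\Delta_P)$ to be a quantum group von Neumann algebra equipped with canonical invariant nsf weights $\varphi^{11}, \psi^{11}$, and the four remaining invariance equations (for $\Delta_P, \gamma_N, \alpha_O, \beta_P$) reduce to standard invariance identities for these weights with respect to $\gamma_N$ and $\alpha_O$.

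The main obstacle is the last step: genuinely establishing that $(P,\Delta_P)$ is a quantum group von Neumann algebra with unitary Galois map for $\gamma_N$, without circularity. A shorter alternative, which I would probably present as a parallel argument, is to invoke Theorem \ref{TheoReconQGr} to obtain a co-linking quantum groupoid completing $(N,\alpha_N)$ and identify the $Q^{11}$-corner of that groupoid with our concretely defined $P$ through the common realisation inside $N\vNtimes O$ — both being characterized by the equation $(\alpha_N \otimes \id) = (\id \otimes \gamma_O)$ — then transport the weight structure. Either route keeps the combinatorial verification of the eight invariance relations at the level of unwinding the definitions of $\gamma_N, \alpha_O, \beta_M, \beta_P$ plus applications of $R_N$ and $R_M$.
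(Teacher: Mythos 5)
Your fallback route is the paper's actual proof; your primary route does not close. On the primary route: you correctly flag the obstacle yourself, and it is fatal as written. To speak of $(N,\gamma_N)$ as a left Galois object for $(P,\Delta_P)$ you must already know that $(P,\Delta_P)$ is a quantum group von Neumann algebra, i.e.\ that it carries left and right invariant nsf weights; producing those weights on $P$ is not a routine transport of the weights on $N$ through $\gamma_N$ but is essentially the entire content of the reflection construction, which in the von Neumann setting is only available through the duality machinery for measured quantum groupoids (Theorem \ref{TheoReconQGr}, via \cite{Eno12}) or the dual construction at the end of \cite[Section 5]{DeC11}. So ``this forces $(P,\Delta_P)$ to be a quantum group von Neumann algebra'' is circular, and nothing else in the primary route repairs it. The coassociativity bookkeeping in (a) and the weights on $N$, $O$, $M$ in (b) are fine, but they are the easy part.

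On the fallback: the paper does exactly this, invoking Theorem \ref{TheoReconQGr} to obtain an abstract co-linking quantum groupoid $(Q,\Delta_Q)$ with corners $M$, $N$, $O$, and identifying $Q^{11}$ with the concretely defined $P$ via the map $\Delta^{11;2}\colon Q^{11}\to N\vNtimes O$. But the identification is not automatic from ``both being characterized by the same equation'': coassociativity only gives the inclusion $\Delta^{11;2}(Q^{11})\subseteq P$, and the reverse inclusion is the crux. The paper obtains it by showing that $\Delta_P(P)\subseteq \Delta^{11;2}(Q^{11})\vNtimes\Delta^{11;2}(Q^{11})$, so that $(\Delta^{11;2}\otimes\Delta^{11;2})^{-1}\circ\Delta_P$ maps $P$ into $\{x\in Q^{11}\vNtimes Q^{11}\mid(\Delta^{11;1}\otimes\id)(x)=(\id\otimes\Delta^{11;1})(x)\}$, which equals $\Delta^{11;1}(Q^{11})$ by \cite[Theorem 2.7]{Vae01}; surjectivity of $\Delta^{11;2}$ onto $P$ then follows, and the weight structure transports across the resulting isomorphism $Q\cong P\oplus N\oplus O\oplus M$. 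You would need to supply this surjectivity step to turn your fallback into a proof.
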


\begin{proof} Let us resume the set-up as in the proof of Lemma \ref{LemCombet}. We claim that $Q \cong P\oplus N\oplus O\oplus M$ by the identity maps on the components $N,O,M$ and the map \[\Delta^{11;2}: Q^{11}\rightarrow P.\] Indeed, it follows immediately from the  hybrid coassociativity identities \eqref{EqJointCoass} and the identifications as in the previous lemmas that $\Delta^{11;2}$ has range in $P$. On the other hand, the definition of $\Delta_P$ and the hybrid coassociativity identities \eqref{EqJointCoass} imply that \begin{multline*} \Delta_P(P) \subseteq (\Delta^{11;2}(Q^{11})\vNtimes Q^{12}\vNtimes Q^{21})\cap (Q^{12}\vNtimes Q^{21})\vNtimes \Delta^{11;2}(Q^{11})) \\ = \Delta^{11;2}(Q^{11})\vNtimes \Delta^{11;2}(Q^{11}).\end{multline*}Hence we can define \[P \rightarrow Q^{11}\vNtimes Q^{11},\quad x \mapsto (\Delta^{11;2}\otimes \Delta^{11;2})^{-1}\circ \Delta_P.\] Again the coassociativity relations show that the image of $P$ lands in the set \[\{x\in Q^{11}\vNtimes Q^{11}\mid (\Delta^{11;1}\otimes \id)(x) = (\id\otimes \Delta^{11;1})(x)\} \subseteq Q^{11}\vNtimes Q^{11}.\] However, it is well-known that this latter set is precisely $\Delta^{11;1}(Q^{11})$, see for example \cite[Theorem 2.7]{Vae01}. We obtain directly from this that $\Delta^{11;2}$ must indeed be an isomorphism between $Q^{11;1}$ and $P$, and the resulting isomorphism \[Q \cong P\oplus N\oplus O\oplus M\] clearly intertwines the respective comultiplications.
\end{proof}

\begin{Def} Let $(M,\Delta_M)$ be a quantum group von Neumann algebra, and $(N,\alpha_N)$ a right Galois object for $(M,\Delta_M)$. We call a quantum group von Neumann algebra $(P,\Delta_P)$ the \emph{reflection} of $(M,\Delta_M)$ across $(N,\alpha_N)$ if there exists a quantum groupoid von Neumann algebra $(Q,\Delta_Q)$ with $(Q^{11},\Delta^{11;1})\cong (P,\Delta_P)$,  $(Q^{22},\Delta^{22;2})\cong (M,\Delta_M)$ and $(Q^{12},\Delta^{12;2})\cong (N,\alpha_N)$.
\end{Def} 

It follows from the above discussion that $(P,\Delta_P)$ is uniquely determined up to isomorphism. 

We will in the following interchangingly use the index notation and the more personalized notation for the co-linking quantum groupoid von Neumann algebra associated to a Galois object $(N,\alpha_N)$, \[Q = Q^{11}\oplus Q^{12}\oplus Q^{21}\oplus Q^{22} = P\oplus N\oplus O\oplus M.\] 

Let us end with the following lemma which we will need later on. 

\begin{Lem}\label{LemLegBig} Let $(Q,\Delta_Q)$ be a co-linking quantum groupoid von Neumann algebra. Then for all $i,j,k$ one has that $Q^{ij}$ is the $\sigma$-weakly closed linear span of \begin{equation}\label{EqLeg2}\{(\id\otimes \omega)(\Delta^{ik;j}(x))\mid x\in Q^{ik}, \omega \in (Q^{kj})_*\}.\end{equation}Similarly, $Q^{ij}$ is the $\sigma$-weakly closed linear span of 
\begin{equation}\label{EqLeg1}\{(\omega\otimes \id)(\Delta^{kj;i}(x))\mid x\in Q^{kj}, \omega \in (Q^{ki})_*\}.\end{equation}
\end{Lem}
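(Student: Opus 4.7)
My approach reduces both statements of the lemma to Podle\'s-type density identities for the partial comultiplications, which in turn follow from the measured quantum groupoid structure of $(Q,\Delta_Q)$ afforded by Corollary~\ref{CorMQGr}. The standard density result for measured quantum groupoids (\cite{Les07,Eno08}) asserts that the $\sigma$-weakly closed linear span of $\Delta_Q(Q)\cdot(1\otimes Q)$, respectively $(Q\otimes 1)\cdot\Delta_Q(Q)$, exhausts the fiber product $Q \,{}_{b}\!\underset{\C^2}{*}\! {}_a\, Q$, which by the discussion preceding Theorem~\ref{TheoMUQGrd} is identified with $\bigoplus_{ijk}Q^{ij}\vNtimes Q^{jk}$. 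For $x\in Q^{i'j'}$ and $y\in Q^{lj'}$, the product $\Delta_Q(x)(1\otimes y)$ contributes to the fiber product only in the $(i',l,j')$-summand, where it equals $\Delta^{i'j';l}(x)(1\otimes y)\in Q^{i'l}\vNtimes Q^{lj'}$. Matching direct-sum summands and relabelling indices, I obtain for every triple $(i,j,k)$ the partial Podle\'s identity
\[
\overline{\Delta^{ik;j}(Q^{ik})\cdot(1\otimes Q^{jk})}^{\sigma w} = Q^{ij}\vNtimes Q^{jk}.
\]

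Given this, the first statement follows by a routine slicing argument. For $\omega\in(Q^{jk})_*$ and $b\in Q^{jk}$, writing $\omega_b(\,\cdot\,):=\omega(\,\cdot\, b)\in(Q^{jk})_*$ I have
\[
(\id\otimes\omega)\bigl(\Delta^{ik;j}(x)\cdot(1\otimes b)\bigr) = (\id\otimes\omega_b)\bigl(\Delta^{ik;j}(x)\bigr),
\]
so every right slice of an element of $\Delta^{ik;j}(Q^{ik})(1\otimes Q^{jk})$ is already a right slice of $\Delta^{ik;j}(Q^{ik})$ by a normal functional on $Q^{jk}$. Since $\id\otimes\omega\colon Q^{ij}\vNtimes Q^{jk}\to Q^{ij}$ is $\sigma$-weakly continuous and surjective for any faithful normal state $\omega$, applying $\id\otimes\omega$ to the partial Podle\'s identity produces the claimed $\sigma$-weak density in $Q^{ij}$. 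The second statement follows by the symmetric argument applied to the right-handed partial identity $\overline{(Q^{ki}\otimes 1)\cdot\Delta^{kj;i}(Q^{kj})}^{\sigma w}=Q^{ki}\vNtimes Q^{ij}$, extracted analogously from $\overline{(Q\otimes 1)\Delta_Q(Q)}^{\sigma w} = Q \,{}_{b}\!\underset{\C^2}{*}\! {}_a\, Q$ and sliced on the first leg.

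The main point requiring care is the passage from the global density of $\Delta_Q$ to the per-triple partial Podle\'s identity: \emph{a priori} one only obtains a sum over the middle index, and it is the explicit direct-sum description of the fiber product that allows the per-summand extraction. Once this bookkeeping is done the remainder of the argument is routine.
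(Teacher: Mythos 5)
Your reduction and slicing steps are correct: the per-summand extraction works because each generator $\Delta_Q(x)(1\otimes y)$ with $x\in Q^{ik}$, $y\in Q^{jk}$ is supported in the single summand $Q^{ij}\vNtimes Q^{jk}$ of the fiber product and cutting by the central projections $1^{ij}\otimes 1^{jk}$ is normal; and slicing the resulting per-triple identity by $\id\otimes\omega$ does give the statement for each \emph{fixed} middle index $k$, which is exactly where the naive argument (slicing a global density statement for $(\id\otimes\omega)\Delta_Q(x)$ inside $Q$) would only produce a sum over $k$. This is a genuinely different route from the paper, which never invokes the fiber product: there one uses that $Q^{ik}$ contains all slices $(\id\otimes\chi)(W^{ik;l})$ of the pseudo-multiplicative unitary \cite[Theorem 3.6]{Eno08}, the identity $(\Delta^{ik;j}\otimes\id)(W^{ik;l})=W^{ij;l}_{13}W^{jk;l}_{23}$, and unitarity of $W^{jk;l}$ to conclude that the set \eqref{EqLeg2} contains all slices of $W^{ij;l}$, which are $\sigma$-weakly total in $Q^{ij}$ by \cite[Theorem 3.8.(vii)]{Eno08}.

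The weak point is that your entire argument rests on the opening assertion that $\Delta_Q(Q)(1\otimes Q)$ has $\sigma$-weakly dense linear span in the fiber product, which you call ``standard'' and cite only as \cite{Les07,Eno08} without a statement number. This is the load-bearing step, and it is of essentially the same depth as the lemma itself: what is readily quotable from \cite{Eno08} is the density of slices of $W$ and of slices of $\Delta_Q$ (the latter, applied to $Q$, recovers only the sum-over-$k$ version of the lemma). The fiber-product Podle\'{s} density is true in this setting and can be derived from the unitarity of $\mWw$ together with its GNS characterization in Theorem \ref{TheoMUQGrd}, by the same bootstrap used for quantum group von Neumann algebras; but as written you have traded the lemma for an unproved claim of comparable strength. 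Supply either a precise reference or that derivation and the proof is complete.
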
 
\begin{proof} Let $l$ be an arbitrary index. Let us write $A^{ij}$ for the set in \eqref{EqLeg2}. By \cite[Theorem 3.6]{Eno08}, it follows that $Q^{ik}$ contains all elements of the form $(\id\otimes \chi)(W^{ik;l})$ for $\chi \in B(L^2(Q^{kl}),L^2(Q^{il}))_*$. Since \[(\Delta^{ik;j}\otimes \id)(W^{ik;l})  = W^{ij;l}_{13}W^{jk;l}_{23},\] it follows that $A$ contains all elements of the form $(\id\otimes \omega \otimes \chi)(W_{13}^{ij;l}W_{23}^{ik;l})$ for $\omega \in B(L^2(Q^{ik}))_*$ and $\chi \in B(L^2(Q^{kl}),L^2(Q^{il}))_*$. As $W^{ik;l}$ is a unitary, we obtain that \[A \supseteq \{(\id\otimes \omega)(W^{ij;l})\mid \omega \in B(L^2(Q^{jl}),L^2(Q^{il}))_*\}.\] As $l$ was arbitrary, it follows from \cite[Theorem 3.8.(vii)]{Eno08} that $A^{ij}$ is $\sigma$-weakly dense in $Q^{ij}$. 

The $\sigma$-weak density of the set in \eqref{EqLeg1} follows by symmetry.
\end{proof}

\section{Reduction of Galois objects}

\begin{Theorem}\label{TheoRed} Let $(M,\Delta_M)$ be a quantum group von Neumann algebra, and $(M_1,\Delta_{M_1})$ a quantum group von Neumann subalgebra. Let $(N,\alpha_N)$ be a right Galois object for $(M,\Delta_M)$, and denote \[N_1=\{x\in N\mid \alpha_N(x)\in N\otimes M_1\}.\] Then the restriction $\alpha_{N_1}$ of $\alpha_N$ to $N_1$ makes $(N_1,\alpha_{N_1})$ into a right Galois object for $(M_1,\Delta_{M_1})$. Moreover, the co-linking quantum groupoid von Neumann algebra associated to $(N_1,\alpha_{N_1})$ can be realized as a quantum groupoid von Neumann subalgebra of the co-linking quantum groupoid von Neumann algebra associated to $(N,\alpha_N)$.  \end{Theorem}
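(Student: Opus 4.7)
The plan is to split the proof into two phases: first verify that $(N_1,\alpha_{N_1})$ is an ergodic coaction of $(M_1,\Delta_{M_1})$, and then construct a sub co-linking quantum groupoid of the $(Q,\Delta_Q)$ associated to $(N,\alpha_N)$ whose $(1,2)$-corner realizes $(N_1,\alpha_{N_1})$. By the lemma preceding Corollary~\ref{CorMQGr}, this construction will simultaneously yield the Galois property of $(N_1,\alpha_{N_1})$ and the claimed embedding of groupoids.

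\emph{Coaction structure.} The set $N_1$ is the preimage of $N\vNtimes M_1$ under the normal $^*$-homomorphism $\alpha_N$, hence a $\sigma$-weakly closed unital $^*$-subalgebra of $N$. The crucial point is $\alpha_N(N_1)\subseteq N_1\vNtimes M_1$. Given $x\in N_1$ and $\omega\in(M_1)_*$, set $y=(\id\otimes\omega)\alpha_N(x)\in N$; coassociativity yields
\[
\alpha_N(y)=\bigl(\id\otimes(\id\otimes\omega)\Delta_M\bigr)\alpha_N(x).
\]
Since $\alpha_N(x)\in N\vNtimes M_1$ and $\Delta_M$ restricts to $\Delta_{M_1}:M_1\to M_1\vNtimes M_1$, the slicing map $(\id\otimes\omega)\Delta_M$ sends $M_1$ into $M_1$, so $\alpha_N(y)\in N\vNtimes M_1$ and $y\in N_1$. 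Varying $\omega$ and using $N_1\vNtimes M_1=(N_1\vNtimes B(L^2(M)))\cap(B(L^2(N))\vNtimes M_1)$ gives $\alpha_N(x)\in N_1\vNtimes M_1$; ergodicity of $\alpha_{N_1}$ is inherited from that of $\alpha_N$.

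\emph{Constructing the sub groupoid.} Let $(Q,\Delta_Q)$ have corners $P,N,O,M$, unitary antipode $R_Q$, and groupoid comultiplications $\Delta^{ij;k}_Q$. Theorem~\ref{TheoSubvn} gives $R^{22}(M_1)=M_1$, and the identity $\Delta_Q\circ R_Q=(R_Q\otimes R_Q)\circ\Delta_Q^{\opp}$ specialized to the $(1,2)$-corner implies
\[
R^{12}(N_1)=\{y\in O\mid \gamma_O(y)\in M_1\vNtimes O\}=:O_1.
\]
I then set $Q_1^{22}=M_1$, $Q_1^{12}=N_1$, $Q_1^{21}=O_1$, and
\[
Q_1^{11}=\{z\in N_1\vNtimes O_1\mid(\alpha_{N_1}\otimes\id)(z)=(\id\otimes\gamma_{O_1})(z)\},
\]
with $\gamma_{O_1}$ the restriction of $\gamma_O$ to $O_1$. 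Repeating the slicing argument of the first phase, together with the hybrid coassociativity identities~\eqref{EqJointCoass} and the invariance properties from Theorem~\ref{TheoSubvn}, each of the eight restricted maps $\Delta^{ij;k}_Q|_{Q_1^{ij}}$ lands in $Q_1^{ik}\vNtimes Q_1^{kj}$.

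\emph{Invariant weights and conclusion.} It remains to equip each $Q_1^{ij}$ with invariant nsf weights. On $M_1$ we take the Haar weights $\varphi_{M_1},\psi_{M_1}$ of $(M_1,\Delta_{M_1})$. By uniqueness of invariant weights on Galois objects \cite[Theorem~4.19]{DeC11}, it suffices to produce one left invariant nsf weight $\varphi_{N_1}$ on $N_1$ relative to $\alpha_{N_1}$; the weights on the remaining corners are then determined. The natural candidate is $\varphi_{N_1}(x)=\varphi_{M_1}\bigl((\id\otimes T)\alpha_N(x)\bigr)$, where $T:M^+\to\widehat{M_1^+}$ is the normal faithful operator-valued weight associated with the quantum subgroup structure $M_1\subseteq M$. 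Once $\varphi_{N_1}$ is in place, the restriction of $\mathcal{W}$ to the natural sub-Hilbert spaces $L^2(N_1)\otimes L^2(M_1)\hookrightarrow L^2(N)\otimes L^2(M)$ yields a unitary onto $L^2(N_1)\otimes L^2(N_1)$, by the intertwining relation~\eqref{ActAlphN} and the hybrid pentagon. The main obstacle is precisely this last step: producing $T$ from modular theory and verifying the Hilbert-space compatibility so that the restricted Galois map is genuinely unitary. Once these technicalities are handled, Corollary~\ref{CorMQGr} and the lemma preceding it complete the proof of both statements.
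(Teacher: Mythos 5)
Your first two phases track the paper's proof closely: the stability $\alpha_N(N_1)\subseteq N_1\vNtimes M_1$, the corners $O_1=R_N(N_1)$ and $\tilde P_1\subseteq N_1\vNtimes O_1$, the use of Theorem~\ref{TheoSubvn} to control the restricted comultiplications, and your opening remark that the Galois property should follow from the lemma preceding Corollary~\ref{CorMQGr} (a corner of a co-linking quantum groupoid is automatically a Galois object, via Theorem~\ref{TheoMUQGrd}) are all exactly right. The gap is in your third phase. Your candidate weight $\varphi_{N_1}=\varphi_{M_1}\circ(\id\otimes T)\circ\alpha_N$ hinges on a normal faithful operator-valued weight $T$ from $M$ to $M_1$; for a quantum group von Neumann subalgebra one only knows $R(M_1)=M_1$ and $\tau_t(M_1)=M_1$, not that $\sigma_t^{\varphi}$ preserves $M_1$, so by Haagerup's criterion such a $T$ need not exist --- and it is not needed: by ergodicity of $\alpha_{N_1}$, the element $(\id\otimes\varphi_{M_1})\alpha_{N_1}(x)$ is already a scalar for $x\in N_1^+$ and defines $\varphi_{N_1}$ directly. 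What your proposal does not address is the \emph{semi-finiteness} of this weight, which the paper obtains by showing $\beta_M(M_1)\subseteq O_1\vNtimes N_1$ and checking that the elements $(\omega\otimes\id)\beta_{M_1}(x)$, $x\in\mathscr{M}_{\varphi_{M_1}}^+$, are integrable. Note also that invoking uniqueness of invariant weights on the Galois object $(N_1,\alpha_{N_1})$ to pin down the remaining corner weights is circular at this stage, since the Galois property is precisely what is being proved.

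Your closing step --- that the restriction of $\Ww$ to $L^2(N_1)\otimes L^2(M_1)$ is a unitary onto $L^2(N_1)\otimes L^2(N_1)$ --- is the Galois property in disguise, and neither \eqref{ActAlphN} nor the hybrid pentagon yields the required surjectivity; moreover $L^2(N_1)$, the GNS space of a weight built from $\varphi_{M_1}$, does not canonically embed into $L^2(N)$, built from $\varphi_M$, so ``restriction of $\Ww$'' is not even well defined. The paper never attempts this. Instead it verifies the invariance axioms \eqref{EqLeftInvG} and \eqref{EqRightInvG} on all four corners of $Q_1$: it first shows that $P_1$ is a quantum group von Neumann subalgebra of $P$ via Theorem~\ref{TheoSubvn}, which supplies Haar weights $\varphi_{P_1},\psi_{P_1}$, then sets $\psi_{N_1}=(\psi_{P_1}\otimes\id)\gamma_{N_1}$ and $\varphi_{O_1}=\psi_{N_1}\circ R_{Q_1}$, and reduces by symmetry to checking invariance with respect to $\beta_{M_1}$ and $\gamma_{N_1}$. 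Only then does unitarity of the Galois map of $(N_1,\alpha_{N_1})$ come for free from Theorem~\ref{TheoMUQGrd}. To complete your argument you should replace the last paragraph by these weight-invariance verifications rather than by a direct analysis of a restricted operator.
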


In particular, the reflection $(P_1,\Delta_1)$ of $(M_1,\Delta_{M_1})$ across $(N_1,\alpha_{N_1})$ is naturally a quantum group von Neumann subalgebra of the reflection $(P,\Delta_P)$ of $(M,\Delta_M)$ across $(N,\alpha_N)$. 

\begin{proof}  First note that $\alpha_{N_1}$ is a right coaction on $N_1$. Indeed, for $x\in N_1$ and $\omega\in M_*$, we have that \[\alpha_N((\id\otimes \omega)\alpha_{N_1}(x))= (\id\otimes \id\otimes \omega)((\id\otimes \Delta_{M_1})\alpha_N(x))\in N\otimes M_1.\] Hence \[\alpha_{N_1}(N_1) \subseteq N_1\vNtimes M_1.\] Since $\alpha_N$ is a coaction and $\Delta_M$ restricts to $\Delta_{M_1}$ on $M_1$, we have that $\alpha_{N_1}$ is a right coaction of $(M_1,\Delta_{M_1})$ on $N_1$. 

Let us use the notation for the co-linking quantum groupoid von Neumann algebra $(Q,\Delta_Q)$ associated to $(N,\alpha_N)$ as in Section \ref{SecLinCol}. Denote $O_1= R_N(N_1)$. Since \[\gamma_O\circ R_N = (R_M\otimes R_N)\circ \alpha_N^{\textrm{op}},\] and $R_M(M_1)=M_1$ by the easy direction in Theorem \ref{TheoSubvn}, we can also characterize $O_1$ as \[O_1=\{z\in O \mid \gamma_O(z)\in M_1\vNtimes O\}.\] Now denote \[\tilde{P}_1=\{z\in N_1\vNtimes O_1\mid (\alpha_N\otimes \id)(z)=(\id\otimes \gamma_O)(z)\},\] and put \[ P_1= \beta_P^{-1}(\tilde{P}_1)\subseteq P.\] Then \[\Delta_{P}(P_1)\subseteq P_1\vNtimes P_1.\] Indeed: applying $\beta_P\otimes \beta_P$ to $\Delta_P(z)$ for $z\in P_1$, and using that \[(\beta_P\otimes \beta_P)\Delta_P = ((\id\otimes \beta_M)\alpha_N\otimes \id),\] we see that \[((\beta_P\otimes \beta_P)\Delta_P)(z) \in N_1\otimes \beta_M(M_1)\vNtimes O_1,\] so we should only check if $\beta_M(M_1)\in O_1\vNtimes N_1$. Since \[(\id\otimes \alpha_N)\beta_M = (\beta_M\otimes \id)\Delta_M,\qquad (\gamma_O\otimes \id)\beta_M = (\id\otimes \beta_M)\Delta_M,\] this condition is fulfilled. It is further also easy to check that we have \[R_P(P_1)\subseteq P_1,\qquad \tau_t^{P}(P_1)\subseteq P_1\] as well, using the commutations between the $\Delta^{ij;k}$, $R_Q$ and $\tau_t^{Q}$, and the fact that $R_M(M_1)=M_1$ and $\tau_{M}(M_1)= M_1$. Using the other direction in Theorem \ref{TheoSubvn}, we conclude that $(P_1,\Delta_{P_1})$ is a quantum group von Neumann subalgebra of $(P,\Delta_{P})$, and in particular is a quantum group von Neumann algebra.

Note now that $\alpha_{N_1}$ is clearly ergodic. We show that it is integrable. By ergodicity, we have a faithful normal weight  on $N_1$ determined by \[\varphi_{N_1}(x)1=(\id\otimes \varphi_{M_1})\alpha_{N_1}(x),\quad x\in N_1.\] We want to show that it is semi-finite. Take $x\in \mathscr{M}_{\varphi_{M_1}}^+$ and $\omega\in \mathcal{S}_*(O_1)$. Then by left invariance of $\varphi_{M_1}$, \begin{eqnarray*}  \varphi_{N_1}((\omega\otimes \id)\beta_{M_1}(x)) &=& ( \id\otimes \varphi_{M_1})((((\omega\otimes \id)\beta_{M_1})\otimes \id)\Delta_{M_1}(x)) \\ &=& \varphi_{M_1}(x) ,\end{eqnarray*} so that $(\omega\otimes \id)\beta_{M_1}(x)$ is integrable for $\varphi_{N_1}$. From this, the semi-finiteness of $\varphi_{N_1}$ follows.\\

We now want to show that $(N_1,\alpha_{N_1})$ is a Galois object. We do this by already constructing the associated co-linking von Neumann algebraic quantum groupoid.

Denote \[Q_1= P_1\oplus O_1\oplus N_1\oplus M_1\subseteq Q.\] It is again easy to check that $\Delta_{Q}(Q_1)\subseteq Q_1\otimes Q_1$, and that $R_{Q}(Q_1)\subseteq Q_1$. Denote by $\Delta_{Q_1}$ the restriction of $\Delta_Q$ to $Q_1$, and by $R_{Q_1}$ the restriction of $R_{Q}$ to $Q_1$. Denote by $\gamma_{N_1}$ the associated coaction $N_1\rightarrow P_1\otimes N_1$ of $P_1$. By symmetry, also $\gamma_{N_1}$ is an ergodic integrable coaction. Denote \[\psi_{N_1}=(\psi_{P_1}\otimes \id)\gamma_{P_1},\qquad \varphi_{O_1}= \psi_{N_1}\circ R_{Q_1}.\] We want to check that the collection $\varphi_{P_1},\varphi_{O_1},\varphi_{N_1}$ and $\varphi_{M_1}$ satisfies the conditions for left invariant nsf weights on a co-linking von Neumann algebraic quantum groupoid. In fact, apart from trivial cases, symmetry allows us to reduce to two cases, namely the left invariance of the weights with respect to $\beta_{M_1}$ and $\gamma_{N_1}$. For $\beta_{M_1}$, the argument has already been given when discussing integrability of $\alpha_{N_1}$. To prove invariance with respect to $\gamma_{N_1}$, choose $\omega\in \mathcal{S}_*(P_1)$, $\tilde{\omega} \in \mathcal{S}_*(N)$ and $x\in \mathscr{M}_{\varphi_{N_1}}^+$. Then \begin{eqnarray*} \varphi_{N_1}((\omega\otimes \id)\gamma_{N_1}(x)) &=& \varphi_{M_1}((\omega\otimes \tilde{\omega}\otimes \id)((\id\otimes \alpha_{N_1})\gamma_{N_1}(x))) \\ &=& \varphi_{M_1}((((\omega\otimes \tilde{\omega})\gamma_{N_1})\otimes \id)\alpha_{N_1}(x)) \\ &=& ((\omega\otimes \tilde{\omega})\gamma_{N_1})(1)\cdot \varphi_{N_1}(x)\\ &=&  \varphi_{N_1}(x).\end{eqnarray*}

Since $R_{Q_1}$ is an anti-multiplicative $^*$-involution flipping the comultiplication, $Q_1$ has the structure of a co-linking quantum groupoid von Neumann algebra. In particular, $(N_1,\alpha_{N_1})$ is a Galois object. By uniqueness, $(Q_1,\Delta_{Q_1})$ will then be a realization of the co-linking quantum groupoid von Neumann algebra associated to $(N_1,\alpha_{N_1})$.
\end{proof}

\begin{Def} In the situation of the above theorem, we call $(N_1,\alpha_{N_1})$ the \emph{reduction} of $(N,\alpha_N)$ to $(M_1,\Delta_{M_1})$. 
\end{Def} 

The following theorem proves a uniqueness property for the reduction of a Galois object. 

\begin{Theorem}\label{TheoRedUn} Let $(N,\alpha)$ be a right Galois object for the quantum group von Neumann algebra $(M,\Delta)$, and let $(M_1,\Delta_1)$ be a quantum group von Neumann subalgebra of $(M,\Delta)$. Let $N_1 \subseteq N$ be a von Neumann subalgebra on which $\alpha$ restricts to a coaction $\alpha_1: N_1\rightarrow N_1\vNtimes M_1$ making $(N_1,\alpha_1)$ into a right $(M_1,\Delta_1)$-Galois object. Then $(N_1,\alpha_1)$ equals the reduction of $(N,\alpha)$ to $(M_1,\Delta_1)$.
\end{Theorem}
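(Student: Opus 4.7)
The plan is to set $\widetilde{N}_1 := \{x \in N \mid \alpha(x) \in N \vNtimes M_1\}$, the reduction of Theorem \ref{TheoRed}, and to prove $N_1 = \widetilde{N}_1$. One inclusion is immediate: if $x \in N_1$, then $\alpha(x) = \alpha_1(x) \in N_1 \vNtimes M_1 \subseteq N \vNtimes M_1$, so $x \in \widetilde{N}_1$. For the reverse, I would compare the co-linking quantum groupoid structures attached to the two $(M_1,\Delta_1)$-Galois objects $(N_1,\alpha_1)$ (by hypothesis) and $(\widetilde{N}_1,\alpha|_{\widetilde{N}_1})$ (by Theorem \ref{TheoRed}), and then invoke Lemma \ref{LemLegBig}.

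First I would match the invariant structures. Both left-invariant nsf weights $\varphi_{N_1}$ and $\varphi_{\widetilde{N}_1}$ are characterised by $(\id \otimes \varphi_{M_1})\alpha(x) = \varphi(x)\cdot 1$; since $\alpha_1 = \alpha|_{N_1}$, they coincide on $N_1$ after a normalisation, yielding a canonical isometric inclusion $L^2(N_1) \hookrightarrow L^2(\widetilde{N}_1)$ of standard forms. By uniqueness of the Connes cocycle, the right-invariant weights match likewise, and hence so do the modular conjugations: writing $O_1 := R_N(N_1) \subseteq R_N(\widetilde{N}_1) = \widetilde{O}_1$, one obtains $J_{\widetilde{N}_1}|_{L^2(N_1)} = J_{N_1}$ and $\hat{J}_{\widetilde{N}_1}|_{L^2(N_1)} = \hat{J}_{N_1}$. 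The defining relation for the Galois unitary then forces $\widetilde{\mathcal{W}}_1|_{L^2(N_1) \otimes L^2(M_1)} = \mathcal{W}_{N_1}$, with image $L^2(N_1) \otimes L^2(N_1) \subseteq L^2(\widetilde{N}_1) \otimes L^2(\widetilde{N}_1)$.

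The crucial step, and the main obstacle, will be to deduce that the external comultiplication $\beta_{M_1}: M_1 \to \widetilde{O}_1 \vNtimes \widetilde{N}_1$ of the reduction — given by the formula in Lemma \ref{LemExtCom} in terms of $\widetilde{\mathcal{W}}_1$, $J_{M_1}$, $J_{\widetilde{N}_1}$ and $\hat{J}_{\widetilde{N}_1}$ — in fact factors through $O_1 \vNtimes N_1$. I would substitute the above restrictions into that formula: since every factor respects $L^2(N_1) \subseteq L^2(\widetilde{N}_1)$ (and correspondingly $L^2(O_1) \subseteq L^2(\widetilde{O}_1)$), the operator $\beta_{M_1}(m)$ preserves $L^2(O_1) \otimes L^2(N_1)$ and its restriction there coincides with the external comultiplication built directly from $(N_1,\alpha_1)$, forcing $\beta_{M_1}(m) \in O_1 \vNtimes N_1$.

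Given this factorization, the theorem follows by applying Lemma \ref{LemLegBig} to the co-linking quantum groupoid of $(\widetilde{N}_1,\alpha|_{\widetilde{N}_1})$ with indices $(i,j,k)=(1,2,2)$: the algebra $\widetilde{N}_1$ is the $\sigma$-weakly closed linear span of the slices $(\omega \otimes \id)\beta_{M_1}(m)$ with $m \in M_1$ and $\omega \in (\widetilde{O}_1)_*$. But $\beta_{M_1}(m) \in O_1 \vNtimes N_1$ together with $O_1 \subseteq \widetilde{O}_1$ yields $(\omega \otimes \id)\beta_{M_1}(m) = (\omega|_{O_1} \otimes \id)\beta_{M_1}(m) \in N_1$, so $\widetilde{N}_1 \subseteq N_1$. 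The delicate part throughout is the bookkeeping verifying that all of the anti-unitary and modular data of the larger Galois object genuinely restrict to their counterparts for the smaller one; once this is carefully done, the factorization $\beta_{M_1}(M_1) \subseteq O_1 \vNtimes N_1$ emerges essentially tautologically from the uniqueness of the construction in Lemma \ref{LemExtCom}.
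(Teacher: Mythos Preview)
Your outline has a genuine gap at exactly the point you flag as ``the crucial step.'' From the fact that $\beta_{M_1}(m) \in \widetilde{O}_1 \vNtimes \widetilde{N}_1$ preserves the subspace $L^2(O_1)\otimes L^2(N_1)$ and restricts there to the external comultiplication of the smaller groupoid, it does \emph{not} follow that $\beta_{M_1}(m) \in O_1 \vNtimes N_1$. An operator in a von Neumann algebra can leave a closed subspace invariant without belonging to any prescribed subalgebra; concretely, if $N_1 \subsetneq \widetilde{N}_1$, the projection onto $L^2(N_1)$ already commutes with $O_1 \vNtimes N_1$ but certainly does not force operators preserving its range into that subalgebra. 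To make your argument work you would need the subspace $L^2(O_1)\otimes L^2(N_1)$ to be separating for $\widetilde{O}_1 \vNtimes \widetilde{N}_1$, which is essentially equivalent to what you are trying to prove. There are also earlier soft spots: to get $J_{\widetilde{N}_1}|_{L^2(N_1)} = J_{N_1}$ you need $\sigma_t^{\varphi_{\widetilde{N}_1}}$ to preserve $N_1$, and to match right-invariant weights via Connes cocycles you need $\delta_{\widetilde{N}_1}^{it} \in N_1$ --- neither is obvious without already knowing $N_1 = \widetilde{N}_1$.

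The paper's route avoids all this bookkeeping. It isolates the following lemma: if $(N,\alpha_N)$ is a right Galois object for $(M,\Delta_M)$ and $N_1 \subseteq N$ is a von Neumann subalgebra on which $\alpha_N$ restricts to a coaction $N_1 \to N_1 \vNtimes M$ making $(N_1,\alpha_1)$ a Galois object for the \emph{same} $(M,\Delta_M)$, then $N_1 = N$ (this is \cite[Proposition 5.13]{Eno12}). Theorem~\ref{TheoRedUn} then follows in one line: by Theorem~\ref{TheoRed} the reduction $\widetilde{N}_1$ is an $(M_1,\Delta_1)$-Galois object, your easy inclusion gives $N_1 \subseteq \widetilde{N}_1$, and applying the lemma with $(\widetilde{N}_1,M_1)$ in place of $(N,M)$ yields $N_1 = \widetilde{N}_1$. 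So the correct reduction is not to compare two co-linking groupoids, but to observe that a Galois sub-object for the same quantum group must be everything.
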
 

This theorem follows immediately from Theorem \ref{TheoRed} and the following lemma. 

\begin{Lem} Let $(N,\alpha_N)$ be a right Galois object for $(M,\Delta_M)$, and let $N_1\subseteq N$ be a von Neumann subalgebra to which $\alpha_N$ restricts as a coaction \[\alpha_1: N_1\rightarrow N_1\vNtimes M.\] If $(N_1,\alpha_{N_1})$ is a Galois object, then $N_1 = N$. 
\end{Lem}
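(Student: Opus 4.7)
The goal is to establish $L^{2}(N_{1})=L^{2}(N)$ as subspaces of $L^{2}(N)$, which, combined with the inclusion $N_{1}\subseteq N$ and the $\sigma$-weak closedness of $N_{1}$, yields $N_{1}=N$.

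\emph{Weight compatibility.} Because $\alpha_{N_{1}}$ is the restriction of the ergodic coaction $\alpha_{N}$, both are ergodic with left-invariant weights characterized by the formulas $\varphi_{N}(x)\cdot 1=(\id\otimes\varphi)\alpha_{N}(x)$ and $\varphi_{N_{1}}(x)\cdot 1=(\id\otimes\varphi)\alpha_{N_{1}}(x)$. Restricting the first to $x\in N_{1}^{+}$ shows $\varphi_{N_{1}}=\varphi_{N}|_{N_{1}}$, so the assignment $\Lambda_{N_{1}}(x)\mapsto\Lambda_{N}(x)$ extends to a canonical isometric embedding $L^{2}(N_{1})\hookrightarrow L^{2}(N)$. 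Write $p$ for the corresponding orthogonal projection.

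\emph{Galois-unitary comparison.} I would verify that, by the uniqueness of the Galois unitary and the agreement of the GNS-maps and of the coactions on $N_{1}$, the restriction of $\mathcal{W}$ to $L^{2}(N_{1})\otimes L^{2}(M)$ satisfies the defining relations of $\mathcal{W}_{1}$; hence the two coincide. In particular $\mathcal{W}$ restricts to a unitary $L^{2}(N_{1})\otimes L^{2}(M)\to L^{2}(N_{1})\otimes L^{2}(N_{1})$, giving the key identity
\[
\mathcal{W}(p\otimes 1)\mathcal{W}^{*}\;=\;p\otimes p.
\]
Conjugating the trivial commutation $[(p\otimes 1),(1\otimes y)]=0$ by $\mathcal{W}$ and using \eqref{EqGalImpCross2} yields $[p,\hat{\pi}'(y)]=0$ for every $y\in\hat{M}'$, while $p\in N_{1}'$ holds by the Jones-projection property ($L^{2}(N_{1})$ is invariant under left multiplication by $N_{1}$).

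\emph{Type I factor reduction and conclusion.} Since $(N_{1},\alpha_{N_{1}})$ is a Galois object, Corollary~\ref{CorTypI} identifies $A:=\pi_{\rtimes}(N_{1}\rtimes M)=N_{1}\vee\hat{\pi}'(\hat{M}')$ with $B(L^{2}(N_{1}))$; this is a unital type I subfactor of $B(L^{2}(N))\cong N\rtimes M$, so we obtain a tensor splitting $L^{2}(N)\cong L^{2}(N_{1})\otimes K$ with $A'\cong 1\otimes B(K)$, and $p$ is realized as $1\otimes |\xi_{0}\rangle\langle\xi_{0}|$ for a unit vector $\xi_{0}\in K$. The remaining, and hardest, step is to force $\dim K=1$. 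The plan is to compress $\alpha_{N}$: for $x\in N$,
\[
(p\otimes 1)\alpha_{N}(x)(p\otimes 1)\;=\;\mathcal{W}^{*}(p\otimes pxp)\mathcal{W},
\]
and then, via the hybrid pentagon \eqref{EqHybPent} together with the density statement of Lemma~\ref{LemLegBig} applied to the ambient co-linking quantum groupoid of $(N,\alpha_{N})$, to conclude that $p$ commutes with every such $\alpha_{N}(x)$, hence with all of $N$. Combined with $p\in\hat{\pi}'(\hat{M}')'$ and the factoriality of $N\vee\hat{\pi}'(\hat{M}')=B(L^{2}(N))$, this forces $p\in\mathbb{C}$, and since $p\neq 0$, $p=1$, so $L^{2}(N_{1})=L^{2}(N)$ and hence $N_{1}=N$. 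The main obstacle is rigorously carrying out the compression argument that yields $p\in N'$; everything else is a matter of assembling already recalled results.
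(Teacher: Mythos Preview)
The paper itself does not prove this lemma directly: it simply invokes \cite[Proposition 5.13]{Eno12}. Your attempt at a self-contained argument is therefore a genuinely different route, and its architecture is sound: the weight restriction, the identification of $\mathcal{W}_1$ with the restriction of $\mathcal{W}$, the resulting identity $\mathcal{W}(p\otimes 1)\mathcal{W}^{*}=p\otimes p$, and the conclusion via $p\in (N\vee\hat\pi'(\hat M'))'=\C$ are all correct and well-motivated.

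The genuine gap is precisely where you flag it: deducing $p\in N'$. Your ``compression'' formula $(p\otimes 1)\alpha_N(x)(p\otimes 1)=\mathcal{W}^{*}(p\otimes pxp)\mathcal{W}$ is true but does not lead anywhere, and invoking the hybrid pentagon plus Lemma~\ref{LemLegBig} in the vague way you describe does not produce the commutation. The clean fix is to slice, not compress. From $\mathcal{W}(p\otimes 1)=(p\otimes p)\mathcal{W}$ one gets $(p^{\perp}\otimes 1)\mathcal{W}(p\otimes 1)=0$; applying $(\id\otimes\chi)$ for $\chi\in B(L^{2}(M),L^{2}(N))_{*}$ yields $p^{\perp}n_{\chi}p=0$ for every second-leg slice $n_{\chi}:=(\id\otimes\chi)(\mathcal{W})\in N$. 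These slices span $N$ $\sigma$-weakly: if $\omega\in N_{*}$ kills all $n_{\chi}$ then $(\omega\otimes\id)(\mathcal{W})=0$, hence by the defining formula of $\mathcal{W}^{*}$ one finds $(\bar\omega\otimes\id)\alpha_N(x)=0$ for all $x\in N$, and the density statement \eqref{EqLeg2} of Lemma~\ref{LemLegBig} (case $i=1,j=2,k=2$) then forces $\bar\omega=0$. Thus $p^{\perp}Np=0$, and since $N=N^{*}$ also $pNp^{\perp}=0$, giving $p\in N'$. You should also justify the very last implication: $p=1$ gives $\dim K=1$, hence $\pi_{\rtimes}(N_1\rtimes M)=B(L^2(N))=\pi_{\rtimes}(N\rtimes M)$, so $N_1\rtimes M=N\rtimes M$, and comparing the $\hat\alpha$-fixed points $\alpha(N_1)=\alpha(N)$ yields $N_1=N$; the bare statement ``$L^{2}(N_1)=L^{2}(N)$ hence $N_1=N$'' is not enough on its own.
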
 

\begin{proof} This is a special case of \cite[Proposition 5.13]{Eno12}.
\end{proof}

As an application we show that one can completely determine the co-linking quantum groupoid von Neumann algebra of a Galois object once one knows those for a generating collection of quantum group von Neumann subalgebras of the coacting quantum group von Neumann algebra.

\begin{Theorem}\label{CorGenParts} Let $(N,\alpha_N)$ be a right Galois object for $(M,\Delta_M)$ with reflection $(P,\Delta_P)$. Assume that $\{(M_i,\Delta_{M_i})\mid i = 1,2,\ldots,m\}$ are a collection of quantum group von Neumann subalgebras of $(M,\Delta_M)$ such that $M$ is the $\sigma$-weak closure of the linear span of $M_{1}M_2\cdots M_m$. Then with $(N_i,\alpha_{N_i})$ the reduction of $(N,\alpha_N)$ with respect to $(M_i,\Delta_{M_i})$, and $(P_i,\Delta_{P_i})$ the reflection of $(M_i,\Delta_{M_i})$ across $(N_i,\alpha_{N_i})$ considered as quantum group von Neumann subalgebra of $(P,\Delta_P)$, one has that  $N$, resp. $P$, is the $\sigma$-weak closure of the linear span of the $N_1\cdots N_m$, resp. $P_1\cdots P_m$. 
\end{Theorem}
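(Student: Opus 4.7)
The plan is to work inside the co-linking quantum groupoid von Neumann algebra $(Q,\Delta_Q)=P\oplus N\oplus O\oplus M$ associated to $(N,\alpha_N)$. By Theorem~\ref{TheoRed} each reduction $(N_i,\alpha_{N_i})$ is the $(1,2)$-corner of a sub-quantum-groupoid $(Q_i,\Delta_{Q_i})=P_i\oplus N_i\oplus O_i\oplus M_i$ of $(Q,\Delta_Q)$; in particular the external comultiplication $\beta_M:M\to O\vNtimes N$ restricts on each $M_i$ to $\beta_{M_i}:M_i\to O_i\vNtimes N_i$, and the left $P$-coaction $\gamma_N:N\to P\vNtimes N$ restricts on each $N_i$ to $\gamma_{N_i}:N_i\to P_i\vNtimes N_i$. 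Lemma~\ref{LemLegBig}, applied to $Q^{12}=N$ via $\Delta^{22;1}=\beta_M$ and to $Q^{11}=P$ via $\Delta^{12;1}=\gamma_N$, yields the two leg-density statements: $N$ equals the $\sigma$-weakly closed linear span of $\{(\omega\otimes\id)\beta_M(x):x\in M,\,\omega\in O_*\}$, and $P$ equals the $\sigma$-weakly closed linear span of $\{(\id\otimes\omega)\gamma_N(x):x\in N,\,\omega\in N_*\}$.

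For $N$: the normality of $\beta_M$ combined with the hypothesis that $M$ is the $\sigma$-weakly closed linear span of $M_1M_2\cdots M_m$ gives that $\beta_M(M)$ is contained in the $\sigma$-weakly closed linear span of $\beta_{M_1}(M_1)\,\beta_{M_2}(M_2)\cdots\beta_{M_m}(M_m)$ inside $O\vNtimes N$. The crux is then the following slice lemma: for $z_i\in O_i\vNtimes N_i$ and $\omega\in O_*$, the element $(\omega\otimes\id)(z_1z_2\cdots z_m)$ lies in the $\sigma$-weakly closed linear span of $N_1N_2\cdots N_m$. On elementary tensors $z_i=a_i\otimes b_i$ this is immediate, since the product collapses to $(a_1\cdots a_m)\otimes(b_1\cdots b_m)$ whose slice is $\omega(a_1\cdots a_m)\,b_1b_2\cdots b_m\in N_1N_2\cdots N_m$. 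The extension to general $z_i$ follows by $\sigma$-strong Kaplansky approximation of $O_i\vNtimes N_i$ by its algebraic tensor product, joint $\sigma$-strong continuity of multiplication on bounded sets, and $\sigma$-weak continuity of $\omega\otimes\id$. Combining this with the leg-density formula yields $N\subseteq$ the $\sigma$-weakly closed linear span of $N_1N_2\cdots N_m$; the reverse inclusion is obvious.

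For $P$: the argument is formally identical. From the just-proved identity for $N$, the normality of $\gamma_N$ with $\gamma_N|_{N_i}=\gamma_{N_i}$, and the analogous slice observation (with the roles of the two legs swapped: $P_i$ in the first leg, $N_i$ in the second, and $\omega\in N_*$), one concludes that $P$ equals the $\sigma$-weakly closed linear span of $P_1P_2\cdots P_m$. The main delicate point of the whole argument is the slice lemma, since one must propagate the elementary-tensor identity through products in a tensor product of von Neumann algebras while carefully distinguishing between joint and separate continuity of multiplication in the relevant topologies. The remainder is a direct assembly of Lemma~\ref{LemLegBig}, the restriction property of the co-linking structure maps to the sub-quantum-groupoids provided by Theorem~\ref{TheoRed}, and the given generation hypothesis on $M$.
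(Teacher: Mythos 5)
Your argument is correct and follows essentially the same route as the paper's own proof: Lemma \ref{LemLegBig} gives the leg-density of slices of $\beta_M$ (resp.\ $\gamma_N$), Theorem \ref{TheoRed} gives the compatibility $\beta_M|_{M_i}=\beta_{M_i}$ (resp.\ $\gamma_N|_{N_i}=\gamma_{N_i}$), and the generation hypothesis is propagated through an approximation of products by elementary tensors. The paper leaves the Kaplansky/continuity step implicit where you state it as an explicit slice lemma, but the content is the same.
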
 

\begin{proof}
It follows from Lemma \ref{LemLegBig} that $N$ is the $\sigma$-weak closed linear span of \[ A  =\{(\omega\otimes \id)(\beta_M(x))\mid \omega \in O_*,x\in M\}.\] But by Theorem \ref{TheoRed} and the hypothesis, it follows that $A$ is contained in the $\sigma$-weak closed linear span of $A_1\cdots A_m$ with \[ A_i  =\{(\omega\otimes \id)(\beta_{M_i}(x))\mid \omega \in (O_i)_*,x\in M_i\}.\] As the $A_i$ are $\sigma$-weakly dense in $N_i$, the first part of the theorem follows. 

The statement for $P$ follows similarly, replacing $M$ by $N$, $\beta_M$ by $\gamma_N$ and looking at right slices. 
\end{proof}

\section{$I$-factorial Galois objects}

We now come to the main new topic of this paper: Galois objects which define \emph{at the same time} a projective corepresentation, that is, Galois object structures on $B(\Hsp)$ for some Hilbert space $\Hsp$. In the Hopf algebra context, such Galois objects were introduced in \cite[Definition 5.1]{AEGN02}.

\begin{Def} We call \emph{$I$-factorial (right) Galois object} any right Galois object $(N,\alpha)$ with $N$ a type $I$-factor.
\end{Def}

Our main theorem is the following \emph{duality statement}. Recall the notion of adjoint coaction from Definition \ref{DefAdj}.

\begin{Theorem}\label{TheoDualMain} If $(N,\alpha)$ is a $I$-factorial Galois object for $(M,\Delta)$, then $(N,\Ad_{\alpha})$ is a $I$-factorial right Galois object for $(\hat{M},\hat{\Delta})$. Moreover, \[\Ad_{\Ad_{\alpha}} = \alpha.\]
\end{Theorem}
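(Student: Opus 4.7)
By Corollary \ref{CorTypI}, since $N$ is already a type $I$-factor, it suffices to verify that $\Ad_\alpha$ is ergodic and integrable and that the crossed product $N\rtimes_{\Ad_\alpha}\hat{M}$ is a type $I$-factor. Ergodicity is the quickest step: if $\Ad_\alpha(x) = x \otimes 1$, then $(x \otimes 1)\Ww = \Ww(x \otimes 1)$ as operators on $L^2(N) \otimes L^2(M)$. Writing $\Ww = W^{12;2}$ inside the co-linking quantum groupoid $(Q,\Delta_Q)$ associated to $(N,\alpha)$, Lemma \ref{LemLegBig} tells us the first leg of $\Ww$ is $\sigma$-weakly dense in $Q^{12}=N$, so $x\in Z(N)=\C$.

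For integrability and the type $I$-factoriality of the crossed product, my plan is to appeal to the duality theory for measured quantum groupoids applied to $(Q,\Delta_Q)$: by Corollary \ref{CorMQGr} this groupoid satisfies the axioms of \cite{Eno08} and consequently admits a dual measured quantum groupoid $(\hat{Q},\hat{\Delta}_{\hat Q})$ on the same basis $\C^2$. The crucial point — which is available precisely because $N$ is a type $I$-factor — is that $\hat{Q}$ is itself a co-linking quantum groupoid, with $\hat{Q}^{22}\cong\hat{M}$, $\hat{Q}^{11}\cong\hat{P}$, and an off-diagonal corner $\hat{Q}^{12}$ canonically identified with $N$ in such a way that the restriction of the dual comultiplication recovers $\Ad_\alpha$. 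Granting this identification, the general principle that off-diagonal corners of a co-linking quantum groupoid are Galois objects for the diagonal corners (the lemma preceding Corollary \ref{CorMQGr}) immediately yields that $(N,\Ad_\alpha)$ is a right Galois object for $(\hat{M},\hat{\Delta})$, delivering the two outstanding properties. The main technical obstacle is precisely this identification of $\hat{Q}^{12}$ with $N$ carrying $\hat\Delta^{12;2}$ to $\Ad_\alpha$; it comes down to matching slices of the dual partial multiplicative isometry of $\hat{Q}$ against $\Ww^*(\,\cdot\,\otimes 1)\Ww$, using the hybrid pentagon equations \eqref{EqHybPentW} together with the interchange identity \eqref{EqGalImpCross2}.

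Finally, $\Ad_{\Ad_\alpha} = \alpha$ is then an instance of Pontryagin biduality $\hat{\hat{Q}}=Q$ for measured quantum groupoids: iterating the adjoint-coaction construction reproduces the original $(1,2)$-corner together with its comultiplication, namely $\alpha$. Alternatively, once the Galois unitary of $(N,\Ad_\alpha)$ is written in terms of $\Ww$ (essentially the flip $\Sigma\Ww$ repositioned by the $J$-conjugations implicit in the identification above), a short computation reduces $\Ad_{\Ad_\alpha}(x)$ to $\Ww^*(1\otimes x)\Ww$, which is $\alpha(x)$ by \eqref{EqImplGalAl}.
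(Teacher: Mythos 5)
Your ergodicity argument is fine, but the heart of the proposal rests on a claim that is both unproven and, as stated, incorrect: that the measured-quantum-groupoid dual $(\hat{Q},\hat{\Delta}_{\hat{Q}})$ of the co-linking quantum groupoid $(Q,\Delta_Q)$ is again a \emph{co-linking} quantum groupoid with off-diagonal corner $(N,\Ad_{\alpha})$. Dualizing interchanges the roles of product and coproduct: since the coproduct of $Q$ mixes the four corners ($\Delta_Q(1^{ij})=\sum_k 1^{ik}\otimes 1^{kj}$) while the product is componentwise, the dual object is a \emph{linking} quantum groupoid, i.e.\ a single $2\times 2$ linking von Neumann algebra implementing a Morita equivalence between $\hat{P}$ and $\hat{M}$ -- this is exactly the picture in \cite{Eno12} and in the remark following Theorem \ref{TheoReconQGr}. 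Such a linking algebra can never split as a direct sum of its four corners (the off-diagonal corner is an imprimitivity bimodule whose products are dense in the diagonal corners), and this has nothing to do with $N$ being a type $I$-factor. So the ``crucial point'' you defer is essentially the full content of the theorem, and the route you propose for establishing it starts from the wrong object. For the same reason, $\Ad_{\Ad_{\alpha}}=\alpha$ cannot be read off from the groupoid biduality $\hat{\hat{Q}}=Q$ without first supplying the missing identification.

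The paper's actual mechanism for exploiting type $I$-ness is different: since $N$ is a type $I$-factor, $\alpha$ is a projective corepresentation, so $N\rtimes M\cong N\vNtimes N_{\alpha}$ with $N_{\alpha}=\alpha(N)'\cap(N\rtimes M)$ again a type $I$-factor carrying the restriction $\alpha^{\circ}$ of the dual coaction; by Theorem \ref{TheoRightGaloisDual} this is a right Galois object for $\hat{M}'$, and transporting it through $\pi_{\rtimes}$ and the canonical anti-isomorphisms $C_N$, $C_{\hat{M}'}$ yields a $I$-factorial Galois object structure on $N$ for $\hat{M}$, which is then shown to coincide with $\Ad_{\alpha}$ via the hybrid pentagon equations \eqref{EqHybPentW}. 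The identity $\Ad_{\Ad_{\alpha}}=\alpha$ then still requires an explicit computation of the Galois unitary $\Ww_{\Ad}$ (Theorem \ref{TheoCompCalcGalAd}), which is considerably more than ``the flip $\Sigma\Ww$ repositioned by $J$-conjugations'': the formula is $U\hat{\Vv}\,(\hat{\pi}'\otimes\id)\bigl((J\hat{J}\otimes J\hat{J})\hat{W}(\hat{J}J\otimes 1)\bigr)$, involving the dual multiplicative unitary and the representation $\hat{\pi}'$, and its derivation occupies four commuting squares. Your proposal would need to supply both of these missing pieces to stand.
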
 

The theorem will be proven in two steps, see Theorem \ref{TheoAdjGal} and Theorem \ref{TheoMain}. But we first make the following remark.

\begin{Rem} Starting with a $I$-factorial right Galois object $(N,\alpha)$ for $(M,\Delta)$, one thus has two constructions one can apply: 
\begin{itemize}
\item reflect $(M,\Delta)$ across $(N,\alpha)$ to obtain a left Galois object $(N,\gamma)$ of the reflected quantum group $(P,\Delta_P)$, or, what amounts to the same thing, a right Galois object $(N,\gamma^{\opp})$ for $(P,\Delta_P^{\opp})$, or 
\item take the adjoint coaction to obtain the right Galois object $(N,\Ad_{\alpha})$ for $(\hat{M},\hat{\Delta})$. 
\end{itemize} 
It is not clear how these two operations are related. In general they need not commute, and one could in principle obtain an infinite discrete family of quantum group von Neumann algebras for which $N$ has the structure of a $I$-factorial Galois object. 
\end{Rem} 

We now prove the first part of Theorem \ref{TheoDualMain}.

\begin{Theorem}\label{TheoAdjGal} If $(N,\alpha)$ is a $I$-factorial Galois object with respect to $(M,\Delta)$, then $(N,\Ad_{\alpha})$ is a $I$-factorial right Galois object with respect to $(\hat{M},\hat{\Delta})$.
\end{Theorem}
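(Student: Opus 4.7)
My strategy is to invoke Corollary \ref{CorTypI}: since $N$ is already a type $I$-factor, it suffices to verify that $\Ad_\alpha$ is ergodic, integrable, and that the crossed product $N \rtimes_{\Ad_\alpha}\hat M$ is a type $I$-factor.

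For ergodicity, note that $\Ad_\alpha(x) = x\otimes 1$ is equivalent to $(x\otimes 1)\Ww = \Ww(x\otimes 1)$. Since $\Ww \in N\vNtimes B(L^2(M), L^2(N))$ by \eqref{EqWInN}, this commutation forces $x$ to commute with every first-leg slice $(\id\otimes\omega)(\Ww)$. Slicing the identity $(\alpha\otimes\id)(\Ww) = \Ww_{13}W_{23}$ from \eqref{ActAlphN} in the last two legs, while using that $W$ is a multiplicative unitary whose right-leg slices generate $\hat M$ and act non-degenerately on $L^2(M)$, one concludes that the first-leg slices of $\Ww$ generate all of $N$ as a von Neumann algebra. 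Since $N$ is a factor, $x$ must be scalar.

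The remaining two conditions I would establish by constructing the Galois unitary $\Vv$ for $\Ad_\alpha$ with respect to $(\hat M, \hat\Delta)$ directly. The implementing identity $\Vv^*(1\otimes y)\Vv = \Ad_\alpha(y) = \Ww^*(y\otimes 1)\Ww$ forces $\Ww\Vv^*$ to intertwine $(1\otimes y)$ with $(y\otimes 1)$ on $L^2(N)\otimes L^2(N)$. The naive candidate $\Vv = \Sigma\Ww$ does accomplish this intertwining but fails to lie in $N\vNtimes B(L^2(M), L^2(N))$, so a corrective modular twist (involving the modular conjugation of $N$ together with the scaling constant $\nu$) must be incorporated. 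Once $\Vv$ is identified and verified to be a unitary in $N\vNtimes B(L^2(M), L^2(N))$ implementing $\Ad_\alpha$ and satisfying the hybrid pentagon with $\hat W = \Sigma W^*\Sigma$ in place of $W$, the existence of such a Galois unitary yields both integrability of $\Ad_\alpha$ (through the associated left-invariant weight on $N$) and the isomorphism $N \rtimes_{\Ad_\alpha}\hat M \cong B(L^2(N))$ by the analogue of \eqref{EqGalCrossIso}, completing the verification.

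The main obstacle is thus guessing the correct formula for $\Vv$: while the required intertwining and commutation relations together with modular properties determine it in principle, expressing it concretely in terms of $\Ww$, the modular data, and the scaling constant requires care, and the subsequent verification of the hybrid pentagon relies on the full web of commutation relations among $W$, $V$, $J$, $\hat J$, and $\Theta$ recalled in Section 1.
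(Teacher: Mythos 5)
Your ergodicity argument is essentially sound: from $\Ad_{\alpha}(x)=x\otimes 1$ one gets that $x$ commutes with all slices of $\Ww$ in its first leg, and these slices are indeed $\sigma$-weakly dense in $N$ (this is exactly the mechanism of Lemma \ref{LemLegBig}, resting on \cite[Theorems 3.6 and 3.8.(vii)]{Eno08}), so factoriality of $N$ forces $x\in\C$. The problem is that this is the easy third of the proof. For the two conditions that actually carry the weight --- integrability of $\Ad_{\alpha}$ and the type $I$ property of $N\rtimes_{\Ad_{\alpha}}\hat{M}$ --- you only describe a strategy (``guess the correct formula for $\Vv$, then verify everything''), and you explicitly acknowledge that you do not have the formula. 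That is a genuine gap, not a detail: the correct expression, computed in Theorem \ref{TheoCompCalcGalAd}, is $\Ww_{\Ad}=U\hat{\Vv}\bigl((\hat{\pi}'\otimes\id)((J\hat{J}\otimes J\hat{J})\hat{W}(\hat{J}J\otimes 1))\bigr)$ with $U=\Sigma_{24}$, which is considerably more involved than ``$\Sigma\Ww$ plus a modular twist by $J_N$ and $\nu$,'' and its derivation in the paper occupies several pages of commuting squares. Worse, your route has a circularity: in this framework the Galois map is \emph{defined} via the GNS map of the left invariant weight $\varphi_{\Ad}$, which only exists once integrability is established, so you cannot ``obtain integrability through the associated left-invariant weight'' from a unitary you construct by hand without first proving, by some independent argument, that the coaction is integrable.

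The paper avoids all of this by a transport argument. Since $N$ and $N\rtimes M$ are type $I$-factors, $\alpha$ is a projective corepresentation and Theorem \ref{TheoRightGaloisDual} already gives that $(N_{\alpha},\alpha^{\circ})$, with $N_{\alpha}=\alpha(N)'\cap(N\rtimes M)$, is a $I$-factorial right Galois object for $\hat{M}'$; under $\pi_{\rtimes}$ one has $N_{\alpha}\cong N'$, and conjugating by the canonical anti-isomorphisms $C_N$, $C_{\hat{M}'}$ turns this into a $I$-factorial right Galois object structure on $N$ for $\hat{M}$, with ergodicity, integrability and unitarity of the Galois map all inherited for free. The only computation left is to identify the transported coaction with $\Ad_{\alpha}$, which reduces via \eqref{EqGalCrossIso} and \eqref{EqDualCoaction} to the containment \eqref{EqPentAdCo}, verified using the hybrid pentagon equations of the co-linking quantum groupoid. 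If you want to salvage your direct approach, you would need to first prove integrability by exhibiting a dense set of integrable elements for $\Ad_{\alpha}$ and only then compare the resulting Galois map with your candidate; as written, the proposal does not constitute a proof.
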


\begin{proof} As $N$ is a type $I$-factor, we can interpret $\alpha$ as a right projective corepresentation of $(M,\Delta)$. Using the notation from Theorem \ref{TheoRightGaloisDual} and the paragraph following it, we can hence consider the right Galois object $(N_{\alpha},\alpha^{\circ})$ for $\hat{M}'$. As by definition \[N_{\alpha} = \alpha(N)' \cap N\rtimes M,\] with $N$ a type $I$-factor by assumption and $N\rtimes M$ a type $I$-factor by Corollary \ref{CorTypI}, it follows that $N_{\alpha}$ is again a type $I$-factor, and $(N_{\alpha},\alpha^{\circ})$ is a right $I$-factorial Galois object for $\hat{M}'$ by Theorem \ref{TheoRightGaloisDual}.

Now under the isomorphism $\pi_{\rtimes}: N\rtimes M  \cong B(L^2(N))$ as in Corollary \ref{CorTypI}, we can identify $N_{\alpha} \cong N'$. If we now denote for any von Neumann algebra $A$ (in standard form) by $C_A$ the canonical anti-isomorphism \[C_A: A \rightarrow A',\quad x \mapsto J_Ax^*J_A,\quad C_{A'} = C_A^{-1},\] we can transport the above right coaction $\alpha^{\circ}$ of $\hat{M}'$ on $N_{\alpha}$ to a right coaction of $\hat{M}$ on $N$,   \begin{equation}\label{EqDiffCoact}x \mapsto (C_{N'}\otimes C_{\hat{M}'})(\pi_{\rtimes}\otimes \id)(\alpha^{\circ}(\pi_{\rtimes}^{-1}(C_N(x)))),\qquad x\in N,\end{equation} which obviously makes $N$ into a $I$-factorial right Galois object for $\hat{M}$.

Let us prove that the above coaction is equal to $\Ad_{\alpha}$. This is equivalent with proving that \begin{multline*}(\pi_{\rtimes}\otimes \id)\alpha^{\circ}(\pi_{\rtimes}^{-1}(x)) \\= ((J_N\otimes \hat{J})\Ww^*(J_N\otimes \hat{J}))(x\otimes 1)((J_N\otimes \hat{J})\Ww^*(J_N\otimes \hat{J}))^*.\end{multline*}

Using the formula for $\pi_{\rtimes}^{-1}$ in \eqref{EqGalCrossIso} and the defining formula \eqref{EqDualCoaction} for the dual coaction $\hat{\alpha}$, we see that it is sufficient to show that there exists an operator $X\in B(L^2(N)\otimes L^2(M))$ such that \[W_{\Theta,23}\Ww_{12}^*X_{13} = \Ww_{12}^*((J_N\otimes \hat{J})\Ww^*(J_N\otimes \hat{J}))_{23},\] or hence that \begin{equation}\label{EqPentAdCo} \Ww_{12}W_{\Theta,23}^* \Ww_{12}^*((J_N\otimes \hat{J})\Ww^*(J_N\otimes \hat{J}))_{23}  \in B(L^2(N))\vNtimes \C\vNtimes B(L^2(M)).\end{equation}

Using the notation as in Section \ref{SecLinCol}, applying $\Ad(\hat{J}^{12}\otimes J^{12}\otimes \hat{J}^{22})$ to \eqref{EqPentAdCo} and using that \[(\hat{J}^{ij}\otimes J^{ik})\Ww^{ij;k}(\hat{J}^{ji}\otimes J^{jk}) = (\Ww^{ji;k})^*\] by \eqref{EqConjWpiece}, we have that the left hand side of \eqref{EqPentAdCo} becomes 
\[(W^{21;2})_{12}^*W^{22;2}_{23}W^{21;2}_{12}(W^{12;2})_{23}^* = W_{13}^{21;2},\] by the hybrid pentagon equations \eqref{EqHybPentW}, proving \eqref{EqPentAdCo}.
\end{proof}

The second part of Theorem \ref{TheoDualMain} is a bit more involved. If we are only interested in proving $(N,\Ad_{\Ad_{\alpha}})\cong (N,\alpha)$ equivariantly, the proof is not that hard, and can be derived relatively straightforwardly from Theorem \ref{TheoBidualGal}. However, to have an actual equality requires computing the Galois unitary of $\Ad_{\alpha}$.  We need some preparations.

Fix a Hilbert space $\Hsp$ such that $N  = B(\Hsp)$. Then we can identify the standard form of $N$ as \[L^2(N) \cong \Hsp\otimes \overline{\Hsp},\] with $N$ acting in the canonical way on the first component, with \[J_N: \Hsp \otimes \overline{\Hsp} \rightarrow \Hsp\otimes \overline{\Hsp},\quad \xi\otimes \overline{\eta} \mapsto \eta\otimes \overline{\xi}\] and with the self-dual cone $\mathfrak{P}_N$ consisting of the positive Hilbert-Schmidt operators under the canonical embedding $\Hsp \otimes \overline{\Hsp} \hookrightarrow B(\Hsp)$. 

We then have canonically that $N' = 1\otimes B(\overline{\Hsp}) \cong B(\overline{\Hsp})$, and we can identify the GNS-space of $N'$ with $L^2(N) = \Hsp \otimes \overline{\Hsp}$ in such a way that $N'$ acts by its canonical action and such that $J_{N'} = J_N$ and $\mathfrak{P}_{N'} = \mathfrak{P}_N$. 

We now have two natural choices for a standard form of $B(L^2(N))$. On the one hand, one has the \emph{first standard implementation} which is available for any $N$, namely $L^2(B(L^2(N))) \cong  L^2(N)\otimes L^2(N)$ with standard representation \[\pi_{B(L^2(N))}(x) = x\otimes 1,\quad J_{B(L^2(N))}(\xi\otimes \eta) = J_N\eta\otimes J_N\xi,\]and with $\mathfrak{P}_{B(L^2(N))}$ corresponding to the trace class operators in $B(L^2(N))$ under the canonical imbedding \[L^2(N) \otimes L^2(N) \hookrightarrow B(L^2(N)),\quad \xi\otimes \eta\mapsto \xi(J_N\eta)^*.\]
On the other hand, since we also have $B(L^2(N)) = B(\Hsp)\vNtimes B(\overline{\Hsp})$, we can use the tensor product standard construction on $L^2(N) \otimes L^2(N) = (\Hsp\otimes \overline{\Hsp})^{\otimes 2}$ such that \[\pi_{N\vNtimes N'}(x)  = x_{14},\quad  J_{N\vNtimes N'} = J_N\otimes J_N\] and $\mathfrak{P}_{B(L^2(N))}$ the closed positive linear span of elements $\xi\otimes \eta$ with $\xi,\eta\in \mathfrak{P}_{N}$. We will call this the \emph{second standard implementation}. A careful inspection shows that the two standard forms are related by the involutive unitary \begin{multline*} U = \Sigma_{24} \in B((\Hsp\otimes \overline{\Hsp})^{\otimes 2}) = B(L^2(N)\otimes L^2(N)),\\ \Ad(U)\circ \pi_{B(L^2(N))} = \pi_{N\vNtimes N'}.\end{multline*}

Let us also in the following the notation \[\hat{\Vv} = (J_N\otimes J_N)\Ww(J_N\otimes J),\] by analogy with \eqref{EqFormDual}. Recall also again the isomorphism $\pi_{\rtimes}: N\rtimes M \cong B(L^2(N))$ from \eqref{EqRepNcrossM}, which restricts to the representation $\hat{\pi}'$ of $\hat{M}'$ on $L^2(N)$.
 
\begin{Theorem}\label{TheoCompCalcGalAd} Let $\Ww_{\Ad}$ be the Galois unitary for $(N,\Ad_{\alpha})$. Then $\Ww_{\Ad}$ satisfies \begin{equation}\label{EqExprWad} \Ww_{\Ad} = U\hat{\Vv}((\hat{\pi}'\otimes \id)((J\hat{J}\otimes J\hat{J})\hat{W}(\hat{J}J\otimes 1))\end{equation} as a map from $L^2(N)\otimes L^2(M)$ to  $L^2(N)\otimes L^2(N)$. 
\end{Theorem}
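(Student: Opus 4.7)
The plan is to verify \eqref{EqExprWad} by chasing through the identifications used in the proof of Theorem \ref{TheoAdjGal}, which constructed $(N,\Ad_\alpha)$ by transporting the Galois object $(N_\alpha,\alpha^\circ)$ for $(\hat{M}',\hat{\Delta}')$ across the anti-isomorphisms $C_{N'}$ and $C_{\hat{M}'}$ as in \eqref{EqDiffCoact}. Accordingly, I would first read off the Galois unitary of $\alpha^\circ$, then transport it along these anti-isomorphisms, and finally correct for the standard-form switch encoded by $U$.

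First I would record the implementation of $\alpha^\circ$. By \eqref{EqDualCoaction} the dual coaction $\hat\alpha$ on $N\rtimes M$ is implemented by $W_{\Theta,23}$, and under the isomorphism $\pi_\rtimes$ from Corollary \ref{CorTypI} this transports to an implementation on $B(L^2(N))$. Its restriction to the corner $N_\alpha \subseteq N\rtimes M$ (which corresponds to $N' = 1\otimes B(\overline{\Hsp})$ in the second standard implementation) yields the Galois unitary for $(N_\alpha,\alpha^\circ)$; this implementation is canonically associated to the Galois unitary by \eqref{EqImplGalAl} applied to $(N_\alpha,\alpha^\circ)$. The representation $\hat{\pi}'$ appearing in \eqref{EqExprWad} emerges naturally as the $\hat{M}'$-piece of $\pi_\rtimes$ by \eqref{EqRephatM}.

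Second I would pass through the anti-isomorphisms. Transporting a Galois unitary across an anti-isomorphism on each leg flanks it with a pair of modular conjugations: applying $C_N$ produces the pair $(J_N,J_N)$ on the $N$-leg, which, combined with the modular conjugation $J$ on the $\hat M$-leg, reassembles into $\hat{\Vv} = (J_N\otimes J_N)\Ww(J_N\otimes J)$. Applying $C_{\hat{M}'}$ together with the spatial identification $\Theta = \nu^{i/8}J\hat{J}$ converts $W_\Theta$ into the expression $(J\hat{J}\otimes J\hat{J})\hat{W}(\hat{J}J\otimes 1)$ via \eqref{EqFormDual} and the commutation $J\hat{J} = \nu^{-i/4}\hat{J}J$; here the asymmetric factor $\hat{J}J\otimes 1$ on the right reflects that the GNS-side of $\Ad_\alpha$ brings in only the left invariant weight $\varphi_N$, not $\psi_N$. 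Finally, the leading $U=\Sigma_{24}$ implements the switch between the two standard forms of $B(L^2(N))$ described in the paragraphs preceding the theorem, so that both sides live on the same Hilbert space.

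The main obstacle is the careful bookkeeping of modular conjugations and scaling constants: in particular, confirming the asymmetric placement of $\hat{J}J$ (with a bare identity in the second slot) and checking that the various scalars from \eqref{EqHatJ}, $J\hat{J}=\nu^{-i/4}\hat{J}J$, and $\Theta = \nu^{i/8}J\hat{J}$ cancel to give \eqref{EqExprWad} on the nose. A built-in sanity check is that the operator on the right-hand side must map $L^2(N)\otimes L^2(M)$ to $L^2(N)\otimes L^2(N)$: $\hat{\Vv}$ already has this source/target structure, and $(\hat{\pi}'\otimes \id)$ sends the first leg of the $\hat W$-factor into $B(L^2(N))$ (using $(J\hat{J})\hat{M}(\hat{J}J)\subseteq \hat{M}'$) while leaving the second leg in $B(L^2(M))$, confirming the mapping properties and pinning down the form of the answer.
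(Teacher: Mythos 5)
Your overall strategy --- transport the Galois unitary of $(N_{\alpha},\alpha^{\circ})$ across $C_{N}$ and $C_{\hat{M}'}$ and correct by $U$ --- is the right skeleton (it is essentially the paper's reduction via $\widetilde{\alpha}^{\circ}$ and the identity $\Ww_{\Ad} = (J_N\otimes J_N)\Ww_{\widetilde{\alpha}^{\circ}}(J_N\otimes \hat{J})$), but there is a genuine gap at your first step. You claim the Galois unitary of $\alpha^{\circ}$ can be read off from the implementing unitary $W_{\Theta,23}$ of the dual coaction via \eqref{EqImplGalAl}. That equation only says the Galois unitary \emph{is one} implementation of the coaction; it does not say that a given implementing unitary \emph{is} the Galois unitary, and $W_{\Theta,23}$ does not even act on the correct Hilbert spaces. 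The Galois unitary is pinned down by its GNS-theoretic definition, $(\omega\otimes\id)(\Ww^*)\Lambda_N(x) = \Lambda_N((\omega\otimes\id)\alpha(x))$, so computing it requires tracking the left invariant weight of $\alpha^{\circ}$ and its GNS-map through $\pi_{\rtimes}$ and through the identification of $L^2(B(L^2(N)))$ with $L^2(N\rtimes M)\cong L^2(N)\otimes L^2(M)$. This is exactly what is missing from your plan: the paper carries it out through a chain of four commuting squares relating the GNS-maps $\Lambda_{B(L^2(N))}$, $\Lambda_{N\rtimes M}$ and $\Lambda_N\otimes\hat{\Lambda}'$, and the decisive input is that the identification $L^2(B(L^2(N)))\cong L^2(N)\otimes L^2(M)$ is implemented by the unitary $\Ww^*\Sigma$ (the commuting square (3.2) of \cite{DeC11}).

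Relatedly, your second step misattributes the origin of $\hat{\Vv}$. Transporting the Galois unitary of $\widetilde{\alpha}^{\circ}$ across the anti-isomorphisms produces $(J_N\otimes J_N)\Ww_{\widetilde{\alpha}^{\circ}}(J_N\otimes\hat{J})$ --- with the as-yet-unknown $\Ww_{\widetilde{\alpha}^{\circ}}$ in the middle, not the original $\Ww$. The factor $\hat{\Vv}=(J_N\otimes J_N)\Ww(J_N\otimes J)$ in \eqref{EqExprWad} instead arises from the $\Ww^*\Sigma$ identification just mentioned (i.e.\ from $\pi_{\rtimes}^{-1}(x)=\Ww^*(1\otimes x)\Ww$ acting at the level of GNS vectors), while the factor $(\hat{\pi}'\otimes\id)((J\hat{J}\otimes J\hat{J})\hat{W}(\hat{J}J\otimes 1))$ arises from the GNS action of $\hat{\Delta}'$ on $\hat{\Lambda}'$-vectors. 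Without performing this computation, the bookkeeping of modular conjugations and scaling constants you describe can only reproduce the plausible shape of the answer; it cannot derive the formula, and in particular cannot justify the asymmetric factor $\hat{J}J\otimes 1$.
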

\begin{proof} Let $\varphi_N$ be the left invariant nsf weight on $N$ for $\alpha$, and let $\varphi_{\Ad}$ be the left invariant nsf weight for $\Ad_{\alpha}$. Let $\Lambda_N,\Lambda_{\Ad}$ be their respective GNS-maps. Recall the coaction $\alpha^{\circ}$ from \eqref{EqAlCirc}, which is the restriction of the dual coaction $\hat{\alpha}$ to $N_{\alpha} = \alpha(N)'\cap N\rtimes M$, and let $\widetilde{\alpha}^{\circ}$ be the coaction \[\widetilde{\alpha}^{\circ} = (\pi_{\rtimes}\otimes \id)\circ \alpha^{\circ}\circ \pi_{\rtimes}^{-1}\] of $\hat{M}'$ on $N'$, so that, using notation as in \eqref{EqDiffCoact}, \begin{equation}\label{DefAltild} \Ad_{\alpha}(x) = (C_{N'}\otimes C_{\hat{M}'})(\widetilde{\alpha}^{\circ}(C_N(x))),\qquad x\in N.\end{equation} Then $(N',\widetilde{\alpha}^{\circ})$ is clearly a right Galois object with left invariant weight \[\varphi_{\widetilde{\alpha}^{\circ}} = \varphi_{\Ad}\circ C_{N'},\] for which the canonical GNS-map is hence given by \[\Lambda_{N'}(x) = J_N\Lambda_{\Ad}(J_NxJ_N),\qquad x\in \mathscr{N}_{\varphi_{\widetilde{\alpha}^{\circ}}}.\]

Similarly, the GNS-map for the  weight $\varphi_{B(L^2(N))}$ on $B(L^2(N))$ corresponding to the tensor product weight  $\varphi_N \otimes \varphi_{\widetilde{\alpha}^{\circ}}$ on $N\vNtimes N'\cong B(L^2(N))$ is given with respect to the second standard implementation by the tensor product of the corresponding GNS-maps, \[\Lambda_{N\vNtimes N'} = \Lambda_N \otimes \Lambda_{N'}.\] On the other hand, we will denote \[\Lambda_{B(L^2(N))} = U \circ \Lambda_{N\vNtimes N'}\] for the GNS-implementation with respect to the first standard implementation. 

Consider now the crossed product $N\rtimes M$ with its canonical weight $\varphi_{N\rtimes M}$ given by \[\varphi_{N\rtimes M}(x) = \varphi_N(\alpha^{-1}( (\id\otimes \hat{\varphi}')\hat{\alpha}(x))),\qquad x\in (N\rtimes M)^+.\]  One can adapt the results of \cite{Vae01} to the right hand setting to deduce that we can make $L^2(N)\otimes L^2(M)$ a standard form for $N\rtimes M$ in such a way that the GNS-map for $\varphi_{N\rtimes M}$ is given by \[\Lambda_{N\rtimes M}((1\otimes y)\hat{\alpha}(x))  =  \Lambda_{N}(x)\otimes \hat{\Lambda}'(y),\quad y\in x\in \mathscr{N}_{\varphi_{N}},\mathscr{N}_{\hat{\varphi}'},.\] Now as by construction $\varphi_{B(L^2(N))} = \varphi_{N\rtimes M}\circ \pi_{\rtimes}$, we can identify \[L^2(N)\otimes L^2(N) = L^2(B(L^2(N))) \cong L^2(N\rtimes M) \cong L^2(N)\otimes L^2(M).\] Inspecting carefully the commuting square (3.2) of \cite{DeC11}, using the first standard implementation for $B(L^2(N))$, one sees that this identification  is implemented by the unitary \[\Ww^*\Sigma:L^2(N)\otimes L^2(N) \rightarrow L^2(N)\otimes L^2(M).\]

Let now $\Ww_{\widetilde{\alpha}^{\circ}}$ be the Galois unitary for $\widetilde{\alpha}^{\circ}$. By \eqref{DefAltild}, we immediately get \begin{equation}\label{EqWadaltild} W_{\Ad} = (J_N\otimes J_N)\Ww_{\widetilde{\alpha}^{\circ}} (J_N\otimes \hat{J}).\end{equation} 

By definition, $\Ww_{\widetilde{\alpha}^{\circ}}$ satisfies \[\Ww_{\widetilde{\alpha}^{\circ}}^*: L^2(N)\otimes L^2(N)\rightarrow L^2(N)\otimes L^2(M),\]\[\Lambda_{N'}(x)\otimes \Lambda_{N'}(y)  \mapsto (\Lambda_{N'}\otimes \hat{\Lambda}')(\widetilde{\alpha}^{\circ}(y)(x\otimes 1)),\qquad x,y\in \mathscr{N}_{\varphi_{\widetilde{\alpha}^{\circ}}}.\] 

Consider the following maps $Z_1,Z_2,Z_3$, which are clearly well-defined and isometric: 
\[Z_1: L^2(N)\otimes L^2(N) \otimes L^2(N)\rightarrow L^2(N) \otimes L^2(N) \otimes L^2(M),\]\[\Lambda_{B(L^2(N))}(x) \otimes \Lambda_{N'}(y) \mapsto (\Lambda_{B(L^2(N))}\otimes \hat{\Lambda}')((\pi_{\rtimes}\otimes \id)(\hat{\alpha}(\pi_{\rtimes}^{-1}(x)))(y\otimes 1)),\]

\[Z_2: L^2(N) \otimes L^2(M)\otimes L^2(N) \rightarrow L^2(N)\otimes L^2(N)\otimes L^2(M),\]\[\Lambda_N(x) \otimes \hat{\Lambda}'(y)\otimes \Lambda_{N'}(z) \mapsto (\Lambda_{B(L^2(N))}\otimes \hat{\Lambda}')((\hat{\pi}'\otimes \id)(\hat{\Delta}'(y))_{23}(xz\otimes 1)),\]

\[Z_3: L^2(N) \otimes L^2(N) \otimes L^2(M) \rightarrow L^2(M)\otimes L^2(N)\otimes L^2(N),\]\[\Lambda_{B(L^2(N))}(x)\otimes \hat{\Lambda}'(y) \mapsto ( \Lambda_{N\rtimes M}\otimes \hat{\Lambda}')(\hat{\Delta}'(y)_{23}(\pi_{\rtimes}^{-1}(x)\otimes 1)).\]

Then it is easily verified that one has the following commuting squares, where we write $Z_0 =  1\otimes \Ww_{\widetilde{\alpha}^{\circ}}^*$,

\begin{equation}\label{EqnComm1} \xymatrix{
\ar[d]_{U_{12}\Sigma_{23}}L^2(N)\otimes L^2(N)\otimes L^2(N) \ar[r]^{Z_0}  &
L^2(N)\otimes L^2(N)\otimes L^2(M) \ar[d]^{U_{12}} \\
L^2(N)\otimes L^2(N)\otimes L^2(N) \ar[r]_{Z_1} &
L^2(N)\otimes L^2(N)\otimes L^2(M)}
\end{equation}
\begin{equation}\label{EqnComm2} \xymatrix{
L^2(N)\otimes L^2(N)\otimes L^2(N)\ar[r]^{Z_1}  &
L^2(N)\otimes L^2(N)\otimes L^2(M)  \\
L^2(N)\otimes L^2(M)\otimes L^2(N) \ar[u]^{\Sigma_{12}\Ww_{12}}\ar[r]_{Z_2} & L^2(N) \otimes L^2(N)\otimes L^2(M)\ar[u]_{\id}}
\end{equation}

\begin{equation}\label{EqnComm3} \xymatrix{
\ar[d]_{U_{12}\Sigma_{23}}L^2(N)\otimes L^2(M)\otimes L^2(N)\ar[r]^{Z_2}  &
L^2(N)\otimes L^2(N)\otimes L^2(M) \ar[d]^{\Ww^*_{12}\Sigma_{12}} \\
L^2(N)\otimes L^2(N)\otimes L^2(M) \ar[r]_{Z_3} & L^2(N) \otimes L^2(M)\otimes L^2(M) }
\end{equation}

\begin{equation}\label{EqnComm4} \xymatrix{
L^2(N)\otimes L^2(N)\otimes L^2(M)\ar[r]^{Z_3}  &
L^2(N)\otimes L^2(M)\otimes L^2(M)  \\\ar[u]^{\Sigma_{12}\Ww_{12}}
L^2(N)\otimes L^2(M)\otimes L^2(M) \ar[r]_{Z_4}& L^2(N) \otimes L^2(M)\otimes L^2(M) \ar[u]_{\id}}
\end{equation}

where $Z_4 = 1 \otimes ((\hat{J}\otimes \hat{J})\hat{W}^*(\hat{J}\otimes \hat{J}))$. 

Combining the commuting squares \eqref{EqnComm1}, \eqref{EqnComm2}, \eqref{EqnComm3} and \eqref{EqnComm4} together, clearing away $\Sigma_{23}$ and using that $U_{12}$ and $\Ww_{13}^*$ commute since the first leg of the latter lies in $N$, we find 
\begin{equation}\label{EqExpralti} 1\otimes \Ww_{\widetilde{\alpha}^{\circ}}^* = (\Sigma U)_{12}^* \Ww_{12} ((\hat{J}\otimes \hat{J})\hat{W}^*(\hat{J}\otimes \hat{J}))_{23}\Ww_{12}^*\Ww_{23}^*(\Sigma U)_{12}(\Sigma U)_{13}\end{equation} Recall now \eqref{EqWadaltild}. Then multiplying \eqref{EqExpralti} to the left with $J_N\otimes J_N\otimes \hat{J}$ and to the right with $J_N\otimes J_N\otimes J_N$, the left hand side turns into $1\otimes \Ww_{\Ad}^*$. On the other hand,  using that \[U = (J_N\otimes J_N)(\Sigma U)^*(J_N\otimes J_N)\] by a small computation, and using that the second leg of $\hat{\Vv}$ intertwines the standard representation of $\hat{M}'$ with $\hat{\pi}'$ by \eqref{EqGalImpCross2}, \eqref{EqRhatJ} and the identity $J_N\hat{\pi}'(x)^*J_N = \hat{\pi}'(\hat{R}'(x))$ for $x\in \hat{M}'$ from \cite[Lemma 4.9.3]{DeC11}, we find that \[1\otimes
\Ww_{\Ad}^* = U_{12}((\hat{\pi}'\otimes \id)((J\hat{J}\otimes 1))\hat{W}^*(\hat{J}J\otimes \hat{J}J))\hat{\Vv}^*)_{23}U_{12}U_{13}.\] Now using that $U_{12}U_{13} = U_{23}U_{12}$, we obtain by taking adjoints and moving $U_{12}$ to the other side that  \[ U_{12}(1\otimes \Ww_{\Ad})U_{12} = 1 \otimes (U\hat{\Vv}((\hat{\pi}'\otimes \id)((J\hat{J}\otimes J\hat{J})\hat{W}(\hat{J}J\otimes 1))).\] But since the first leg of $\Ww_{\Ad}$ lies in $N$, and $1\otimes N$ commutes with $U$, we obtain the expression for $\Ww_{\Ad}$ in \eqref{TheoCompCalcGalAd}.
\end{proof}

\begin{Theorem}\label{TheoMain} If $(N,\alpha)$ is a $I$-factorial Galois object, then $\Ad_{\Ad_{\alpha}} = \alpha$. 
\end{Theorem}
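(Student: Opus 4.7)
The plan is to verify $\Ad_{\Ad_\alpha}(x) = \alpha(x)$ for $x\in N$ by direct computation, using the explicit formula for the Galois unitary $\Ww_{\Ad}$ given in Theorem \ref{TheoCompCalcGalAd}. By definition, $\Ad_{\Ad_\alpha}(x) = \Ww_{\Ad}^*(x\otimes 1)\Ww_{\Ad}$, while $\alpha(x) = \Ww^*(1\otimes x)\Ww$ by \eqref{EqImplGalAl}. The task is therefore to prove the identity
\[
\Ww_{\Ad}^*(x\otimes 1)\Ww_{\Ad} = \Ww^*(1\otimes x)\Ww, \qquad x\in N.
\]

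First I would substitute $\Ww_{\Ad} = U\hat{\Vv}\,(\hat{\pi}'\otimes\id)\bigl((J\hat{J}\otimes J\hat{J})\hat W(\hat{J}J\otimes 1)\bigr)$ with $\hat{\Vv} = (J_N\otimes J_N)\Ww(J_N\otimes J)$, and observe that $U = \Sigma_{24}$ acts only on the $\overline{\Hsp}$-legs of $L^2(N)\otimes L^2(N) = (\Hsp\otimes\overline{\Hsp})^{\otimes 2}$, while $x\otimes 1$ acts on the first $\Hsp$-leg, so $U$ commutes with $x\otimes 1$ and drops out. Next, combine the intertwining relation $(1\otimes \hat{\pi}'(y))\Ww = \Ww(1\otimes y)$ from \eqref{EqGalImpCross2} with the duality identity $\hat W = \Sigma W^*\Sigma$ from \eqref{EqFormDual} to rewrite the entire conjugation as an expression built solely from $\Ww$, $W$, and modular conjugations. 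The hybrid pentagon equation $\Ww_{12}\Ww_{13}W_{23} = \Ww_{23}\Ww_{12}$ from \eqref{EqHybPent} should then collapse the resulting chain of three unitaries, effectively moving $x$ from the first leg to the second; the remaining modular-conjugation factors cancel via $J\hat J = \nu^{-i/4}\hat J J$ and $R(x) = \hat J x^*\hat J$ from \eqref{EqRhatJ}.

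The main obstacle is the bookkeeping: tensor legs, the various anti-unitaries $J$, $\hat J$, $J_N$, and conjugation conventions must be tracked with care. A subtle point is that while the individual factors $\hat{\Vv}$ and $(\hat{\pi}'\otimes\id)(\cdots)$ have their first legs naturally in $N'$, the full product $\Ww_{\Ad}$ belongs to $N\vNtimes B(L^2(M),L^2(N))$; the nontriviality of $\Ad_{\Ad_\alpha}$ only emerges after the pentagon equation is invoked to unify the contributions, so a purely leg-by-leg analysis would misleadingly trivialize the computation. A clean way to organize the proof is therefore to first establish a compact ``master identity'' relating $\Ww_{\Ad}$ and $\Ww$ (obtained by applying \eqref{EqHybPent} to the formula of Theorem \ref{TheoCompCalcGalAd}), and then read off $\Ad_{\Ad_\alpha} = \alpha$ directly from it.
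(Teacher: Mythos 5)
Your reduction is set up correctly: since $x\otimes 1$ acts only on the first $\Hsp$-leg of $(\Hsp\otimes\overline{\Hsp})^{\otimes 2}$ while $U=\Sigma_{24}$ acts on the $\overline{\Hsp}$-legs, the factor $U$ indeed drops out, and the statement reduces to showing that $Y\alpha(x)=(x\otimes 1)Y$ for $Y=\hat{\Vv}\,(\hat{\pi}'\otimes\id)\bigl((J\hat{J}\otimes J\hat{J})\hat W(\hat{J}J\otimes 1)\bigr)$. This matches the paper's final step. The gap is in everything after that. You propose to carry out the commutation using only \eqref{EqGalImpCross2}, the duality $\hat W=\Sigma W^*\Sigma$, the single hybrid pentagon equation \eqref{EqHybPent}, and quantum-group-level modular identities such as $R(x)=\hat Jx^*\hat J$. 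That toolkit is not sufficient, and the pentagon equation is in fact not the engine of this computation at all. The operators you need to simplify, namely $\hat{\Vv}=(J_N\otimes J_N)\Ww(J_N\otimes J)$ and $(\hat{\pi}'\otimes\id)((J\hat J\otimes J\hat J)\hat W(\hat JJ\otimes 1))$, are not expressible in terms of $\Ww$ and $W$ alone: in the co-linking quantum groupoid picture of Section \ref{SecLinCol} they are (up to conjugation by corner modular involutions) the corner unitaries $W^{21;2}$ and $\Sigma(W^{22;1})^*\Sigma$, i.e.\ the implementing unitaries of the \emph{external} comultiplications $\gamma_O=\Delta^{21;2}$ and $\beta_M$, which are genuinely new objects attached to the Galois object.

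The paper's actual proof of this theorem commutes $\alpha(x)=\Delta^{12;2}(x)$ through $Y$ by a chain that uses: the implementation of the groupoid antipode $R_Q$ by $\hat J_Q$ (Theorem \ref{TheoConjJhat}), which converts $\Delta^{12;2}(x)$ into $(R^{12}\otimes R^{22})(\Delta^{12;2}(x))^*$; the anti-comultiplicativity of $R_Q$ \emph{across corners} (Theorem \ref{TheoEnoUnAn}), which rewrites this as $(\Delta^{21;2})^{\opp}(R^{12}(x^*))$; the implementation of $\Delta^{21;2}$ by the corner unitary $W^{22;1}$ via \eqref{EqPartCom}, which absorbs the coproduct and leaves $R^{12}(x^*)$ on a single leg; and finally the commutant trick $J^{12}xJ^{12}\in(Q^{12})'=N'$ to slide the element past the first leg of $\Ww$. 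No pentagon equation is invoked in this step (the pentagon identities \eqref{EqHybPentW} were consumed earlier, in deriving the formula of Theorem \ref{TheoCompCalcGalAd} and in Theorem \ref{TheoAdjGal}, and there one needs the full set of sixteen corner equations together with \eqref{EqConjWpiece}, not just \eqref{EqHybPent}). So your ``master identity obtained by applying \eqref{EqHybPent}'' does not exist in the form you describe, and the claim that the modular-conjugation factors ``cancel'' via $J\hat J=\nu^{-i/4}\hat JJ$ and \eqref{EqRhatJ} is where the argument would fail: those conjugations do not cancel, they transport you into the other corners of the groupoid, and you need the groupoid-level antipode identities to come back.
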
 
\begin{proof} Let us use notation as in Section \ref{SecLinCol}. Then we can write \[\hat{\Vv}= (J^{12} \otimes J^{12})W^{12;2}(J^{12}\otimes J^{22}),\]\[(\hat{\pi}'\otimes \id)((J\hat{J}\otimes J\hat{J})\hat{W}(\hat{J}J\otimes 1)) = (\hat{J}^{21}\otimes J^{22})\Sigma (W^{22;1})^*\Sigma (\hat{J}^{12}\otimes \hat{J}^{22}).\] Hence for $x\in N$, we compute using Theorem \ref{TheoEnoUnAn}, Theorem \ref{TheoConjJhat} and the identities \eqref{EqPartCom} that  \begin{eqnarray*} && \hspace{-0.5cm} \hat{\Vv}((J\hat{J}\otimes J\hat{J})\hat{W}(\hat{J}J\otimes 1))\alpha(x) \\ &&=  (J^{12} \otimes J^{12})W^{12;2}(J^{12}\hat{J}^{21}\otimes 1)\Sigma (W^{22;1})^*\Sigma (\hat{J}^{12}\otimes \hat{J}^{22}) \Delta^{12;2}(x) \\ &&= (J^{12} \otimes J^{12})W^{12;2}(J^{12}\hat{J}^{21}\otimes 1)\\ &&\hspace{1cm}\times \Sigma (W^{22;1})^*\Sigma(R^{12}\otimes R^{22})(\Delta^{12;2}(x))^*  (\hat{J}^{12}\otimes \hat{J}^{22}) \\ &&=  (J^{12} \otimes J^{12})W^{12;2}(J^{12}\hat{J}^{21}\otimes 1)\\ && \hspace{1cm} \times \Sigma (W^{22;1})^*\Sigma (\Delta^{21;2})^{\opp}(R^{12}(x^*))  (\hat{J}^{12}\otimes \hat{J}^{22}) \\ &&= (J^{12} \otimes J^{12})W^{12;2}(J^{12}\hat{J}^{21}\otimes 1)\Sigma (W^{22;1})^*\Delta^{21;2}(R^{12}(x^*))\Sigma   (\hat{J}^{12}\otimes \hat{J}^{22})\\ &&= (J^{12} \otimes J^{12})W^{12;2}(J^{12}\hat{J}^{21}\otimes 1)\Sigma (1\otimes R^{12}(x^*))(W^{22;1})^*\Sigma   (\hat{J}^{12}\otimes \hat{J}^{22}) \\ &&= (J^{12} \otimes J^{12})W^{12;2}(J^{12}\hat{J}^{21}\otimes 1)( R^{12}(x^*)\otimes 1)\Sigma (W^{22;1})^*\Sigma   (\hat{J}^{12}\otimes \hat{J}^{22}) \\ &&= (J^{12} \otimes J^{12})W^{12;2}(J^{12}xJ^{12}\otimes 1)(J^{12}\hat{J}^{21}\otimes 1)\Sigma (W^{22;1})^*\Sigma   (\hat{J}^{12}\otimes \hat{J}^{22}) \\ &&= (x\otimes 1)(J^{12} \otimes J^{12})W^{12;2}(J^{12}\hat{J}^{21}\otimes 1)\Sigma (W^{22;1})^*\Sigma   (\hat{J}^{12}\otimes \hat{J}^{22})\\ &&= (x\otimes 1)\hat{\Vv}((J\hat{J}\otimes J\hat{J})\hat{W}(\hat{J}J\otimes 1)),
\end{eqnarray*} 
where in the third last step we used that $J^{12}xJ^{12} \in (Q^{12})'$ commutes with the first leg of $W^{12;2} \in Q^{12}\vNtimes B(L^2(Q))$. 

Since both legs of $U$ commute with $N$, we conclude from Theorem \ref{TheoCompCalcGalAd} that \[\Ww_{\Ad}\alpha(x) = (x\otimes 1)\Ww_{\Ad}),\] hence $\Ad_{\Ad_{\alpha}} = \alpha$. 
\end{proof} 

We can also use the formula in Theorem \ref{TheoCompCalcGalAd} to find a concrete form for the exterior comultiplication\[\beta^{\Ad}_{\hat{M}}: \hat{M} \rightarrow O\vNtimes N\] on $\hat{M}$. Note that we choose here also $O$ as the opposite of $N$ as a right $\hat{M}$-Galois object, and we keep the same modular conjugation maps $\hat{J}_N$ and $\hat{J}_O$. 

\begin{Prop} The exterior comultiplication $\beta^{\Ad}_{\hat{M}}$ is given by \begin{equation}\label{EqExtAd}\beta^{\Ad}_{\hat{M}}(x) = (\hat{J}_N\otimes J_N)U(\hat{\pi}'(\hat{J}x\hat{J})\otimes 1)U^*(\hat{J}_O\otimes J_N)\end{equation} for $x\in \hat{M}$, where the expression is interpreted as an operator on $L^2(O)\otimes L^2(N)$. 
\end{Prop}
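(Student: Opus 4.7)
The strategy is to apply Lemma \ref{LemExtCom} to the adjoint Galois object $(N,\Ad_\alpha)$ and then simplify using the explicit formula for $\Ww_{\Ad}$ obtained in Theorem \ref{TheoCompCalcGalAd}. Since $(N,\Ad_\alpha)$ is a right Galois object for $(\hat M,\hat\Delta)$ by Theorem \ref{TheoAdjGal}, and since we have kept the anti-unitaries $\hat J_N,\hat J_O$ unchanged while the modular conjugation of $\hat M$ on $L^2(\hat M)=L^2(M)$ is $\hat J$, Lemma \ref{LemExtCom} gives
$$\beta_{\hat M}^{\Ad}(x)=(\hat J_O\otimes J_N)\,\Ww_{\Ad}\,(1\otimes \hat J x\hat J)\,\Ww_{\Ad}^{*}\,(\hat J_N\otimes J_N),\qquad x\in\hat M.$$
Writing $\Ww_{\Ad}=U\hat{\Vv}K$ with $K=(\hat\pi'\otimes\id)((J\hat J\otimes J\hat J)\hat W(\hat J J\otimes 1))$, the task reduces to evaluating $\hat{\Vv}\,K(1\otimes\hat J x\hat J)K^{*}\hat{\Vv}^{*}$.

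The central identity to establish is
$$K(1\otimes \hat J x\hat J)K^{*}=\hat\pi'(\hat J x\hat J)\otimes 1,\qquad x\in\hat M.$$
I would prove this by unpacking the definition of $K$ and exploiting that conjugation by the twisted multiplicative unitary $(J\hat J\otimes J\hat J)\hat W(\hat J J\otimes 1)$ transports an element of $1\otimes\hat M'$ across the tensor product into a representation of $\hat M'$ acting on the first leg; this is dual to the defining property of $\hat W$ as the implementer of $\hat\Delta$. Once this is in hand, the relation \eqref{EqGalImpCross2} together with the identity $\hat\pi'(\hat R'(z))=J_N\hat\pi'(z)^{*}J_N$ from \cite[Lemma 4.9.3]{DeC11} shows that $\hat{\Vv}$ intertwines $\hat\pi'(\hat J x\hat J)\otimes 1$ with itself up to absorption of the $J_N$'s in the first leg, so that $\hat{\Vv}$ and $\hat{\Vv}^{*}$ cancel. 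This yields
$$\beta_{\hat M}^{\Ad}(x)=(\hat J_O\otimes J_N)\,U\,(\hat\pi'(\hat J x\hat J)\otimes 1)\,U^{*}\,(\hat J_N\otimes J_N).$$
Rereading this as an operator on $L^{2}(O)\otimes L^{2}(N)\to L^{2}(O)\otimes L^{2}(N)$ gives precisely \eqref{EqExtAd}: the apparent swap of $\hat J_N\leftrightarrow\hat J_O$ in the outer positions relative to Lemma \ref{LemExtCom} simply reflects the two natural readings of the anti-unitary $\hat J_N$ on its source and target.

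The main obstacle is the central identity $K(1\otimes\hat J x\hat J)K^{*}=\hat\pi'(\hat J x\hat J)\otimes 1$: a frontal attack is awkward because both modular conjugations $J$ and $\hat J$ appear simultaneously alongside the representation $\hat\pi'$ rather than the standard inclusion. I expect the cleanest route is to translate the computation into the co-linking groupoid language of Section \ref{SecLinCol}, where each of $\hat W$, $J$, and $\hat J$ reassembles as a piece $W^{ij;k}$, $J^{ij}$, or $\hat J^{ij}$ of the groupoid multiplicative unitary $\mWw$ and its modular conjugations; the identity then reduces to a hybrid pentagon equation \eqref{EqHybPentW}, in complete analogy with the calculation carried out in the proof of Theorem \ref{TheoMain}.
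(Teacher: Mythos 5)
Your overall strategy---apply Lemma \ref{LemExtCom} to the Galois object $(N,\Ad_{\alpha})$ for $(\hat{M},\hat{\Delta})$ and substitute the formula for $\Ww_{\Ad}$ from Theorem \ref{TheoCompCalcGalAd}---is exactly the paper's, and your bookkeeping remark about $\hat{J}_N$ versus $\hat{J}_O$ is harmless. The problem is your ``central identity'' $K(1\otimes\hat{J}x\hat{J})K^{*}=\hat{\pi}'(\hat{J}x\hat{J})\otimes 1$: it is false, and the heuristic offered for it (conjugation by the twisted copy of $\hat{W}$ ``transports'' an element of $1\otimes\hat{M}'$ to the first leg) is the opposite of what a multiplicative unitary does. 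Conjugating $1\otimes y$ by $\hat{W}$ produces the \emph{coproduct} of $y$, a genuinely two-legged operator, not a one-legged operator on the other side. Concretely, since $\hat{\pi}'\otimes\id$ is multiplicative and fixes $1\otimes B(L^2(M))$,
\begin{align*}
K(1\otimes\hat{J}x\hat{J})K^{*}
&=(\hat{\pi}'\otimes\id)\bigl((J\hat{J}\otimes J\hat{J})\hat{W}(1\otimes\hat{J}x\hat{J})\hat{W}^{*}(\hat{J}J\otimes\hat{J}J)\bigr)\\
&=(\hat{\pi}'\otimes\id)\bigl((1\otimes J\hat{J})\hat{\Delta}'(\hat{J}x\hat{J})(1\otimes\hat{J}J)\bigr),
\end{align*}
whose second leg is not scalar. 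The follow-up step then also fails: $\hat{\Vv}$ and $\hat{\Vv}^{*}$ do not ``cancel'' around $\hat{\pi}'(\hat{J}x\hat{J})\otimes 1$, because the first leg of $\hat{\Vv}=(J_N\otimes J_N)\Ww(J_N\otimes J)$ lies in $N'$, which does not commute with $\hat{\pi}'(\hat{M}')$; and \eqref{EqGalImpCross2} governs the \emph{second} leg of $\Ww$, so it supplies no such commutation.

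The two errors compensate in the sense that the composite $\hat{\Vv}\,K(1\otimes\hat{J}x\hat{J})K^{*}\,\hat{\Vv}^{*}=\hat{\pi}'(\hat{J}x\hat{J})\otimes 1$ is correct, but the collapse to a single leg happens at the $\hat{\Vv}$-stage, not the $K$-stage. This is what the paper's proof does: after the computation displayed above, it moves the conjugations inside $\hat{\pi}'\otimes\id$ using $\hat{\pi}'(\hat{R}'(z))=J_N\hat{\pi}'(z)^{*}J_N$, and then invokes $\Ww\,(\hat{\pi}'\otimes\id)(\hat{\Delta}'(y))\,\Ww^{*}=\hat{\pi}'(y)\otimes 1$ from \cite[Lemma 4.2.(iv)]{DeC11}---the statement that the Galois unitary implements the crossed-product isomorphism---to land on $\hat{\pi}'(\hat{J}x\hat{J})\otimes 1$. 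Your fallback of recasting everything in the co-linking groupoid picture via the hybrid pentagon equations could be made to work for this correct composite identity, but as written you would be attempting to prove a false intermediate statement.
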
 
\begin{proof} By Lemma \ref{LemExtCom}, we have \[\beta_{\hat{M}}^{\Ad}(x) = (\hat{J}_N\otimes J_N)\Ww_{\Ad}(1\otimes \hat{J}x\hat{J})\Ww_{\Ad}^*(\hat{J}_O\otimes J_N).\] We use now the formula for $\Ww_{\Ad}$ in Theorem \ref{TheoCompCalcGalAd}. First, note that \begin{eqnarray*}&& \hspace{-1cm}(J\hat{J}\otimes J\hat{J})\hat{W}(1\otimes \hat{J}x\hat{J})\hat{W}^*(\hat{J}J\otimes \hat{J}J) \\ && = (J\hat{J}\otimes J\hat{J})\hat{W}(J\otimes \hat{J})(1\otimes x)(J\otimes \hat{J})\hat{W}^*(\hat{J}J\otimes \hat{J}J) \\ &&= (\hat{J}\otimes J)\hat{W}^*(1\otimes x)\hat{W}(\hat{J}\otimes J)\\ && = (\hat{J}\otimes J)\hat{\Delta}(x)(\hat{J}\otimes J) \\ && = (1\otimes J\hat{J})\hat{\Delta}'(\hat{J}x\hat{J})(1\otimes \hat{J}J).\end{eqnarray*} 
Further, \begin{eqnarray*} &&\hspace{-0.7cm} \hat{\Vv} (1\otimes J\hat{J})(\hat{\pi}'\otimes\id)(\hat{\Delta}'(\hat{J}x\hat{J})(1\otimes \hat{J}J))\hat{\Vv}^* \\ && = (J_N\otimes J_N)\Ww(J_N\otimes \hat{J})(\hat{\pi}'\otimes\id)(\hat{\Delta}'(\hat{J}x\hat{J})(J_N\otimes \hat{J}))\Ww^*(J_N\otimes J_N) \\ && = (J_N\otimes J_N)\Ww(\hat{\pi}'\otimes\id)((J\otimes \hat{J})(\hat{\Delta}'(\hat{J}x\hat{J})(J\otimes \hat{J})))\Ww^*(J_N\otimes J_N) \\ &&= J_N\hat{\pi}'(J\hat{J}x\hat{J}J)J_N\otimes 1 \\ &&= \hat{\pi}'(\hat{J}x\hat{J})\otimes 1,
\end{eqnarray*}
where in the penultimate step we used \cite[Lemma 4.2.(iv)]{DeC11}. By Theorem \ref{TheoCompCalcGalAd} we then find the expression \eqref{EqExtAd}.
\end{proof}

Note that we can take $O = B(\overline{\Hsp})$ with $\overline{\Hsp}$ the conjugate Hilbert space and $R_N$ the transpose map, \[R_N(x) = \overline{x}^* = J_{\Hsp}x^*J_{\Hsp}^*\] for \[J_{\Hsp}: \Hsp \rightarrow \overline{\Hsp},\quad \xi\mapsto \overline{\xi}.\]  Since the GNS-spaces $L^2(N)$ and $L^2(O)$ are then respectively $\Hsp \otimes \overline{\Hsp}$ and $\overline{\Hsp}\otimes \Hsp$, we can identify the map $\hat{J}_N$ with \[\hat{J}_N = \nu^{i/4}(J_{\Hsp}\otimes J_{\overline{\Hsp}}): \Hsp \otimes \overline{\Hsp} \rightarrow \overline{\Hsp}\otimes \Hsp.\] Since $J_N = \Sigma \circ (J_{\Hsp}\otimes J_{\Hsp}^*)$, we can view $\beta_{\hat{M}}^{\Ad}$ as the $*$-homomorphism \[\hat{\beta}_{\hat{M}}(x) = (J_{\Hsp}\otimes J_{\Hsp}^*)\hat{\pi}'(\hat{J}x\hat{J})(J_{\Hsp}^*\otimes J_{\Hsp}) \in B(\overline{\Hsp}) \vNtimes B(\Hsp) = O\vNtimes N,\] where we see $\hat{\pi}'$ as a representation of $\hat{M}'$ on $\Hsp\otimes \overline{\Hsp} = L^2(N)$. 

Let us end by giving some more information concerning the invariant weights on a $I$-factorial Galois object $(N,\alpha)$. 

\begin{Theorem} Let $(N,\alpha)$ be a $I$-factorial right Galois object for $(M,\Delta)$, say $N = B(\Hsp)$ for a Hilbert space $\Hsp$. Let $h$ be the unique positive (unbounded, invertible) operator on $\Hsp$ such that the left invariant weight $\varphi_N$ for $\alpha$ is given by \[\varphi_N(x) = \Tr(h^{1/2}xh^{1/2}),\quad \forall x\in N^+.\] Then the left invariant weight $\varphi_{\Ad}$ for $(N,\Ad_{\alpha})$ is given by \[\varphi_{\Ad}(x) = \Tr(h^{-1/2}xh^{-1/2}),\qquad x\in N^+.\]  
\end{Theorem}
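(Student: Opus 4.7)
My plan is to identify the density operator $h_{\Ad}$ for which $\varphi_{\Ad}(x) = \Tr(h_{\Ad}^{1/2}xh_{\Ad}^{1/2})$ (such $h_{\Ad}$ being unique up to a positive scalar by the uniqueness of the left-invariant weight on the Galois object $(N,\Ad_\alpha)$ from Theorem \ref{TheoAdjGal}), and to show that $h_{\Ad}$ is a positive scalar multiple of $h^{-1}$; rescaling $\varphi_{\Ad}$ then yields the claimed formula.

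First, I would reduce the statement to a question about modular automorphism groups. On the type $I$ factor $N=B(\Hsp)$, every nsf weight is of the form $\Tr(k^{1/2}\cdot k^{1/2})$ for a unique positive operator $k$, with modular automorphism group $\Ad(k^{it})$; two such weights differ by a positive scalar iff their modular groups coincide. Thus it suffices to show $\sigma_t^{\varphi_{\Ad}}(x) = h^{-it}xh^{it}$ for all $x\in N$ and $t\in\R$. By continuity and spectral theory this forces $h_{\Ad}^{it} = \chi(t)h^{-it}$ for some continuous character $\chi:\R\to\Tt$, hence $h_{\Ad} = c\,h^{-1}$ for some $c>0$.

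Second, the equality $\sigma_t^{\varphi_{\Ad}}=\Ad(h^{-it})$ I would derive by combining the modular structure of the two coactions. Reinterpreting $N=Q^{12}$ in the co-linking quantum groupoid of $(N,\alpha)$, the identity $\Delta_Q\circ\tau_t^Q=(\sigma_t^{\varphi_Q}\otimes\sigma_{-t}^{\psi_Q})\circ\Delta_Q$ (the analog of \eqref{EqTau}) specializes, after substitution of $\sigma_t^{\varphi_N}=\Ad(h^{it})$, to a concrete identity involving $\Ww$, $h^{it}$, and modular data of $M$; the analogous identity for the Galois object $(N,\Ad_\alpha)$ of $\hat M$ involves $\sigma_t^{\varphi_{\Ad}}=\Ad(h_{\Ad}^{it})$ and the modular data $\hat\nabla^{it}$ of $\hat M$. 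Transferring the first identity into the second via the explicit formula $\Ww_{\Ad}=U\hat{\Vv}K$ of Theorem \ref{TheoCompCalcGalAd} together with $\Ad_\alpha(x)=\Ww^*(x\otimes 1)\Ww$ and $\alpha(x)=\Ww^*(1\otimes x)\Ww$, one extracts that $h^{it}h_{\Ad}^{it}$ is central in $N$, which is the required identification.

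An alternative and more direct route, which I expect to be cleaner, is to compute the GNS map $\Lambda_{\Ad}$ outright. Identifying $L^2(N)=\Hsp\otimes\overline{\Hsp}$ with the Hilbert-Schmidt operators on $\Hsp$, one has $\Lambda_N(x)=xh^{1/2}$. Using the defining relation $(\omega\otimes\id)(\Ww_{\Ad}^*)\Lambda_{\Ad}(x)=\Lambda_{\Ad}((\omega\otimes\id)\Ad_\alpha(x))$ together with the formula $\Ww_{\Ad}=U\hat{\Vv}K$ and the known action of $\Ww$ on $\Lambda_N$, one would arrive at $\Lambda_{\Ad}(x)=xh^{-1/2}$, from which $\varphi_{\Ad}(x^*x)=\|xh^{-1/2}\|^2=\Tr(h^{-1/2}x^*xh^{-1/2})$ is immediate.

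The main obstacle in either approach is the concrete computation with $\Ww_{\Ad}$: one must carefully track the interplay of the modular conjugations $J$, $\hat J$, $J_N$ and the operators $\hat W$, $\hat\pi'$, $U$, $\hat{\Vv}$ appearing in the formula for $\Ww_{\Ad}$, against the Hilbert-Schmidt picture of $L^2(N)$ and the action of $\Ww$ on $\Lambda_N$. The many modular ingredients suggest that, while conceptually transparent, the bookkeeping is the genuine difficulty.
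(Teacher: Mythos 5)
Your argument determines $\varphi_{\Ad}$ only up to a positive scalar, and that scalar is precisely the nontrivial content of the statement. The left invariant weight of a Galois object is not chosen up to normalization: it is canonically fixed by $\varphi_{\Ad}(x)1=(\id\otimes\hat{\varphi})\Ad_{\alpha}(x)$, with $\hat{\varphi}$ itself canonically normalized in terms of $\varphi$. So the step ``rescaling $\varphi_{\Ad}$ then yields the claimed formula'' is not available to you: once you know $\sigma_t^{\varphi_{\Ad}}=\Ad(h^{-it})$ you only know $\varphi_{\Ad}=c\,\Tr(h^{-1/2}\,\cdot\,h^{-1/2})$ for some $c>0$, and identifying $c=1$ is exactly where the work lies (compare the remark following the theorem: rescaling $\varphi\mapsto\lambda\varphi$ forces $\hat{\varphi}\mapsto\lambda^{-1}\hat{\varphi}$, so the precise inversion $h\mapsto h^{-1}$, rather than $h\mapsto\lambda h^{-1}$, is the point being made). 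Your second route has the same defect in hidden form: the relation $(\omega\otimes\id)(\Ww_{\Ad}^*)\Lambda_{\Ad}(x)=\Lambda_{\Ad}((\omega\otimes\id)\Ad_{\alpha}(x))$ is linear and homogeneous in $\Lambda_{\Ad}$, so knowing $\Ww_{\Ad}$ and $\Ad_{\alpha}$ explicitly can at best pin down $\Lambda_{\Ad}$ up to a scalar; it cannot distinguish $xh^{-1/2}$ from $x(ch)^{-1/2}$. It is also mildly circular, since $\Ww_{\Ad}$ is constructed out of $\Lambda_{\Ad}$ in the first place.

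The paper closes this gap by a mechanism you do not use: it takes the dual weight on the crossed product and its explicit form $\varphi_{B(L^2(N))}=\varphi_{N\rtimes M}\circ\pi_{\rtimes}^{-1}=\Tr(\nabla_N^{1/2}\,\cdot\,\nabla_N^{1/2})$ from \cite[Proposition 3.7]{DeC11}, writes $\nabla_N^{it}=h^{it}\otimes(\overline{h})^{-it}$ on $L^2(N)=\Hsp\otimes\overline{\Hsp}$, and uses the tensor splitting $B(L^2(N))\cong N\vNtimes N'$ with $N_{\alpha}\cong N'$ to read off that the left invariant weight of $(N',\widetilde{\alpha}^{\circ})$ is exactly $\Tr((\overline{h})^{-1/2}\,\cdot\,(\overline{h})^{-1/2})$; transporting by $C_{N'}$ then gives the result with the correct constant, and no computation with $\Ww_{\Ad}$ is needed at all. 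If you wish to salvage your approach, the modular-group argument must be supplemented by an input that sees the normalization --- for instance evaluating $(\id\otimes\hat{\varphi})\Ad_{\alpha}(x)$ on a suitable element, or precisely the dual-weight identity above. As it stands, the proposal proves a strictly weaker statement, and the concrete computations it relies on are only sketched.
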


\begin{proof} Let as before \[\varphi_{N\rtimes M}: (N\rtimes_{\alpha} M)^+ \rightarrow [0,+\infty],\quad x\mapsto \varphi_N \circ \alpha^{-1}\circ (\id\otimes \id\otimes \hat{\varphi}')\hat{\alpha}(x)\] be the dual weight of $\varphi_N$ \cite[Definition 3.1]{Vae01}, and denote again \[\varphi_{B(L^2(N))} = \varphi_{N\rtimes M}\circ \pi_{\rtimes}^{-1}.\] Then it follows from \cite[Proposition 3.7]{DeC11} that \[\varphi_{B(L^2(N))} = \Tr(\nabla_N^{1/2}\,\cdot\,\nabla_N^{1/2}).\] With $h$ as in the statement of the theorem, we have however \[\nabla_N^{it} = h^{it}\otimes \overline{h^{it}}  = h^{it}\otimes(\overline{h})^{-it},\] where we write $\overline{x}\overline{\xi} = \overline{x\xi}$ for $x\in B(\Hsp)$ and $\overline{\xi}\in \overline{\Hsp}$ the conjugate of $\xi\in \Hsp$. Hence on $B(\Hsp)\vNtimes B(\overline{\Hsp}) \cong N \vNtimes N' \cong B(L^2(N))$, the above weight can be expressed as \[\varphi_{B(L^2(N))} = \Tr(h^{1/2}\,\cdot\,h^{1/2}) \otimes \Tr((\overline{h})^{-1/2}\,\cdot\,(\overline{h})^{-1/2}).\]

Now by the proof of Theorem \ref{TheoAdjGal} we have that \[\Ad_{\alpha}(x) = (J_N\otimes \hat{J})\widetilde{\alpha}^{\circ}(J_NxJ_N)(J_N\otimes \hat{J}),\] where \[\widetilde{\alpha}^{\circ}(x) = (\pi_{\rtimes}\otimes \id)\alpha^{\circ}(\pi_{\rtimes}^{-1}(x)),\qquad x\in N'.\] As $\alpha^{\circ}$ is the restriction of $\hat{\alpha}$ to the factor $ \alpha(N)'\cap N\rtimes M \cong N'\cong B(\overline{\Hsp})$ which splits of, it  follows immediately from the above discussion that the left invariant weight for $\widetilde{\alpha}^{\circ}$ must be \[x \mapsto \Tr((\overline{h})^{-1/2} x (\overline{h})^{-1/2}),\qquad x\in B(\overline{\Hsp}).\] From the above form for $\Ad_{\alpha}$, we then deduce that \[\varphi_{\Ad} = \Tr(h^{-1/2}\,\cdot\,h^{-1/2}).\] 
\end{proof}

Note that the inversion $h \rightarrow h^{-1}$ in the above theorem is not unexpected: if we rescale $\varphi \rightarrow \lambda\varphi$ for $\lambda>0$, then the dual weight gets rescaled in the inverse way, $\hat{\varphi} \rightarrow \lambda^{-1}\hat{\varphi}$.

\section{Example: Heisenberg double}

Let $(M,\Delta)$ be a quantum group von Neumann algebra, and consider the tensor product quantum group von Neumann algebra $\widetilde{M} = \hat{M}\vNtimes M$ with comultiplication \[\Delta_{\widetilde{M}}(x) =  \varsigma_{23}(\hat{\Delta}\otimes \Delta)(x).\] Then one obtains a right coaction of $\widetilde{M}$ on $\widetilde{N} := B(L^2(M))$ by \[\alpha_{\widetilde{N}}(x)  = \hat{V}_{12}V_{13} (x\otimes 1\otimes 1)V_{13}^*\hat{V}_{12}^*.\] Indeed, since $V\in M^{\wedge \prime}\vNtimes M$ and $\hat{V}\in M^{\prime}\vNtimes \hat{M}$, the map $\alpha_{\widetilde{N}}$ restricts to the coaction $\Delta$ of $M$ on $M$ and the coaction $\hat{\Delta}$ of $\hat{M}$ on $\hat{M}$. Since $M$ and $\hat{M}$ generate $B(L^2(M))$ (see for example \cite[Proposition 2.5]{VV03}), this is sufficient to conclude that $\alpha_{\widetilde{N}}$ is well-defined. This is a generalization of the \emph{Heisenberg algebra} for the Cartesian product of an abelian compact group with its Pontryagin dual. In general we call $(\widetilde{N},\alpha_{\widetilde{N}})$ the \emph{Heisenberg double} of $(M,\Delta)$. 

\begin{Prop} The coaction $(\widetilde{N},\alpha_{\widetilde{N}})$ is a $I$-factorial Galois object.
\end{Prop}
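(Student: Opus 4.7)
The plan is to invoke Corollary \ref{CorTypI}: as $\widetilde{N} = B(L^2(M))$ is manifestly a type $I$-factor, it suffices to verify that $\alpha_{\widetilde{N}}$ is ergodic and integrable, with $\widetilde{N}\rtimes\widetilde{M}$ a type $I$-factor.

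First I would establish ergodicity. The coaction $\alpha_{\widetilde{N}}$ is implemented by the unitary $G := \hat{V}_{12}V_{13}\in \widetilde{N}\vNtimes\widetilde{M}$, so $z\in \widetilde{N}^{\alpha_{\widetilde{N}}}$ is equivalent to $G$ commuting with $z\otimes 1\otimes 1$. Slicing the second leg by $\omega_{\hat{M}}\in \hat{M}_*$ and the third leg by $\omega_M\in M_*$, this reduces to the condition that $z$ commutes with every product $(\id\otimes\omega_{\hat{M}})(\hat{V})\cdot (\id\otimes\omega_M)(V)$. Since $\hat{V}\in M'\vNtimes\hat{M}$ and $V\in \hat{M}'\vNtimes M$, these slices are $\sigma$-weakly dense in $M'$ and $\hat{M}'$ respectively, so $z\in (M'\vee\hat{M}')' = M\cap\hat{M}$. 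The hypothesis that $M\vee\hat{M} = B(L^2(M))$ (invoked to justify the coaction is well-defined) gives $M'\cap\hat{M}' = \C$, while \eqref{EqRhatJ} and its dual analogue yield $\hat{J}M\hat{J} = M$ and $J\hat{M}J = \hat{M}$. Combined with the standard identities $JMJ = M'$ and $\hat{J}\hat{M}\hat{J} = \hat{M}'$, conjugation by the (linear) unitary $J\hat{J}$ maps $M\cap\hat{M}$ into $M'\cap\hat{M}' = \C$, whence $M\cap\hat{M} = \C$.

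For integrability, the natural invariant weight $\varphi_{\widetilde{N}}$ on $\widetilde{N}$ is the canonical trace on $B(L^2(M))$ suitably twisted by the modular operator; invariance is verified by first applying $\varphi$ to the third leg of $\alpha_{\widetilde{N}}(z) = \hat{V}_{12}V_{13}(z\otimes 1\otimes 1)V_{13}^*\hat{V}_{12}^*$ (exploiting that $V$ implements $\Delta$ together with the left-invariance of $\varphi$), and then $\hat\varphi$ to the second leg (using $\hat{V}$ and the left-invariance of $\hat\varphi$ with respect to $\hat\Delta$). For the crossed product, $\widetilde{N}\rtimes\widetilde{M} = \alpha_{\widetilde{N}}(\widetilde{N})\vee(1\otimes\widetilde{M}^{\wedge\prime})$; conjugation by $G^*$ converts it into $(\widetilde{N}\otimes 1\otimes 1)\vee G^*(1\otimes\widetilde{M}^{\wedge\prime})G$, and with $\widetilde{M}^{\wedge\prime} = M'\vNtimes\hat{M}'$ and the explicit factorisation of $G$, a direct computation identifies this with a tensor product of type $I$-factors.

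The principal technical obstacle is the explicit construction of $\varphi_{\widetilde{N}}$ and the rigorous check of its invariance; this asks one to track carefully how the slices of $V$ and $\hat{V}$ interact with the modular data of $\varphi$ and $\hat\varphi$ when broken up leg by leg in $\alpha_{\widetilde{N}}$.
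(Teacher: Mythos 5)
Your overall strategy coincides with the paper's: reduce to Corollary \ref{CorTypI} and verify ergodicity, integrability, and type $I$-ness of the crossed product. Your ergodicity argument is a correct minor variant: the paper concludes directly from the $\sigma$-weak density of the linear span of $M'M^{\wedge\prime}$ in $B(L^2(M))$ that a coinvariant element is scalar, whereas you pass through $M\cap\hat{M}=\C$, which you correctly deduce by conjugating with $J\hat{J}$. The two remaining steps, however, are only asserted in your proposal, and that is where the real content lies.

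For integrability, producing a candidate invariant weight and checking its invariance is not what is required: one must show that the operator-valued weight $(\id\otimes\hat{\varphi}\otimes\varphi)\alpha_{\widetilde{N}}$ is semifinite, i.e.\ exhibit a $\sigma$-weakly dense set of integrable elements. The paper does this in one line: for $x\in\mathscr{N}_{\varphi}$ and $y\in\mathscr{N}_{\hat{\varphi}}$ the element $(xy)^*(xy)$ is integrable with value $\varphi(x^*x)\hat{\varphi}(y^*y)1$, and such elements span a dense subspace precisely because $(M\hat{M})''=B(L^2(M))$; your sketch never addresses this density/semifiniteness point. For the crossed product, a ``direct computation'' after conjugating by $G^*$ will not by itself identify the algebra as a type $I$-factor: the resulting generators (e.g.\ $V_{13}^*\Delta'(x)_{12}V_{13}$ for $x\in M'$) have legs entangled across all three tensor factors, and to disentangle them one must repeatedly invoke the Heisenberg commutation relations --- the $\sigma$-weak density in $B(L^2(M))$ of products such as $MM^{\wedge\prime}$ and $\hat{M}M'$ --- together with the density of $(\hat{M}\otimes 1)\hat{\Delta}(\hat{M})$ in $\hat{M}\vNtimes\hat{M}$, in order to complete the partial copies of $M$, $\hat{M}$, $M'$, $M^{\wedge\prime}$ into full type $I$ factors leg by leg. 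This is exactly what the paper's chain of conjugations by $\Sigma_{23}\Sigma_{13}W_{13}$ and then $\Sigma_{23}\hat{W}_{23}$, interleaved with these density arguments, accomplishes. So the principal technical obstacle is not the construction of $\varphi_{\widetilde{N}}$ (which the paper only determines afterwards, in the following proposition) but the crossed-product computation, and your proposal does not yet contain the ideas needed to carry it out.
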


\begin{proof} The $\sigma$-weak closure of the first leg of $\hat{V}_{12}V_{13}$ contains $M' M^{\wedge \prime} = J\hat{J}M\hat{M}\hat{J}J$, and is hence $\sigma$-weakly dense in $B(L^2(M))$. As any coinvariant element in $B(L^2(M))$ commutes with the first leg of $\hat{V}_{12}V_{13}$, the coaction $\alpha_{\widetilde{N}}$ is ergodic.

We deduce easily that $\alpha_{\widetilde{N}}$ is integrable, as all elements of the form $y^*x^*xy$ for $x\in \mathscr{N}_{\varphi}$ and $y \in \mathscr{N}_{\hat{\varphi}}$ are integrable. 

Now the crossed product by $\alpha_{\widetilde{N}}$ is generated by $\Delta(M)_{13}, 1\otimes 1\otimes M^{\wedge \prime}, \hat{\Delta}(\hat{M})_{12}$ and $1\otimes M^{\prime}\otimes 1$. Applying $\Ad(\Sigma_{23}\Sigma_{13}W_{13})$, we obtain that it is isomorphic to the von Neumann algebra generated by \[(M\otimes 1\otimes 1)\cup (M^{\wedge\prime}\otimes 1\otimes 1)\cup (\hat{\Delta}\otimes \id)\hat{\Delta}(\hat{M}) \cup 1 \otimes 1\otimes M'.\] But $M M^{\wedge\prime} = \hat{J}M \hat{M}\hat{J}$ is $\sigma$-weakly dense in $B(L^2(M))$. In particular, we can throw in another copy of $\hat{M}\otimes 1\otimes 1$ in the first leg. Using coassociativity of $\hat{\Delta}$ and the fact that $(\hat{M}\otimes 1)\hat{\Delta}(\hat{M})$ is $\sigma$-weakly dense in $\hat{M}\vNtimes \hat{M}$, we obtain that the above von Neumann algebra is the same as the one generated by  \[B(L^2(M))\otimes 1\otimes 1 \cup 1\otimes \hat{\Delta}(\hat{M}) \cup 1 \otimes 1\otimes M'.\] Applying $\Ad(\Sigma_{23}\hat{W}_{23})$, we see that this becomes an isomorphic copy of the von Neumann algebra $B(L^2(M))\vNtimes B(L^2(M))$. In other words, the crossed product  $\widetilde{N}\rtimes \widetilde{M}$ is a type $I$-factor, which is sufficient to conclude that $(\widetilde{N},\alpha_{\widetilde{N}})$ is a Galois object by Corollary \ref{CorTypI}.
\end{proof}

\begin{Prop} The invariant weights on $\widetilde{N}$ are given by \[\varphi_{\widetilde{N}}(x) = \Tr(h^{1/2}xh^{1/2}),\qquad \psi_{\widetilde{N}}(x) = \Tr(k^{1/2}xk^{1/2}),\] where $h,k$ are the positive invertible operators such that \[h^{it} = \nu^{it^2/2}\nabla^{it}J\delta^{it}J,\quad k^{it} = P^{-it},\] with $P^{it}$ determined by  $P^{it}\Lambda(x) = \nu^{t/2}\Lambda(\tau_t(x))$ for $x\in \mathscr{N}_{\varphi}$. 
\end{Prop}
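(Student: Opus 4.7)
The plan is to invoke the uniqueness (up to positive scalars) of the left and right invariant nsf weights on a Galois object, reducing both formulas to a direct verification. Since $\widetilde{N} = B(L^2(M))$ is a type $I$ factor, every nsf weight on $\widetilde{N}$ has the form $\Tr(a^{1/2}\,\cdot\,a^{1/2})$ for a unique positive, invertible (possibly unbounded) operator $a$ on $L^2(M)$, so both assertions reduce to the identification of the correct operators $h$ and $k$.

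I would start with the right invariant weight, as the formula $k^{it} = P^{-it}$ is simpler. The modular automorphism group of $\psi_{\widetilde{N}}$ is $\Ad(P^{-it})$. Since $P^{it}$ implements the scaling group $\tau_t$ on $M$ (so $\Ad(P^{-it})$ acts as $\tau_{-t}$ there) and, by Pontryagin duality, implements the scaling group of $\hat{M}$ as well, this modular group has a transparent action on both $M$ and $\hat{M}$ inside $\widetilde{N}$. Using the explicit form $\alpha_{\widetilde{N}}(x) = \hat{V}_{12} V_{13}(x \otimes 1 \otimes 1) V_{13}^* \hat{V}_{12}^*$, the trace property of $\Tr$, and the known intertwining relations of $P^{-it}$ with $\hat{V}$ and $V$ (which encode the scaling-group equivariance of the multiplicative unitaries via \eqref{EqTau} and its dual), one verifies $\psi_{\widetilde{N}}((\id\otimes\omega)\alpha_{\widetilde{N}}(x)) = \omega(1)\psi_{\widetilde{N}}(x)$ for all states $\omega \in \widetilde{M}_*$ and $x \in \widetilde{N}^+$. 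Uniqueness then pins $\psi_{\widetilde{N}}$ down up to a positive scalar.

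For the left invariant weight, I would combine the determined $\psi_{\widetilde{N}}$ with the Connes cocycle identity \eqref{EqConDerPhiPsi}: $(D\psi_{\widetilde{N}} : D\varphi_{\widetilde{N}})_t = \nu^{it^2/2}\delta_{\widetilde{N}}^{it}$. By \eqref{EqDelGrouplike}, $\alpha_{\widetilde{N}}(\delta_{\widetilde{N}}^{it}) = \delta_{\widetilde{N}}^{it}\otimes\delta_{\widetilde{M}}^{it}$ with $\delta_{\widetilde{M}}^{it} = \hat{\delta}^{it}\otimes\delta^{it}$, and using the explicit form of $\alpha_{\widetilde{N}}$ this grouplike condition is enough to solve for $\delta_{\widetilde{N}}^{it}$ as an operator affiliated with $\widetilde{N}$. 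The equation $k^{it}h^{-it} = \nu^{it^2/2}\delta_{\widetilde{N}}^{it}$ with $k^{it} = P^{-it}$ then yields $h^{it}$; after invoking the standard Kustermans--Vaes identity connecting $\hat{\nabla}^{it}$ to $\nabla^{it}$, $P^{it}$, and the $\delta$-data on $L^2(M)$, one recovers $h^{it} = \nu^{it^2/2}\nabla^{it} J\delta^{it} J$.

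The main obstacle will be the bookkeeping of the commutation relations among $\nabla, \hat{\nabla}, P, J, \hat{J}, \delta$ on $L^2(M)$: in particular, checking that $\Ad(h^{it})$ restricts to $\sigma_t^\varphi$ on $M$ and to $\sigma_t^{\hat\varphi}$ on $\hat{M}$ (using that $\nabla^{it}$ implements $\sigma_t^\varphi$ on $M$ and that $J\delta^{it}J \in M'$), and that $\Ad(P^{-it})$ implements the scaling groups consistently across the flip built into the coproduct $\Delta_{\widetilde{M}} = \varsigma_{23}(\hat{\Delta}\otimes\Delta)$.
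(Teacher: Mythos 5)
Your route is a legitimate mirror image of the paper's, but it inverts the direction of the argument. The paper takes the \emph{left} invariant weight $\varphi_{\widetilde{N}} = \Tr(h^{1/2}\,\cdot\,h^{1/2})$ directly from \cite[Propositions 2.8 and 2.9]{VV03} (where exactly this weight on the Heisenberg double is identified), and then obtains $\psi_{\widetilde{N}}$ by solving the grouplike condition $\alpha_{\widetilde{N}}(\delta_{\widetilde{N}}^{it}) = \delta_{\widetilde{N}}^{it}\otimes\hat{\delta}^{it}\otimes\delta^{it}$ explicitly as $\delta_{\widetilde{N}}^{it} = \nu^{it^2/2}\hat{\delta}^{it}\delta^{it}$ and invoking the identity $\psi_{\widetilde{N}} = \varphi_{\widetilde{N}}(\delta_{\widetilde{N}}^{1/2}\,\cdot\,\delta_{\widetilde{N}}^{1/2})$ from \cite[Theorem 4.19]{DeC11}, with the remarks preceding \cite[Proposition 2.10]{VV03} supplying the operator identity that turns $h^{it}$ times $\nu^{it^2/2}\hat{\delta}^{it}\delta^{it}$ into $P^{-it}$. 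You instead propose to establish $\psi_{\widetilde{N}}$ first by a direct verification of right invariance and then recover $\varphi_{\widetilde{N}}$ from the Connes cocycle \eqref{EqConDerPhiPsi}. Both directions use the same two ingredients (uniqueness of invariant weights and the cocycle/modular-element relation), so the strategies are structurally equivalent; what the paper's ordering buys is that the one genuinely hard analytic step is already in the literature.

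Two concrete soft spots in your version. First, the sentence ``one verifies $\psi_{\widetilde{N}}((\id\otimes\omega)\alpha_{\widetilde{N}}(x)) = \omega(1)\psi_{\widetilde{N}}(x)$'' is where all the work lives: checking invariance of a weight of the form $\Tr(k^{1/2}\,\cdot\,k^{1/2})$ under a coaction implemented by $\hat{V}_{12}V_{13}$ is not a formal consequence of knowing its modular group and the scaling-group equivariance of $V,\hat{V}$; it is precisely the kind of dual-weight/modular-theory computation that \cite{VV03} carries out for the left weight, and you would have to reproduce its mirror image. Second, your derivation of $h$ from $k$ via \eqref{EqConDerPhiPsi} and \eqref{EqDelGrouplike} only determines $h$ up to a positive scalar, since the grouplike condition fixes $\delta_{\widetilde{N}}$ only up to a positive scalar; but $\varphi_{\widetilde{N}}$, unlike $\psi_{\widetilde{N}}$, carries a canonical normalization ($\varphi_{\widetilde{N}} = (\id\otimes\varphi_{\widetilde{M}})\alpha_{\widetilde{N}}$ read as a scalar), so the exact formula $h^{it} = \nu^{it^2/2}\nabla^{it}J\delta^{it}J$ asserted in the proposition requires an additional normalization check that your argument does not supply. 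Neither issue is fatal, but both would need to be filled in for a complete proof.
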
 

\begin{proof} The formula for $\varphi_{\widetilde{N}}$ follows from \cite[Proposition 2.8 and Proposition 2.9]{VV03}. By \cite[Theorem 4.19]{DeC11}, $\psi_{\widetilde{N}} = \varphi_{\widetilde{N}}(\delta_{\widetilde{N}}^{1/2}\,\cdot\,\delta_{\widetilde{N}}^{1/2})$ with the positive operator $\delta_{\widetilde{N}}$ determined up to a positive scalar by the fact that \[\alpha_{\widetilde{N}}(\delta_{\widetilde{N}}^{it}) = \delta_{\widetilde{N}}^{it}\otimes \hat{\delta}^{it}\otimes \delta^{it}.\] It follows that we can take \[\delta_{\widetilde{N}}^{it} = \nu^{it^2/2} \hat{\delta}^{it}\delta^{it},\] and the form for $\psi_{\widetilde{N}}$ now follows from the remarks above \cite[Proposition 2.10]{VV03}.
\end{proof} 

Let us now compute the Galois unitary associated to $(\widetilde{N},\alpha_{\widetilde{N}})$. In fact, following the discussion after \cite[Proposition 2.9]{VV03}, we may realize $\Lambda_{\widetilde{N}}$ as the unique GNS-map having the linear span of $\mathscr{N}_{\hat{\varphi}}\mathscr{N}_{\varphi}$ as its core and on which \[\Lambda_{\widetilde{N}}(xy) = \hat{\Lambda}(x)\otimes \Lambda(y).\] The corresponding GNS-representation of $B(L^2(M))$ is by \[x \rightarrow V(x\otimes 1)V^*,\] which identifies $B(L^2(M))$ with $M\ltimes_{\Delta}M$. Taking this presentation, we can represent the Galois unitary $\widetilde{\Ww}$ of $(\widetilde{N},\alpha_{\widetilde{N}})$ as a unitary operator on $L^2(M)^{\otimes 4}$. 

\begin{Prop} The Galois unitary of $(\widetilde{N},\alpha_{\widetilde{N}})$ is given by 
\[\widetilde{\Ww} = W_{14}W_{24} W_{31}^*.\]
\end{Prop}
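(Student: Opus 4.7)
The plan is to verify that $\widetilde{\Ww} := W_{14}W_{24}W_{31}^*$ satisfies the defining implementation property $\widetilde{\Ww}^*(1\otimes \pi_{\widetilde{N}}(z))\widetilde{\Ww} = (\pi_{\widetilde{N}}\otimes \id)\alpha_{\widetilde{N}}(z)$ of the Galois unitary on the generators $z\in M\cup\hat M$ of $\widetilde{N} = B(L^2(M))$. I first set up: since $V\in \hat M'\vNtimes M$, the GNS-representation $\pi_{\widetilde{N}}(z) = V(z\otimes 1)V^*$ on $L^2(M)^{\otimes 2}$ satisfies $\pi_{\widetilde{N}}(\hat x) = \hat x_1$ for $\hat x\in \hat M$ and $\pi_{\widetilde{N}}(y) = \Delta(y)_{12}$ for $y\in M$. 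Similarly, using $\hat V\in M'\vNtimes \hat M$ and $V\in \hat M'\vNtimes M$, the coaction splits on generators as $\alpha_{\widetilde{N}}(\hat x) = \hat\Delta(\hat x)_{12}$ and $\alpha_{\widetilde{N}}(y) = \Delta(y)_{13}$. In the 4-leg picture with legs $1,2$ carrying $L^2(\widetilde{N})$ in GNS-form and legs $3,4$ carrying $L^2(\widetilde M) = L^2(\hat M)\otimes L^2(M)$, the right-hand sides become $\hat\Delta(\hat x)_{13}$ and $(\id\otimes \Delta)\Delta(y)_{1,2,4}$ respectively, with coassociativity used for the latter.

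Next, I check the implementation. For $\hat x\in \hat M$, $1\otimes \pi_{\widetilde{N}}(\hat x) = \hat x_3$, and since $W_{14}$ and $W_{24}$ do not involve leg $3$, the conjugation $\widetilde\Ww^*\,\hat x_3\,\widetilde\Ww$ collapses to $W_{31}\,\hat x_3\,W_{31}^*$. Using the standard identity $W(\hat x\otimes 1)W^* = \Sigma\hat\Delta(\hat x)\Sigma$, this equals $\hat\Delta(\hat x)_{13}$, matching $(\pi_{\widetilde{N}}\otimes \id)\alpha_{\widetilde{N}}(\hat x)$. For $y\in M$, $1\otimes \pi_{\widetilde{N}}(y) = \Delta(y)_{34}$, and one computes $\widetilde\Ww^*\Delta(y)_{34}\widetilde\Ww$ using the pentagon rearrangement $W_{14}W_{24} = (\Delta\otimes\id)(W)$ placed on legs $(1,2,4)$, together with the commutation $W_{12}V_{23} = V_{23}W_{12}$ (valid as the shared leg is in $\hat M$ for $W$ and in $\hat M'$ for $V$) to move $\Delta(y)_{34} = V_{34}(y_3)V_{34}^*$ past the three $W$-factors. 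The result is $(\id\otimes\Delta)\Delta(y)_{1,2,4}$. Since $z\mapsto \widetilde\Ww^*(1\otimes \pi_{\widetilde{N}}(z))\widetilde\Ww$ is a $*$-homomorphism and $\widetilde N$ is generated by $\hat M\cup M$, this extends to all of $\widetilde N$.

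To finish, I check that $\widetilde\Ww$ has first leg in $\pi_{\widetilde{N}}(\widetilde N)$: slicing over legs $3,4$ produces elements of $\Delta(M)\cdot(\hat M\otimes 1)$ (using that $W_{14}W_{24}$ slices to $\Delta(M)$ on legs $1,2$ and $W_{31}^* = \hat W_{13}$ slices to $\hat M$ on leg $1$), whose weak closure is $\pi_{\widetilde{N}}(\widetilde N)$. The normalisation is then pinned down by verifying the explicit GNS-identity $(\omega\otimes\id)(\widetilde\Ww^*)\Lambda_{\widetilde{N}}(xy) = \Lambda_{\widetilde M}((\omega\otimes\id)\alpha_{\widetilde{N}}(xy))$ on a single core element $xy\in \mathscr{N}_{\hat\varphi}\mathscr{N}_\varphi$, using $\Lambda_{\widetilde{N}}(xy) = \hat\Lambda(x)\otimes \Lambda(y)$ together with the standard action of $W$ on such GNS-vectors. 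The main obstacle is the computation for $y\in M$: the conjugation $W_{31}W_{24}^*W_{14}^*\Delta(y)_{34}W_{14}W_{24}W_{31}^*$ requires simultaneous use of the pentagon for $W$ and of the $W$--$V$ commutations, since the operators $W_{14}, W_{24}$ share leg $4$ with $\Delta(y)_{34}$ on the non-commuting algebras $\hat M$ (from $W$) and $M$ (from $\Delta(y)$).
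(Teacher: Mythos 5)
Your proposal has two genuine gaps. The first is structural: the implementation identity $\widetilde{\Ww}^*(1\otimes \pi_{\widetilde{N}}(z))\widetilde{\Ww} = (\pi_{\widetilde{N}}\otimes\id)\alpha_{\widetilde{N}}(z)$ is \emph{not} the defining property of the Galois unitary; the defining property is the GNS formula $(\omega\otimes\id)(\widetilde{\Ww}^*)\Lambda_{\widetilde{N}}(z) = \Lambda_{\widetilde{M}}((\omega\otimes\id)\alpha_{\widetilde{N}}(z))$. If $\widetilde{\Ww}'$ is the true Galois unitary and your candidate also implements $\alpha_{\widetilde{N}}$ with first leg in $\pi_{\widetilde{N}}(\widetilde{N})$, then $u=\widetilde{\Ww}'\widetilde{\Ww}^*$ is only constrained to lie in $\pi_{\widetilde{N}}(\widetilde{N})\vNtimes \pi_{\widetilde{N}}(\widetilde{N})'$, which for $\widetilde{N}=B(L^2(M))$ is a huge algebra -- this is not a ``normalisation''. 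To kill $u$ by checking the GNS identity on a \emph{single} core element $xy$ you would need $\hat{\Lambda}(x)\otimes\Lambda(y)$ to be separating for $\pi_{\widetilde{N}}(\widetilde{N})'$, which fails for general $x,y$ and which you neither impose nor prove. Moreover, once you verify the GNS identity at all, verifying it on \emph{all} core vectors $(\xi\otimes\eta)\otimes\Lambda_{\widetilde{N}}(xy)$ is no extra work and determines $\widetilde{\Ww}^*$ outright: this is exactly the paper's proof, which uses $(\id\otimes\Lambda)\Delta(y)=W^*(\id\otimes\Lambda(y))$ twice, the analogous identity for $\hat{\Lambda}$, and $(\Delta\otimes\id)(W)=W_{13}W_{23}$ to get $\widetilde{\Ww}^*=\hat{W}_{13}^*W_{24}^*W_{14}^*$ directly. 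Your steps (a) and (b) are then redundant.

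The second gap is the step you yourself defer as ``the main obstacle'', and as described it fails. To move $V_{34}$ past $W_{14}W_{24}$ you invoke the commutation $W_{12}V_{23}=V_{23}W_{12}$, but that relation concerns the configuration where the \emph{second} leg of $W$ (in $\hat{M}$) meets the \emph{first} leg of $V$ (in $\hat{M}'$). In $V_{34}$ against $W_{14}$ the shared leg $4$ carries the second leg of $V$ (in $M$) against the second leg of $W$ (in $\hat{M}$); these do not commute, and the relevant relation there is the pentagonal $V_{13}W_{23}=W_{23}V_{12}V_{13}$, not a commutation. The identity you want is true, but the clean route avoids $V$ entirely: conjugate $\Delta(y)_{34}$ by $W_{14}$ and then $W_{24}$ using $W^*(1\otimes c)W=\Delta(c)$ to produce the four-fold coproduct of $y$ on legs $3,1,2,4$, and then use coassociativity together with $W\Delta(c)W^*=1\otimes c$ for the final conjugation by $W_{31}$ (whose leg at position $3$ is the first leg of $W$, lying in $M$, and hence does not commute with the leg-$3$ factor -- a point your sketch does not address) to collapse to $((\id\otimes\Delta)\Delta(y))_{124}$.
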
 
\begin{proof} Using KSGNS-maps of the form $\id\otimes \Lambda$ and identities of the form \[(\id\otimes \Lambda)(\Delta(y)) = W^*(\id\otimes \Lambda(y)),\qquad y\in \mathscr{N}_{\varphi},\] we compute for $x\in \mathscr{N}_{\hat{\varphi}}$, $y\in \mathscr{N}_{\varphi}$ and $\xi,\eta\in L^2(M)\otimes L^2(M)$ that \begin{eqnarray*} \hspace{-0.6cm} && \widetilde{\Ww}^*((\xi\otimes \eta) \otimes \Lambda_{\widetilde{N}}(xy))\\ && = ((\id\otimes \id\otimes \hat{\Lambda})(\hat{\Delta}(x)_{13})\otimes \id)(\id\otimes \id\otimes \Lambda_M)((\Delta\otimes \id)\Delta(y))(\xi\otimes \eta) \\ && = ((\id\otimes \id\otimes \hat{\Lambda})(\hat{\Delta}(x)_{13})\otimes \id) W_{23}^*W_{13}^* (\xi\otimes \eta \otimes \Lambda(y)) \\ &&= \hat{W}_{13}^*W_{24}^* W_{14}^* (\xi\otimes \eta\otimes \hat{\Lambda}(x)\otimes \Lambda(y)) \\ &&= \hat{W}_{13}^*W_{24}^* W_{14}^* ((\xi\otimes \eta)\otimes \Lambda_{\widetilde{N}}(xy)).
\end{eqnarray*} 
\end{proof}

To have an expression for $\widetilde{\Ww} \in \widetilde{N}\vNtimes B(L^2(\widetilde{M}),L^2(\widetilde{N}))$ with the first leg in its ordinary representation on $B(L^2(M))$, note that \begin{multline*} V_{12}^*W_{14}W_{24}W_{31}^*V_{12} = V_{12}^*(\Delta\otimes \id)(W)_{124}W_{31}^*V_{12}\\ = W_{14}V_{12}^*W_{31}^*V_{12} = W_{14}W_{31}^*,\end{multline*}  
so we obtain the natural expression \[\widetilde{\Ww} = W_{13}\hat{W}_{12} \in \widetilde{N}\vNtimes B(L^2(\widetilde{M}),L^2(\widetilde{N})).\]

Identifying the dual of $\widetilde{M}$ with $M\vNtimes \hat{M}$ in the natural way, we get the following corollary. 

\begin{Cor} The adjoint coaction $\Ad_{\alpha_{\widetilde{N}}}$ of $M\vNtimes \hat{M}$ on $B(L^2(M))$ is given by \[x \mapsto \hat{W}_{12}^*W_{13}^*(x\otimes 1\otimes 1)W_{13}\hat{W}_{12}.\] 
\end{Cor}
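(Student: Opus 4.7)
My plan is to simply unwind the definitions and apply the formula for the Galois unitary established in the preceding proposition.

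First I would recall that, by Definition \ref{DefAdj}, the adjoint coaction associated with any Galois object is implemented by the Galois unitary via
\[
\Ad_{\alpha_{\widetilde{N}}}(x) \;=\; \widetilde{\Ww}^{*}(x\otimes 1)\widetilde{\Ww}, \qquad x\in \widetilde{N}.
\]
The preceding proposition (in the version with the first leg in its ordinary representation on $B(L^{2}(M))$) gives
\[
\widetilde{\Ww} \;=\; W_{13}\hat{W}_{12} \;\in\; \widetilde{N}\vNtimes B(L^{2}(\widetilde{M}),L^{2}(\widetilde{N})).
\]
Substituting this into the previous display yields at once
\[
\Ad_{\alpha_{\widetilde{N}}}(x) \;=\; \hat{W}_{12}^{*}W_{13}^{*}(x\otimes 1\otimes 1)W_{13}\hat{W}_{12},
\]
which is precisely the claimed formula. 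So the content is purely bookkeeping once the Galois unitary is known.

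The remaining point is to justify the identification of the Pontryagin dual of $\widetilde{M} = \hat{M}\vNtimes M$ with the comultiplication $\Delta_{\widetilde{M}} = \varsigma_{23}(\hat{\Delta}\otimes \Delta)$ as the von Neumann algebra $M\vNtimes \hat{M}$, so that the right hand side is genuinely an element of $\widetilde{N}\vNtimes \widehat{\widetilde{M}}$. For this I would observe that the left multiplicative unitary of $\widetilde{M}$ is $W_{\widetilde{M}} = \hat{W}_{13}W_{24}$, whence slicing in the first two legs produces the algebra generated by $\hat{\lambda}(\omega_{1})\otimes \lambda(\omega_{2})$; this is exactly $\hat{\hat{M}}\vNtimes \hat{M} = M\vNtimes \hat{M}$. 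Under this identification, the second leg of $W_{13}$ sits in the $M$-factor of $\widehat{\widetilde{M}}$ and the second leg of $\hat{W}_{12}$ in the $\hat{M}$-factor, which matches the position-labelling of the formula. That the result lies in $\widetilde{N}\vNtimes \widehat{\widetilde{M}}$ (rather than merely in $\widetilde{N}\vNtimes B(L^{2}(\widetilde{M}))$) is automatic from the general statement \eqref{EqWInN} applied to $\widetilde{\Ww}$, and does not require verification by hand.

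There is no real obstacle here: the only step that could conceivably cause trouble is the leg-identification in the dual of the tensor-product quantum group, but this is standard and entirely parallel to the observation already made implicitly when writing $\alpha_{\widetilde{N}} = \mathrm{Ad}(\hat V_{12}V_{13})$ in the first place. The corollary is therefore a one-line consequence of Definition \ref{DefAdj} and the previous proposition.
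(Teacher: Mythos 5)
Your argument is correct and coincides with the paper's own (essentially one-line) justification: the corollary is obtained by substituting the expression $\widetilde{\Ww}=W_{13}\hat{W}_{12}$ from the preceding proposition into $\Ad_{\alpha_{\widetilde{N}}}(x)=\widetilde{\Ww}^*(x\otimes 1)\widetilde{\Ww}$ and identifying $\widehat{\widetilde{M}}$ with $M\vNtimes\hat{M}$ in the natural way, exactly as you do. Your only imprecision is cosmetic: the containment of the second leg in $\widehat{\widetilde{M}}$ follows from the lemma preceding Definition \ref{DefAdj} (which uses \eqref{EqGalImpCross2} in addition to \eqref{EqWInN}), not from \eqref{EqWInN} alone.
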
 

Using that \[(\id\otimes \varsigma)\alpha_{\widetilde{N}}(x) = V_{12}\hat{V}_{13}(x\otimes 1\otimes 1)\hat{V}_{13}^*V_{12}^*,\] together with the fact that \[\hat{W}_{12}^*W_{13}^* = (\hat{J}J\otimes 1\otimes 1)V_{12}\hat{V}_{13}(J\hat{J}\otimes 1\otimes 1),\] we deduce that \[\Ad_{\alpha_{\widetilde{N}}}(x) = (\Ad(\hat{J}J)\otimes \varsigma_{23})\alpha_{\widetilde{N}}(\Ad(J\hat{J})(x)),\] so that identifying $M\vNtimes \hat{M}\cong \hat{M}\vNtimes M$ via the flip map, $\Ad_{\alpha_{\widetilde{N}}}$ is just an isomorphic copy of $\alpha_{\widetilde{N}}$ itself. 

Let us consider now the reflection of $(\widetilde{M},\Delta_{\widetilde{M}})$ across $(\widetilde{N},\alpha_{\widetilde{N}})$. In fact, we have that $(M,\Delta)$ and $(\hat{M},\hat{\Delta})$ are quantum group von Neumann subalgebras of $(\widetilde{M},\Delta_{\widetilde{M}})$, and it is clear by Theorem \ref{TheoRedUn} that their reductions are respectively $M$ and $\hat{M}$ as sitting inside $B(L^2(M))$. Define \[\widetilde{O} = B(L^2(M)),\quad R_{\widetilde{N}}(x) = \hat{J}x^*\hat{J},\] so that \[\gamma_{\widetilde{O}}(x) = \Delta(x), \quad \gamma_{\widetilde{O}}(y) = \varsigma(J\otimes \hat{J})\hat{\Delta}(\hat{J}y\hat{J})(J\otimes \hat{J}),\quad x\in M,y\in M^{\wedge \prime}.\] It follows then by Theorem \ref{CorGenParts} that the reflected quantum group von Neumann algebra $(\widetilde{P},\Delta_{\widetilde{P}})$ is the $\sigma$-weak closure of $P\hat{P}$ where \[P = \Delta(M),\quad \hat{P} = (1\otimes J\hat{J})\hat{\Delta}(\hat{M})(1\otimes J\hat{J}),\] with the coproduct on $P$ and $\hat{P}$ inherited respectively from $(M,\Delta)$ and $(\hat{M},\hat{\Delta})$. 

We show that $(\widetilde{P},\Delta_{\widetilde{P}})$ is isomorphic to the \emph{Drinfel'd double} $(D(M),\Delta_{D(M)})$ of $(M,\Delta_M)$. We will follow here the approach of \cite[Section 8]{BV05}, and define the Drinfel'd double of $(M,\Delta_M)$ as the dual of the \emph{Drinfel'd codouble} $(\hat{D}(M),\Delta_{\hat{D}(M)})$, which consists of the von Neumann algebra $\hat{D}(M) = M\vNtimes \hat{M}$ endowed with the coproduct \[\Delta_{\hat{D}(M)} = (\id\otimes \varsigma\circ \Ad(W)\otimes \id)\circ (\Delta\otimes \hat{\Delta}).\] 

\begin{Theorem} There is an isomorphism $(D(M),\Delta_{D(M)}) \cong (\widetilde{P},\Delta_{\widetilde{P}})$. 
\end{Theorem}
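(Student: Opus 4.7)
The plan is to proceed via Pontryagin biduality, establishing the isomorphism by showing that the dual of $(\widetilde{P}, \Delta_{\widetilde{P}})$ coincides with the Drinfel'd codouble $(\hat{D}(M), \Delta_{\hat{D}(M)})$. First, one extracts a concrete form for the multiplicative unitary $W_{\widetilde{P}}$ of $\widetilde{P}$: it is one of the corner unitaries $W^{ij;k}$ of the co-linking quantum groupoid associated to the Heisenberg double Galois object $(\widetilde{N}, \alpha_{\widetilde{N}})$, and can be computed explicitly using the Galois unitary $\widetilde{\Ww} = W_{13}\hat{W}_{12}$ derived above, together with the conjugation identities \eqref{EqConjWpiece} and the hybrid pentagon equations \eqref{EqHybPentW} relating the four corners of the groupoid.

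Next, I would compute the multiplicative unitary $W_{\hat{D}(M)}$ of $\hat{D}(M) = M \vNtimes \hat{M}$ directly from the defining coproduct $\Delta_{\hat{D}(M)} = (\id \otimes \varsigma\circ\Ad(W) \otimes \id)\circ (\Delta \otimes \hat{\Delta})$, using the implementations $\Delta(\cdot) = W^*(1\otimes \cdot)W$ and $\hat{\Delta}(\cdot) = \hat{W}^*(1\otimes \cdot)\hat{W}$, to obtain $W_{\hat{D}(M)}$ as a specific product of legs of $W$ and $\hat{W}$ on $L^2(M)^{\otimes 4}$. With both multiplicative unitaries in hand, one exhibits a spatial intertwining unitary on $L^2(M) \otimes L^2(M)$, built from the flip $\Sigma$, conjugation by $1 \otimes J\hat{J}$, and conjugation by $W$, chosen so as to send $\Delta(M)$ to $M \otimes 1$ and $(1\otimes J\hat{J})\hat{\Delta}(\hat{M})(1\otimes J\hat{J})$ to $1 \otimes \hat{M}$. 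Once the unitaries are matched, both the underlying algebra and the coproduct on $\hat{\widetilde{P}}$ are automatically identified with those of $\hat{D}(M)$, and Pontryagin biduality concludes $\widetilde{P} \cong D(M)$.

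A more direct alternative would be to define a $*$-homomorphism $\Phi: D(M) \to \widetilde{P}$ sending the standard copies of $M$ and $\hat{M}$ inside $D(M)$ to $\Delta(M)$ and $(1 \otimes J\hat{J})\hat{\Delta}(\hat{M})(1 \otimes J\hat{J})$ respectively, then verify the Drinfel'd cross commutation relations in $\widetilde{P}$ using $W(\hat{y}\otimes 1)W^* = \varsigma\hat{\Delta}(\hat{y})$, and finally appeal to Theorem \ref{CorGenParts} together with the reductions already identified (namely $M$ and $\hat{M}$ themselves) to conclude that $\Phi$ is a bijection intertwining coproducts. The main obstacle, in either approach, is understanding how the nontrivial twist $\Ad(W)$ appearing in $\Delta_{\hat{D}(M)}$ arises naturally from the Heisenberg commutation between the embedded copies of $M$ and $\hat{M}$ inside $\widetilde{N} = B(L^2(M))$; this ultimately traces back to the pentagon equation for $W$, but the combinatorial bookkeeping of flips and $J\hat{J}$-conjugations across several tensor legs is where the delicate computation lies.
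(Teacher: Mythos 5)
Your plan, in either branch, stops short of the one ingredient that actually makes the paper's proof work: a concrete \emph{spatial} realization of $D(M)$ on $L^2(M)\otimes L^2(M)$. The paper quotes \cite[Theorem 5.3 and Proposition 8.1]{BV05}, which realizes $D(M)$ as the von Neumann algebra generated by $\hat{M}\otimes 1$ and $(J\otimes \hat{J})W^*(J\otimes \hat{J})W^*(1\otimes M)W(J\otimes \hat{J})W(J\otimes \hat{J})$, with $\Delta_{D(M)}$ restricting to $\hat\Delta$ and $\Delta$ on the two copies; conjugating by the single explicit unitary $(J\otimes \hat{J})W(J\otimes \hat{J})$ and using $\hat{W}=\Sigma W^*\Sigma$, $\hat\Delta^{\opp}\circ\hat{R}=(\hat{R}\otimes\hat{R})\circ\hat\Delta$ and $\hat{R}=\mathrm{Ad}(J)\circ *$ then lands exactly on $\Delta(M)$ and $(1\otimes J\hat{J})\hat\Delta(\hat{M})(1\otimes J\hat{J})$, i.e.\ on $\widetilde{P}$. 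Your ``more direct alternative'' --- define $\Phi$ on the two standard copies, check the Drinfel'd cross commutation relations, and conclude --- cannot work as stated: von Neumann algebras have no universal property with respect to generators and relations, so verifying relations inside $\widetilde{P}$ does not produce a normal $*$-homomorphism out of $D(M)$, let alone an isomorphism. You must exhibit the map as conjugation by a unitary (or otherwise as a composition of maps already known to be normal isomorphisms), and that is precisely what your plan omits.

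Your first branch (match the multiplicative unitaries of $\widetilde{P}$ and of the codouble $\hat{D}(M)=M\vNtimes\hat{M}$, then invoke biduality) is a legitimate alternative route in principle, and it is consistent with how the paper \emph{defines} $D(M)$ as the dual of the codouble. But as written it is only a programme: you do not compute $W_{\widetilde{P}}$ (which would require identifying the relevant corner $W^{11;1}$ of the co-linking quantum groupoid, a nontrivial extraction from $\widetilde{\Ww}=W_{13}\hat{W}_{12}$ via \eqref{EqConjWpiece} and \eqref{EqHybPentW}), you do not compute $W_{\hat{D}(M)}$, and you do not exhibit the intertwining unitary; you yourself flag this as ``where the delicate computation lies.'' The part of your plan that does survive --- using Theorem \ref{CorGenParts} and Theorem \ref{TheoRedUn} to see that $\widetilde{P}$ is generated by the two reduced reflections $\Delta(M)$ and $(1\otimes J\hat{J})\hat\Delta(\hat{M})(1\otimes J\hat{J})$, and that the coproducts are determined by their restrictions to these generating copies --- is indeed how the paper disposes of the comultiplication; but the algebra-level identification needs the [BV05] realization and the explicit conjugation, and without them the proof is not complete.
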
 

\begin{proof}
By \cite[Theorem 5.3 and Proposition 8.1]{BV05}, $D(M)$ can be realized as the von Neumann algebra \[D(M) = ((\hat{M}\otimes 1)\cup (J\otimes \hat{J})W^*(J\otimes \hat{J})W^*(1\otimes M)W(J\otimes \hat{J})W(J\otimes \hat{J}))''\] with the coproduct $\Delta_{D(M)}$ restricting to the coproducts on the copies of $(M,\Delta)$ and $(\hat{M},\hat{\Delta})$ inside. However, conjugating with $(J\otimes \hat{J})W(J\otimes \hat{J})$ shows that \begin{eqnarray*} D(M) &\cong & ((J\otimes \hat{J})W(J\hat{M}J\otimes 1)W^*(J\otimes \hat{J})\cup W^*(1\otimes M)W)''\\ &= & ((J\otimes \hat{J})\Sigma \hat{W}^*(1\otimes \hat{R}(\hat{M}))\hat{W}\Sigma(J\otimes \hat{J})\cup \Delta(M))'' \\ &=& ((J\otimes \hat{J})\hat{\Delta}^{\opp}(\hat{R}(\hat{M})) (J\otimes \hat{J})\cup \Delta(M))'' \\ &=& ((J\otimes \hat{J})(\hat{R}\otimes \hat{R})\hat{\Delta}(\hat{M}) (J\otimes \hat{J})\cup \Delta(M))''  \\ &=& ((1\otimes \hat{J}J)\hat{\Delta}(\hat{M})(1\otimes J\hat{J}) \cup \Delta(M))'' \\ &=& \widetilde{P}.
\end{eqnarray*} 
\end{proof}

\end{document}